\DeclareMathOperator{\dd}{d}
\DeclareMathOperator{\Interior}{Interior}
\DeclareMathOperator{\lct}{lct}
\DeclareMathOperator{\LCT}{LCT}
\DeclareMathOperator{\OO}{\mathcal{O}}
\DeclareMathOperator{\row}{row}
\DeclareMathOperator{\Spec}{Spec}
\newtheorem{Theorem}{Theorem}[section]
\newtheorem{Lemma}[Theorem]{Lemma}
\newtheorem{Proposition}[Theorem]{Proposition}
\newtheorem{Corollary}[Theorem]{Corollary}
\theoremstyle{definition}
\newtheorem{Definition}[Theorem]{Definition}
\newtheorem{Example}[Theorem]{Example}
\newtheorem{Remark}[Theorem]{Remark}
\newtheorem{Nada}[Theorem]{}
\numberwithin{equation}{Theorem}
\title[A procedure for computing the lct of a binomial ideal]{A procedure for computing the log canonical threshold of a binomial ideal}
\author{R. Blanco}
\address{Depto. Matem\'aticas,
Universidad de Castilla la Mancha. Spain.}
\email{MariaRocio.Blanco@uclm.es}
\author{S. Encinas}
\address{Depto. Matem\'atica Aplicada
and IMUVA, Instituto de Matem\'aticas.
Universidad de Valladolid.
%Avda. Salamanca 18.
%47014 Valladolid.
Spain.}
\email{sencinas@maf.uva.es}
\thanks{The authors were partially supported by MTM2012-35849 and MTM2015-68524-P}
\begin{document}

\begin{abstract}
We present a procedure for computing the log-canonical threshold of an arbitrary ideal generated by binomials and monomials.

The computation of the log canonical threshold is reduced to the problem of computing the minimum of a function, which is defined in terms of the generators of the ideal.
The minimum of this function is attained at some ray of a fan which only depends on the exponents of the monomials appearing in the generators of the ideal.
\end{abstract}

\maketitle

%\today. \currenttime

\section*{Introduction}

The multiplier ideal of an ideal  $\mathfrak{a}$ can be defined from the analytic or algebraic
point of view, see for example \cite{BlickleLazarsfeld2004} or \cite{Lazarsfeld2010}. In this paper we use the algebraic approach which involves resolution of singularities.

Fix a log-resolution  $\Pi: Y\rightarrow X$  of an ideal $\mathfrak{a}\subseteq\mathcal{O}_X$ over a field of characteristic zero, the \emph{multiplier ideal} of $\mathfrak{a}$ is $\mathcal{J}(\mathfrak{a})=\Pi_{*}\mathcal{O}_Y(K_{Y/X}-F)$ where $K_{Y/X}$ is the relative canonical divisor, and the divisor $F$ defines the total transform of the ideal, $\mathfrak{a}\cdot \mathcal{O}_{Y}=\mathcal{O}_{Y}(-F)$. This definition can be extended for any real number $t\geq 0$, then we can attach to the ideal $\mathfrak{a}$ a collection of \emph{multiplier ideals} $\mathcal{J}(t\cdot\mathfrak{a})=\mathcal{J}(\mathfrak{a}^t)$.
These ideals and the invariants arising from them have been widely studied.
See \cite{BlickleLazarsfeld2004} for an introduction and \cite{EinLazarsfeldSmithVarolin2004} for some applications.

One of the main invariants defined in terms of multiplier ideals is the log canonical threshold.
The log canonical threshold of the ideal $\mathfrak{a}$ is the smallest number $t>0$ making the ideal $\mathcal{J}(\mathfrak{a}^t)$ non trivial, and it is a measure of the singularities of the functions $f\in \mathfrak{a}$.

Computing multiplier ideals and log canonical thresholds from their definition is difficult in general. In the case of monomial ideals, Howald \cite{Howald2001} proved that it is possible to compute the multiplier ideal and the log canonical threshold using the Newton polyhedron associated to the ideal.
For binomial ideals, some cases are known. Shibuta and Takagi \cite{ShibutaTakagi2009} gave a procedure based in linear programming to compute the log canonical threshold of complete intersection binomial ideals and the defining ideals of monomial curves in 3-dimensional space.
See also \cite{Thompson2014} where a formula for multiplier ideals of monomial curves in 3-dimensional space is presented.
\medskip

We say that an ideal $\mathfrak{a}$ is a m-binomial ideal if $\mathfrak{a}$ may be generated by monomials and binomials (\ref{DefGBinId}).
We prove in Theorem~\ref{ThResolLCT} that a log-resolution of $\mathfrak{a}$ is non necessary to compute the log canonical threshold of $\mathfrak{a}$.
It is enough to achieve what we have called a \emph{pseudo-resolution} (\ref{DefPseudoRes}), where the total transforms of the generators of the ideal are products of monomials and binomial hyperbolic equations. The key point is that, after a pseudo-resolution, computation of the log canonical threshold reduces to two simple cases addressed in Proposition~\ref{PropLctAfin}.
Weak pseudo resolutions (\ref{Defweak}) are close to the toric desingularization morphisms defined in \cite{GonzalezTeissier2002}, see also \cite[Section 6]{Teissier2004}.

Our main results are Theorems~\ref{ThlctMinv} and \ref{ProplctRay}.
Theorem~\ref{ThlctMinv} shows that computation of the log canonical threshold of a m-binomial ideal is reduced to the problem of computing the minimum of a function $\LCT(M^{+},M^{-},u):\mathbb{R}^n_{\geq 0}\to\mathbb{R}$ (\ref{DefLctM}), defined in terms of the generators of the ideal.
Let $\mathfrak{a}\subset k[x_1,\ldots,x_n]$ be a m-binomial ideal (\ref{DefGBinId}) generated by binomias and monomials $f_1,\ldots,f_r$.
The entries of the vector $u\in k^n$ are the coefficients involved in $f_1,\ldots,f_r$ and the rows of $M^{+}$ and $M^{-}$ encode the monomials appearing in the generators.

\setcounter{section}{5}
\setcounter{Theorem}{9}
\begin{Theorem} %\label{ThlctMinv}
%Let $W=\mathbb{A}^n_k$ the $n$-dimensional affine space over a field $k$.
%Set $\Sigma$ to be the fan associated to $\tau$.
%Consider a set of vertices $\Xi=\{v_1,\ldots,v_n\}$ extending the rays of $\tau$.
%Let $\mathfrak{a}\subset k[X_1,\ldots,X_n]$ be a m-binomial ideal (\ref{DefGBinId}) generated by binomias and monomials $f_1,\ldots,f_r$. 

%Let $(M^{+},M^{-},u)$ be the triple associated to $f_1,\ldots,f_r$ (\ref{DefMatriz}) and
%$v^{c}=v_1^{c_1}\cdots v_n^{c_n}$ be the monomial defined by a divisor $\Delta$ in $W$.

The $lct$ of the ideal $\mathfrak{a}$ is the minimum of the function 
$\LCT(M^{+},M^{-},u)$:
\begin{equation*}
\lct(W,\mathfrak{a})=\min\left\{\LCT(M^{+},M^{-},u)(v) \mid v\in\mathbb{R}^n_{\geq 0}\right\}.
\end{equation*}
\end{Theorem}

The function $\LCT(M^{+},M^{-},u)$ is defined for every $v\in\mathbb{R}^{n}_{\geq 0}$ and it is not continuous in general, however Proposition~\ref{PropStrata} shows that there exist a rational polyhedral fan $\Gamma$ with support $\mathbb{R}^{n}_{\geq 0}$ and such that the function $\LCT(M^{+},M^{-},u)$ is continuous in the relative interior of every cone of $\Gamma$.
The minimum of $\LCT(M^{+},M^{-},u)$, and hence the log canonical threshold of $\mathfrak{a}$, is attained at some ray of the fan $\Gamma$.
% see Theorem~\ref{ProplctRay}.

\begin{Theorem} %\label{ProplctRay}
%Let $(M^{+},M^{-},u)$ be the triple associated to $\mathfrak{a}$ and let $\Gamma$ be the fan given by Proposition \ref{PropStrata}.
The log-canonical threshold $\lct(W,\mathfrak{a})$ is the minimum of the values $\LCT(M^{+},M^{-},u,c)(v)$ where $v$ is a ray of the fan $\Gamma$.
%If $\Gamma(1)=\{w_1,\ldots,w_t\}$ then
%\begin{equation*}
%\min\LCT(M^{+},M^{-},u,c)=\min\left\{ \LCT(M^{+},M^{-},u)(w_i) \mid i=1,\ldots,t\right\}.
%\end{equation*}
\end{Theorem}

See complete version of Theorems~\ref{ThlctMinv} and \ref{ProplctRay} in section~\ref{SecMain} for all technical details.
\setcounter{section}{0}
\setcounter{Theorem}{0}
\medskip

Our results generalize the procedure presented in
\cite{ShibutaTakagi2009} and allow us to calculate the log canonical threshold for arbitrary binomial ideals, including the non complete intersection case.
The procedure of Shibuta and Takagi relies on a linear programming problem
formulated only in terms of the exponents of the monomials appearing in
the generators of the ideal.
We illustrate in Example~\ref{ExDepCoef} that the log canonical threshold also depends on the coefficients of the binomials generating the ideal.
\medskip

We also show, see Corollary~\ref{CorPseudo}, a constructive procedure to obtain a pseudo-resolution of a 
m-binomial ideal. This procedure is based on Zeillinger's idea \cite{Zeillinger2006} for solving Hironaka's polyhedra game. Using this idea, all blowing-up centers are of codimension two. We use the same invariants as in \cite{Goward2005} where the author obtains a log-resolution for monomial ideals.
\medskip

We include in section~\ref{SecExample} several examples to illustrate our method. All computations were made with Singular \cite{Singular2012}.

The authors want to thank to the editors and the anonymous reviewers/referees for valuable suggestions and comments.

\section{Log-resolution}

In what follows $k$ is a field of characteristic zero.
We denote $W$ to be a smooth algebraic variety over $k$ and we recall definitions of log-resolution, multiplier ideal and log canonical threshold.

\begin{Definition} \label{logr}
Let $\mathfrak{a}\subset \mathcal{O}_W$ be a non zero sheaf of ideals on $W$.
A \emph{log-resolution} of $\mathfrak{a}$ is a proper birational morphism $\Pi:W'\to W$ such that
\begin{itemize}
\item $W'$ is smooth over $k$,

\item the total transform of the ideal $\mathfrak{a}\OO_{W'}=\OO_{W'}(-F)$ is an invertible sheaf associated to a normal crossing divisor $F$ in $W'$,

\item and $\mathrm{Exc}(\Pi)\cup\mathrm{Supp}(F)$ is a simple normal crossing divisor, where $\mathrm{Exc}(\Pi)$ is the exceptional locus of $\Pi$.
\end{itemize}
\end{Definition}

It is well known that if the field $k$ has characteristic zero, then log-resolution of ideals exists.
In fact there are procedures to obtain the morphism $\Pi$ as a constructive sequence of blowing ups
\begin{equation*}
(W,E)=(W^{(0)},E^{(0)})\leftarrow (W^{(1)},E^{(1)})\leftarrow \cdots  \leftarrow (W^{(N)},E^{(N)}).
\end{equation*}
See \cite{EncinasVillamayor2000}, \cite{BravoEncinasVillamayor2005} for details or \cite{Hauser2003} for an extended review.

\begin{Definition}
Let $t\geq 0$ be a real number.
The multiplier ideal of $\mathfrak{a}\subset\OO_{W}$ with exponent $t$ is:
\begin{equation*}
    \mathcal{J}(W,\mathfrak{a}^{t})=
    \Pi_{\ast}\OO_{W'}(K_{W'/W}-\lfloor tF \rfloor),
\end{equation*}
where $\Pi:W'\to W$ is a log-resolution of $\mathfrak{a}$ and
$\mathfrak{a}\OO_{W'}=\OO_{W'}(-F)$.
\medskip

If $\Delta$ is an effective Cartier divisor in $W$, we also define the multiplier ideal associated to the ideal and the divisor:
\begin{equation*}
    \mathcal{J}(W,\Delta,\mathfrak{a}^{t})=
    \Pi_{\ast}\OO_{W'}(K_{W'/W}-\Pi^{\ast}\Delta-\lfloor tF \rfloor),
\end{equation*}
where here $\Pi:W'\to W$ is a log-resolution of $\mathfrak{a}$ with the 
additional condition that the divisor $F+\Pi^{\ast}\Delta$ has normal 
crossings.
\end{Definition}

The definition of the multiplier ideal $\mathcal{J}(W,\Delta,\mathfrak{a}^{t})$ is independent of the choice of the log-resolution and it can be generalized when $\Delta$ is a $\mathbb{Q}$-divisor and even to the case $W$ non-smooth, see \cite{Lazarsfeld2004_2}.

Note that the set of multiplier ideals $\{\mathcal{J}(W,\Delta,\mathfrak{a}^{t}) \mid t\geq 0\}$ is a filtration, $\mathcal{J}(W,\Delta,\mathfrak{a}^{t_1})\supseteq\mathcal{J}(W,\Delta,\mathfrak{a}^{t_2})$ for $t_1\leq t_2$.
The first real number where the multiplier ideal is non trivial is called the log canonical threshold.
\begin{Definition} \label{Deflct}
The log canonical threshold of an ideal $\mathfrak{a}\subset\OO_W$ is
\begin{equation*}
\lct(W,\mathfrak{a})=\inf\{t\mid \mathcal{J}(W,\mathfrak{a}^{t})\neq\OO_{W}\}.
\end{equation*}
If $\Delta$ is a Cartier divisor in $W$ then 
\begin{equation*}
\lct(W,\Delta,\mathfrak{a})=\inf\{t\mid \mathcal{J}(W,\Delta,\mathfrak{a}^{t})\neq\OO_{W}\}.
\end{equation*}
The definition may be local at a point $\xi\in W$,
\begin{equation*}
\lct(W,\Delta,\mathfrak{a})_{\xi}=
\inf\{t\mid \mathcal{J}(W,\Delta,\mathfrak{a}^{t})\neq\OO_{W,\xi}\}.
\end{equation*}
\end{Definition}
The log canonical threshold is a rational number, see \cite{Lazarsfeld2004_2}.

\begin{Proposition} \label{PropLctCover}
Let $\Pi:W'\to W$ be a proper birational morphism where $W'$ and $W$ are smooth varieties over $k$.
Let $U'_1\cup\cdots\cup U'_r=W'$ be an open cover in $W'$.
Let $\Delta$ be a Cartier divisor in $W$. If $\Delta'=\Pi^{\ast}\Delta-K_{W'/W}$ then
\begin{equation*}
\lct(W,\Delta,\mathfrak{a})=\min\{\lct(U'_i,\Delta',\mathfrak{a}^{\ast}) \mid i=1,\ldots,r\}.
\end{equation*}
\end{Proposition}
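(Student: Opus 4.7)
The plan is to reduce the equality of log canonical thresholds to an equality of multiplier ideals, by computing both of them on a single common model, and then to exploit the elementary compatibility of multiplier ideals with open immersions.

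First I would choose a proper birational morphism $\Pi':W''\to W'$ with $W''$ smooth, which is simultaneously a log-resolution of $\mathfrak{a}\OO_{W'}$ and such that the composition $\Pi''=\Pi\circ\Pi':W''\to W$ is a log-resolution of $\mathfrak{a}$ with $F''+\Pi''^{*}\Delta$ a normal crossings divisor (where $\mathfrak{a}\OO_{W''}=\OO_{W''}(-F'')$). Such a $\Pi'$ exists in characteristic zero by the constructive log-resolution results cited after Definition~\ref{logr}. Using $\Pi''$ to compute $\mathcal{J}(W,\Delta,\mathfrak{a}^{t})$ and $\Pi'$ to compute $\mathcal{J}(W',\Delta',(\mathfrak{a}\OO_{W'})^{t})$, and applying the chain rule $K_{W''/W}=K_{W''/W'}+\Pi'^{*}K_{W'/W}$ together with $\Pi''^{*}\Delta=\Pi'^{*}\Pi^{*}\Delta$, the divisor inside the round-down agrees in both cases with $K_{W''/W'}-\Pi'^{*}\Delta'-\lfloor tF''\rfloor$. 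Since $\Pi''_{*}=\Pi_{*}\circ\Pi'_{*}$, this gives the key identity
\begin{equation*}
\mathcal{J}(W,\Delta,\mathfrak{a}^{t})=\Pi_{*}\,\mathcal{J}(W',\Delta',(\mathfrak{a}\OO_{W'})^{t}).
\end{equation*}

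Next I would verify the elementary fact that for any sheaf of ideals $\mathcal{I}\subseteq\OO_{W'}$ one has $\Pi_{*}\mathcal{I}=\OO_{W}$ if and only if $\mathcal{I}=\OO_{W'}$. One direction is immediate. For the other, $\Pi$ is proper and birational onto the normal variety $W$, so $\Pi_{*}\OO_{W'}=\OO_{W}$; if $\Pi_{*}\mathcal{I}=\OO_{W}$, then the global section $1\in\Gamma(W,\OO_{W})=\Gamma(W,\Pi_{*}\mathcal{I})=\Gamma(W',\mathcal{I})$ lies in $\mathcal{I}$, which forces $\mathcal{I}=\OO_{W'}$ because $\mathcal{I}$ is a sheaf of ideals. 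Combined with the previous step, this yields $\mathcal{J}(W,\Delta,\mathfrak{a}^{t})\neq\OO_{W}$ if and only if $\mathcal{J}(W',\Delta',(\mathfrak{a}\OO_{W'})^{t})\neq\OO_{W'}$.

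Finally, multiplier ideals are compatible with restriction to open subsets (restrict $\Pi'$ to $\Pi'^{-1}(U'_{i})\to U'_{i}$, which is again a log-resolution satisfying the normal-crossing hypothesis with respect to $\Delta'|_{U'_{i}}$), so $\mathcal{J}(W',\Delta',(\mathfrak{a}\OO_{W'})^{t})|_{U'_{i}}=\mathcal{J}(U'_{i},\Delta',\mathfrak{a}^{\ast})$. Since $\{U'_{i}\}$ covers $W'$, the sheaf $\mathcal{J}(W',\Delta',(\mathfrak{a}\OO_{W'})^{t})$ equals $\OO_{W'}$ if and only if each restriction equals $\OO_{U'_{i}}$. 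Combining these equivalences,
\begin{equation*}
\lct(W,\Delta,\mathfrak{a})=\inf\{t\mid \exists\,i:\ \mathcal{J}(U'_{i},\Delta',\mathfrak{a}^{\ast t})\neq\OO_{U'_{i}}\}=\min_{i}\lct(U'_{i},\Delta',\mathfrak{a}^{\ast}),
\end{equation*}
where swapping the infimum with the finite existential quantifier is legitimate because the family $\{\mathcal{J}(U'_{i},\Delta',\mathfrak{a}^{\ast t})\}_{t}$ is a decreasing filtration in $t$.

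The only delicate point is the first paragraph, namely invoking the existence of a common log-resolution refining $\Pi$ with the right normal-crossing properties for $\Delta$; everything else reduces to bookkeeping with the projection formula and the sheaf-theoretic triviality $\Pi_{*}\mathcal{I}=\OO_{W}\iff\mathcal{I}=\OO_{W'}$.
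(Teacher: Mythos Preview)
Your proof is correct and follows the same approach as the paper, which simply records the identity $\mathcal{J}(W,\Delta,\mathfrak{a}^{t})=\Pi_{\ast}\mathcal{J}(W',\Delta',(\mathfrak{a}\OO_{W'})^{t})$ and appeals to Definition~\ref{Deflct}. You have spelled out in full the details the paper leaves implicit: the construction of a common log-resolution, the chain rule for $K_{W''/W}$, the equivalence $\Pi_{\ast}\mathcal{I}=\OO_{W}\Leftrightarrow\mathcal{I}=\OO_{W'}$, and the passage to the open cover.
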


\begin{proof}
It follows from Definition~\ref{Deflct} and the fact that $\mathcal{J}(W,\Delta,\mathfrak{a})=\Pi_{\ast}\mathcal{J}(W',\Delta',\mathfrak{a}^{\ast})$.
\end{proof}

Note that the morphism $\Pi$ in Proposition \ref{PropLctCover} need not to be a log-resolution of $\mathfrak{a}$.  We will apply this result for $\Pi$ a pseudo resolution (\ref{DefPseudoRes}) of a m-binomial ideal.

\section{Toric varieties}

We remind here some basic notions about toric varieties, for more details see \cite{CoxLittleSchenck2011} \cite{Fulton1993} or \cite{Oda1988}. See also \cite{GonzalezTeissier2014} for generalization to non necessarily normal toric varieties.
\medskip

Fix $N\cong \mathbb{Z}^{n}$ a $n$-dimensional lattice and let $\mathcal{M}=Hom(N,\mathbb{Z})$ be its dual lattice. Denote by $N_\mathbb{R}$ the real vector space spanned by $N$. 

\begin{Definition}
A \emph{cone} $\sigma$ in $N_\mathbb{R}$ is a \emph{strongly convex rational polyhedral cone}, that is a set of non negative linear combinations of some vectors $v_{1},\ldots,v_{r}\in N$ such that it contains no nonzero $\mathbb{R}$-subspace of $N_\mathbb{R}$. 
\medskip

A \emph{face} of a cone $\sigma$ is a subset $\tau\subset\sigma$ such that there exists $w\in\mathcal{M}$ with
\begin{equation*}
\tau=\sigma\cap w^{\perp}=\{u\in\sigma \mid \langle w,u \rangle=0 \}.
\end{equation*}
One dimensional faces are called the rays of $\sigma$ and we denote $\sigma(1)$ the set of all rays of $\sigma$.
Note that if $\rho\in\sigma(1)$ then there exist a unique primitive vector $v_{\rho}\in N$ generating the semi-group $\rho\cap N$.

A \emph{fan} $\Sigma$ in $N$ is a set of strongly convex rational polyhedral cones $\sigma$ in $N_\mathbb{R}$ such that every face of a cone $\sigma\in\Sigma$ is also a cone in $\Sigma$ and the intersection of two cones in $\Sigma$ is a face of each one of them. 
\medskip

Given a fan $\Sigma$ in $N$, the \emph{support} of $\Sigma$, $|\Sigma|$, is the union of all the cones in $\Sigma$, that is, the set  $|\Sigma|=\cup_{\sigma\in\Sigma}\ \sigma\subset N_\mathbb{R}$.
We denote $\Sigma(1)$ the set of all rays in $\Sigma$. By abuse of notation we will also denote $\sigma(1)$, resp. $\Sigma(1)$, the set of primitive vectors $v_{\rho}$ with $\rho\in\sigma(1)$, resp. $\rho\in\Sigma(1)$. 
\medskip

If $\sigma$ is a cone in $N$, the \emph{dual cone} $\sigma^{\vee}\subset \mathcal{M}_{\mathbb{R}}$ is the set of vectors in $\mathcal{M}_{\mathbb{R}}$ that are nonnegative on $\sigma$, that is
\begin{equation*}
\sigma^{\vee}=\{w \in \mathcal{M}_{\mathbb{R}} | \langle w,u\rangle\geq 0\ \forall u\in \sigma \}.
\end{equation*}
\end{Definition}

\begin{Nada}
The semi-group $\sigma^{\vee}\cap \mathcal{M}=\{w \in \mathcal{M} | \langle w,u\rangle\geq 0\ \forall u\in \sigma \}$ is finitely generated.
Hence the algebra of the semi-group $k[\sigma^{\vee}\cap \mathcal{M}] $ is a finitely generated $k$-algebra that defines an affine toric variety $U_{\sigma}=Spec(k[\sigma^{\vee}\cap \mathcal{M}])$.
In fact every affine normal toric variety is of this form.
\medskip

Given a fan $\Sigma$ in $N$ we associate a (normal) toric variety $W_{\Sigma}$ obtained by gluing the affine toric varieties $\{W_{\sigma}\mid \sigma\in\Sigma\}$, see \cite{CoxLittleSchenck2011} for details.
\end{Nada}

\begin{Remark} \label{RemRegCone}
We say that a cone $\sigma$ is regular if the primitive vectors $\sigma(1)$ are part of a $\mathbb{Z}$-basis of the lattice $N$.
A fan $\Sigma$ is regular if every cone $\sigma\in\Sigma$ is regular.
It is known that the toric
variety $W_{\Sigma}$ is regular if and only if the associated fan $\Sigma$ is regular.

If $\sigma$ is a regular cone, there exist a $\mathbb{Z}$-basis of $N$, say $v_1,\ldots,v_n$ such that $\sigma(1)=\{v_1,\ldots,v_r\}$ for some $r\leq n$.
Let $w_1,\ldots,w_n$ be the dual basis in $\mathcal{M
}$, 
then the dual cone $\sigma^{\vee}$ is generated
by $w_1,\ldots,w_r,\pm w_{r+1},\ldots,\pm w_n$.
The associated affine toric variety is $U_{\sigma}=\Spec(k[x_1,\ldots,x_r,x_{r+1}^{\pm},\ldots,x_{n}^{\pm}])$.
\end{Remark}

\begin{Remark}
\label{RemComplex}
Let $\Sigma$ be a fan and $\Sigma(1)=\{v_1,\ldots,v_{m'}\}\subset N$ be the set of primitive vectors generating the rays of $\Sigma$.

Set $N'=\left\langle \Sigma(1)\right\rangle\cap N$.
Recall that the toric variety $W_{\Sigma}$ has a torus factor if and only if $N'_{\mathbb{R}}\neq N_{\mathbb{R}}$ (\cite[3.3.9]{CoxLittleSchenck2011}).
The quotient $N/N'$ is torsion-free and there exists a complement $N''\subset N$ such that $N=N'\oplus N''$.
\medskip

Note that every cone $\sigma\in\Sigma$ is generated by a subset of $\Sigma(1)=\{v_1,\ldots,v_{m'}\}$, $\sigma(1)=\{v_{i_1},\ldots,v_{i_r}\}$ for some indexes $i_1,\ldots,i_r$.

We associate to a fan $\Sigma$, and indeed to the toric variety $W_{\Sigma}$, 
a set of vertices $\Xi\subset N$.
\begin{equation*}
\Xi=\{v_1,\ldots,v_{m'},v_{m'+1},\ldots,v_m\},
\end{equation*}
where $\{v_{m'+1},\ldots,v_m\}$ is a $\mathbb{Z}$-basis of the complement $N''$.
\end{Remark}

\begin{Nada} \label{RemDescCone}
Note that for every cone $\sigma\in\Sigma$ there are vertices
$v_{i_1},\ldots,v_{i_n}\in\Xi$ such that:
\begin{itemize}
\item $v_{i_1},\ldots,v_{i_n}$ is a $\mathbb{Z}$-basis of $N$,

\item $\sigma(1)=\{v_{i_1},\ldots,v_{i_r}\}$ for some $r\leq n$.
\end{itemize}

Note that $\Xi$ is not unique since there are many choices for the vertices $v_{m'+1},\ldots,v_{m}$ as possible $\mathbb{Z}$-basis of $N''$.
These additional vertices are needed in order to define globally monomials and binomials in a smooth toric variety, see Definition \ref{DefGlobalMon}.
\end{Nada}

\begin{Remark} \label{RemTotalCRing}
Recall that the ring $k[v_1,\ldots,v_m]$ is the total coordinate ring of $W_{\Sigma}$. This ring has a grading given by the divisor class group $\mathrm{Cl}(W_{\Sigma})$. Two monomials $v^{a}$ and $v^{b}$ have the same degree if $a-b$ satisfies the same linear dependencies as the $v$'s:
If $\sum_{i=1}^{m}\lambda_i v_i=0$ then $\sum_{i=1}^{m}\lambda_i(a_i-b_i)=0$.
See \cite[\S 5.2]{CoxLittleSchenck2011}.
\end{Remark}

\begin{Definition} \label{DefSubdiv}
\cite[Def. 3.3.13]{CoxLittleSchenck2011}
Let $\Sigma$ be a fan in $N$. A fan $\Sigma'$ is a \emph{subdivision} of the fan $\Sigma$ if both fans have the same support and if every cone $\sigma'\in\Sigma'$ is contained in a cone $\sigma\in\Sigma$.

Let $\sigma$ be a regular cone generated by $v_1,\ldots,v_r$ and set $\Sigma$ the minimum fan containing $\sigma$. The cones in $\Sigma$ are $\sigma$ and all the faces of $\sigma$.
Fix a face $\tau$ of $\sigma$, and assume that $\tau$ is generated by $v_1,...,v_s$, $s\leq r$.
The \emph{star subdivision} of $\Sigma$ with center a face $\tau\in\Sigma$ is the fan $\Sigma'$ containing the cones $\sigma_1,\ldots,\sigma_s$ and all their faces, where each cone $\sigma_i$, $i=1,\ldots,s$ is generated by
\begin{equation} \label{eqConosExpl}
v_1,\ldots,v_{i-1},v_1+\cdots+v_s,v_{i+1},\ldots,v_s,
v_{s+1},\ldots,v_r.
\end{equation}

If $\Sigma$ is a fan and $\tau\in\Sigma$ is a cone, then the star subdivision of $\Sigma$ with center $\tau$ is the fan $\Sigma'$ such that:
\begin{itemize}
\item If $\sigma\in\Sigma$ and $\tau$ is not a face of $\sigma$ then $\sigma\in\Sigma'$.

\item If $\sigma\in\Sigma$ and $\tau$ is a face of $\sigma$ then $\sigma_i\in\Sigma'$ for $i=1,..s$, where $\sigma_1,\ldots,\sigma_s$ are cones as in (\ref{eqConosExpl}).
\end{itemize}
\end{Definition}

\begin{Nada}
Recall that any subdivision $\Sigma'$ of a fan $\Sigma$ defines a proper birational morphism $W_{\Sigma'}\to W_{\Sigma}$.
\medskip

If $\Sigma$ is a regular fan (i.e. $W_{\Sigma}$ is a regular toric variety) and $\Sigma'$ is a star subdivision of $\Sigma$, then $\Sigma'$ is also a regular fan.
\medskip

Let $\Sigma'$ be the star subdivision of $\Sigma$ with center $\tau$.
If $\Xi=\{v_1,\ldots,v_m\}$ is a set of vertices for $\Sigma$ then a set of vertices $\Xi'$ for $\Sigma'$ is obtained by adding one element.

If $\tau$ is generated by the vertices $v_{i_{1}},\ldots,v_{i_{r}}$ of $\Xi$ then
$\Xi'=\Xi\cup\{v_{i_{1}}+\cdots+v_{i_{r}}\}$.
\end{Nada}

\begin{Definition}
Let $W=W_{\Sigma}$ be a regular toric variety, defined by a regular fan $\Sigma$.
A \emph{combinatorial blowing up} with center a cone $\tau\in\Sigma$ is the morphism
\begin{equation*}
W'=W_{\Sigma'}\to W=W_{\Sigma}
\end{equation*}
defined by the star subdivision $\Sigma'$ of $\Sigma$ with center $\tau$ (\ref{DefSubdiv}).
\end{Definition}

\begin{Remark} \label{RemCombBlup}
Combinatorial blowing ups correspond to usual blowing ups with center defined by some variables.
\medskip

Let $\Pi:W_{\Sigma'}\to W_{\Sigma}$ be the combinatorial blow-up with center $\tau\in\Sigma$.

Fix a cone $\sigma\in\Sigma$ such that $\tau$ is a face of $\sigma$.
Set $v_1,\ldots,v_{r}\in N$ be generators of $\sigma$ such that $v_1,\ldots,v_s$, $s\leq r$ are generators of $\tau$. The open set $U_{\sigma}\subset W_{\Sigma}$ is, by Remark \ref{RemRegCone},
\begin{equation*}
U_{\sigma}\cong\Spec(k[x_1,\ldots,x_{r},x_{r+1}^{\pm},\ldots,x_n^{\pm}]).
\end{equation*}
The cone $\sigma$ is replaced in $\Sigma'$ by the $s$ cones $\sigma_1,\ldots,\sigma_s$, notation as in Definition \ref{DefSubdiv}.
The restriction of the combinatorial blow-up $\Pi$ to the open set $U_{\sigma}$,
\begin{equation*}
U_{\sigma_1}\cup\cdots\cup U_{\sigma_s}\to U_{\sigma},
\end{equation*}
is the (usual) blowing-up of $U_{\sigma}$ with center $(x_1,\ldots,x_s)$.

Extend the generators of $\sigma$ to $v_1,\ldots,v_n$ a $\mathbb{Z}$-basis of $N$ and denote $w_1,\ldots,w_n$ the dual basis.
The cone $\sigma^{\vee}$ is generated by $w_1,\ldots,w_r,\pm w_{r+1},\ldots,\pm w_n$.
It is easy to check that for $i=1,\ldots,s$ the cone $\sigma_i^{\vee}$ is generated by
\begin{equation*}
w_1-w_i,\ldots,w_{i-1}-w_i,w_i,w_{i+1}-w_i,\ldots,w_s-w_i,
w_{s+1},\ldots,w_r,\pm w_{r+1},\ldots,\pm w_n
\end{equation*}
and 
\begin{equation*}
U_{\sigma_i}\cong
\Spec\left( k\left[
\frac{x_1}{x_i},\ldots,\frac{x_{i-1}}{x_i},x_i,
\frac{x_{i+1}}{x_i},\ldots,\frac{x_s}{x_i},
x_{s+1},\ldots,x_r,x_{r+1}^{\pm},\ldots,x_n^{\pm}
\right]\right).
\end{equation*}
\end{Remark}

\begin{Definition} \label{DefGlobalMon}
Let $W=W_{\Sigma}$ be a smooth toric variety. Let $\Xi=\{v_1,\ldots,v_m\}$ be a set of vertices for $\Sigma$.

A \emph{monomial} in $W$ will be a monomial in the total coordinate ring (\ref{RemComplex}). We will denote by $v^a=v_1^{a_1}\cdots v_m^{a_m}$ where $a=(a_{1},\ldots,a_{m})\in\mathbb{Z}^m_{\geq 0}$.

If $\sigma\in\Sigma$ is the cone generated by $v_{i_1},\ldots,v_{i_r}$, recall that the corresponding open set in $W$ is
\begin{equation*}
U_{\sigma}\cong \Spec(k[x_1,\ldots,x_r,x_{r+1}^{\pm},\ldots,x_n^{\pm}])
\end{equation*}
where $v_{i_1},\ldots,v_{i_r},v_{i_{r+1}},\ldots,v_{i_n}$ is a $\mathbb{Z}$-basis of $N$ (\ref{RemDescCone}).
The monomial $v^{a}$ induces in the open set $U_{\sigma}$ the monomial $x_1^{a_{i_1}}\cdots x_n^{a_{i_n}}$.

It follows that the monomial $v^{a}$ defines a sheaf of ideals in $\OO_{W}$ which is locally monomial.

A \emph{binomial} in $W$ will be a homogeneous binomial in the total coordinate ring (\ref{RemTotalCRing}).
We will denote $v^a-uv^b$, where $u\in k^{\ast}$, $a,b\in\mathbb{Z}^m$, such that
\begin{itemize}
\item $v^a$ and $v^b$ are monomials, i.e. for every cone $\sigma$, with $\sigma(1)=\{v_{i_1},\ldots,v_{i_r}\}\subset\Xi$ then
\begin{equation*}
a_{i_j}\geq 0 \qquad\text{and}\qquad b_{i_j}\geq 0
\qquad\text{for}\qquad j=1,\ldots,r.
\end{equation*}

\item And if 
\begin{equation*}
\sum_{i=1}^{m}\lambda_i v_i=0 
\qquad \Longrightarrow \qquad
\sum_{i=1}^{m}\lambda_i(a_i-b_i)=0.
\end{equation*}
\end{itemize}

An irreducible binomial is a binomial as above of the form $v^{\beta^{+}}-uv^{\beta^{-}}$, where $\beta\in\mathbb{Z}^m$ and for $i=1,\ldots,m$
\begin{equation*}
\beta^{+}_i=\left\{
\begin{array}{lll}
\beta_i & \text{if} & \beta_i>0 \\ 
0 & \text{if} & \beta_i\leq 0
\end{array} 
 \right.,
\qquad
\beta^{-}_i=\left\{
\begin{array}{lll}
-\beta_i & \text{if} & \beta_i<0 \\ 
0 & \text{if} & \beta_i\geq 0
\end{array} 
 \right..
\end{equation*}
\end{Definition}

See \cite[\S 5.3]{CoxLittleSchenck2011} for more details for sheaves on toric varieties.
%Note that a binomial has to be homogeneous with respect to the grading of the total coordinate ring of the toric variety.
A binomial defines a hypersurface in $W_{\Sigma}$ such that at every affine open set $U_{\sigma}\subset W_{\Sigma}$ it is defined by a usual binomial.

\begin{Definition} \label{DefGBinId}
A \emph{monomial ideal} in $W=W_{\Sigma}$ is the ideal sheaf in $\OO_W$ generated by a finite set of monomials $v^{a_1},\ldots,v^{a_{\ell}}$. At every open subset $U_{\sigma}$, $\sigma\in\Sigma$ corresponds to an ideal generated by monomials in a polynomial ring.

A \emph{binomial ideal} in $W_{\Sigma}$ is the ideal sheaf generated by a finite set of binomials.

A \emph{m-binomial ideal} in $W_{\Sigma}$ is the ideal sheaf generated by a finite set of monomials and binomials.
\end{Definition}

\begin{Proposition} \label{PropIrredBin}
Every binomial $v^a-uv^b$ may be expressed as a product of a monomial and an irreducible binomial
\begin{equation*}
v^a-uv^b=v^{\alpha}(v^{\beta^{+}}-uv^{\beta^{-}}).
\end{equation*}
\end{Proposition}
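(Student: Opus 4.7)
The plan is to exhibit an explicit factorisation and then verify, case by case, that the factors satisfy the conditions of Definition~\ref{DefGlobalMon}. For each index $i=1,\ldots,m$ set
\begin{equation*}
\alpha_i=\min(a_i,b_i),\qquad \beta_i=a_i-b_i\in\mathbb{Z}.
\end{equation*}
Then by definition of $\beta^+,\beta^-$ one has $\alpha_i+\beta^+_i=a_i$ and $\alpha_i+\beta^-_i=b_i$ for every $i$, so formally
\begin{equation*}
v^{a}-uv^{b}=v^{\alpha}\bigl(v^{\beta^{+}}-uv^{\beta^{-}}\bigr).
\end{equation*}
It remains to check that $v^{\alpha}$ is a monomial in $W$ and that $v^{\beta^{+}}-uv^{\beta^{-}}$ is a binomial in $W$, both in the sense of Definition~\ref{DefGlobalMon}.

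First I would check the non-negativity conditions on rays. For any cone $\sigma\in\Sigma$ with $\sigma(1)=\{v_{i_1},\ldots,v_{i_r}\}$, the hypothesis that $v^{a}-uv^{b}$ is a binomial gives $a_{i_j}\geq 0$ and $b_{i_j}\geq 0$, hence $\alpha_{i_j}=\min(a_{i_j},b_{i_j})\geq 0$. This shows $v^{\alpha}$ is a monomial. For $v^{\beta^{+}}$ and $v^{\beta^{-}}$ the required non-negativity is automatic: $\beta^+_i\geq 0$ and $\beta^-_i\geq 0$ for every $i$ by their very definition.

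Next I would verify homogeneity of the residual binomial. The crucial observation is that $\beta^+_i-\beta^-_i=\beta_i=a_i-b_i$, so the two exponent vectors satisfy the same linear dependencies to check: whenever $\sum_{i=1}^m\lambda_iv_i=0$, the homogeneity of the original binomial gives $\sum_{i=1}^m\lambda_i(a_i-b_i)=0$, which is precisely $\sum_{i=1}^m\lambda_i(\beta^+_i-\beta^-_i)=0$. This is the homogeneity condition for $v^{\beta^+}-uv^{\beta^-}$.

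Finally, the binomial $v^{\beta^+}-uv^{\beta^-}$ is irreducible in the sense of Definition~\ref{DefGlobalMon} because the supports of $\beta^+$ and $\beta^-$ are disjoint by construction, i.e.\ $\beta^+_i\beta^-_i=0$ for every $i$, which is exactly the shape required of an irreducible binomial. No step here poses a real difficulty; the content of the proposition is simply to record the canonical choice $\alpha_i=\min(a_i,b_i)$ and to observe that the non-negativity and homogeneity conditions transfer correctly.
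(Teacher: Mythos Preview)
Your proof is correct and follows exactly the same approach as the paper: the paper's proof also sets $\alpha_i=\min\{a_i,b_i\}$ and $\beta=a-b$, then simply remarks that it is easy to check $v^{\beta^+}-uv^{\beta^-}$ is a binomial. You have merely spelled out the verifications that the paper leaves implicit.
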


\begin{proof}
Set $\alpha_i=\min\{a_i,b_i\}$, for $i=1,\ldots,m$, and $\beta=a-b$. We factor out as much as possible in $v^{\alpha}$, then some components of $\beta$ will be positive and some will be negative, getting an irreducible binomial $v^{\beta^{+}}-uv^{\beta^{-}}$ like in Definition \ref{DefGlobalMon},
note that $a-b=\beta^{+}-\beta^{-}$. 

%Since $v^a-uv^b$ is a binomial then it is easy to check that $v^{\beta^{+}}-uv^{\beta^{-}}$ is also a binomial.
\end{proof}

\section{Pseudo-resolution}

Combinatorial blowing up are enough to obtain a log-resolution of monomial ideals, but it is not possible to obtain log-resolution of binomial ideals only with this transformations.
However we will prove that these combinatorial transformations are enough to obtain pseudo-resolutions which allow to compute the log canonical threshold.
\medskip

In what follows $\Sigma$ is always a regular fan and $\Xi=\{v_1,\ldots,v_m\}$ is a set of vertices of $\Sigma$.

\begin{Proposition} \label{PropTTransMon}
Let $\Sigma'$ be the star subdivision of $\Sigma$ with center $\sigma\in\Sigma$, let $\Xi=\{v_1,\ldots,v_m\}$ be a set of vertices of $\Sigma$ and let $\Xi'=\{v_1,\ldots,v_m,v_{m+1}\}$ be a set of vertices of $\Sigma'$.
Assume $\sigma(1)=\{v_{i_{1}},\ldots,v_{i_{r}}\}$ are the vertices of $\sigma$.

If $v^a=v_1^{a_1}\cdots v_m^{a_m}$ is a monomial in $W_{\Sigma}$, then the
total transform of $v^a$ is the monomial in $W_{\Sigma'}$
\begin{equation*}
v_1^{a_1}\cdots v_m^{a_m} v_{m+1}^{a_{m+1}},
\qquad
a_{m+1}=a_{i_{1}}+\cdots+a_{i_{r}}.
\end{equation*}

If $f=v^{\beta^{+}}-uv^{\beta^{-}}$, with $\beta=(\beta_1,\ldots,\beta_m)\in\mathbb{Z}^m$, is an irreducible binomial in $W_{\Sigma}$, then the strict 
transform of $f$ in $W_{\Sigma'}$ is $f'=v^{{\beta'}^{+}}-uv^{{\beta'}^{-}}$ where
\begin{equation*}
\beta'=(\beta_1,\ldots,\beta_m,\beta_{m+1}), \qquad
\beta_{m+1}=\beta_{i_{1}}+\cdots+\beta_{i_{r}}
\end{equation*}
\end{Proposition}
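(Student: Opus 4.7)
The plan is to verify both statements locally on the affine charts covering $W_{\Sigma'}$, using the explicit description of a combinatorial blow-up in Remark~\ref{RemCombBlup}. Fix a maximal cone $\tilde\sigma\in\Sigma$ having $\sigma$ as a face and extend $\sigma(1)=\{v_{i_1},\ldots,v_{i_r}\}$ to a $\mathbb{Z}$-basis $v_{i_1},\ldots,v_{i_n}$ of $N$ that generates $\tilde\sigma$. By Remark~\ref{RemCombBlup}, the restriction of the combinatorial blow-up $\bigcup_{j=1}^{r}U_{\tilde\sigma_j}\to U_{\tilde\sigma}$ is the usual blow-up with center $(x_{i_1},\ldots,x_{i_r})$, and the pullback of variables to $U_{\tilde\sigma_j}$ is $x_{i_k}\mapsto y_{i_k}y_{i_j}$ for $k\le r,\ k\neq j$, $x_{i_j}\mapsto y_{i_j}$, and $x_{i_k}\mapsto y_{i_k}$ for $k>r$; moreover in this chart the coordinate $y_{i_j}$ is the one associated to the new vertex $v_{m+1}=v_{i_1}+\cdots+v_{i_r}$, which replaces $v_{i_j}$ as the $j$-th generator of $\tilde\sigma_j$.

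For the monomial claim I would substitute these relations into the local expression $x_{i_1}^{a_{i_1}}\cdots x_{i_n}^{a_{i_n}}$ of $v^a$ supplied by Definition~\ref{DefGlobalMon} and collect powers of $y_{i_j}$, obtaining
\begin{equation*}
 y_{i_j}^{\,a_{i_1}+\cdots+a_{i_r}}\prod_{k\neq j}y_{i_k}^{a_{i_k}},
\end{equation*}
which is exactly the local expression on $U_{\tilde\sigma_j}$ of $v_1^{a_1}\cdots v_m^{a_m}v_{m+1}^{a_{m+1}}$ with $a_{m+1}=a_{i_1}+\cdots+a_{i_r}$. On charts $U_{\tilde\sigma}$ where $\sigma$ is not a face of $\tilde\sigma$, the cone persists in $\Sigma'$, the vertex $v_{m+1}$ is not among its generators, and the two local expressions agree trivially. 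Since these charts cover $W_{\Sigma'}$, the formula for the total transform of $v^a$ follows.

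For the binomial, the same substitution applied to the local expression of $f=v^{\beta^{+}}-uv^{\beta^{-}}$ yields
\begin{equation*}
 y_{i_j}^{s^{+}}\prod_{k\neq j}y_{i_k}^{\beta^{+}_{i_k}}\;-\;u\,y_{i_j}^{s^{-}}\prod_{k\neq j}y_{i_k}^{\beta^{-}_{i_k}},\qquad s^{\pm}:=\beta^{\pm}_{i_1}+\cdots+\beta^{\pm}_{i_r}.
\end{equation*}
The strict transform is obtained by dividing out the largest common monomial factor, namely $y_{i_j}^{\min(s^{+},s^{-})}$. Matching what remains against the claimed $v^{(\beta')^{+}}-u\,v^{(\beta')^{-}}$ reduces to showing that the residual exponents of $y_{i_j}$ equal $(\beta')^{+}_{m+1}$ and $(\beta')^{-}_{m+1}$. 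This is the only real calculation, and it is where the main (mild) obstacle lies: since $\beta_{m+1}=\beta_{i_1}+\cdots+\beta_{i_r}=s^{+}-s^{-}$, the required identities are $(s^{+}-s^{-})^{+}=s^{+}-\min(s^{+},s^{-})$ and $(s^{+}-s^{-})^{-}=s^{-}-\min(s^{+},s^{-})$, which I would dispatch by a short case split on the sign of $s^{+}-s^{-}$. Everything else is bookkeeping in the coordinate change.
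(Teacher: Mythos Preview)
Your proof is correct and follows the same route the paper indicates: the paper's one-line argument ``It follows from \ref{RemCombBlup} and \ref{PropIrredBin}'' is precisely what you unpack, using the explicit chart description of the blow-up from Remark~\ref{RemCombBlup} for the monomial formula and then factoring the total transform of the binomial into a monomial times an irreducible binomial (the content of Proposition~\ref{PropIrredBin}) to read off the strict transform. Your case split on the sign of $s^{+}-s^{-}$ is exactly the verification that the factorization of Proposition~\ref{PropIrredBin}, applied to the transformed binomial with new exponent $\beta_{m+1}=s^{+}-s^{-}$, returns $v^{(\beta')^{+}}-u\,v^{(\beta')^{-}}$.
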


\begin{proof}
It follows from Remark \ref{RemCombBlup} and Proposition \ref{PropIrredBin}.
\end{proof}

Note that if $\mathfrak{a}$ is a monomial (resp. m-binomial) ideal then after a
combinatorial blowing up $W'\to W$ the total transform of $\mathfrak{a}$ is 
again a monomial (resp. m-binomial) ideal.
In fact this is also true for any regular subdivision $\Sigma'$ of $\Sigma$, even if $\Sigma'$ is not obtained by a sequence of star subdivisions.
\medskip

If $a=(a_1,\ldots,a_m)\in\mathbb{Z}^m$ and $\sigma\in\Sigma$ is a cone with $\sigma(1)=\{v_{i_1},\ldots,v_{i_r}\}$ then we denote $a_{\sigma}=(a_{i_1},\ldots,a_{i_r})\in\mathbb{Z}^r$.
\begin{Definition} \label{DefHiperb}
We say that an irreducible binomial $f=v^{\beta^{+}}-uv^{\beta^{-}}$, with $\beta\in\mathbb{Z}^m$, is a \emph{hyperbolic equation} if for every cone $\sigma\in\Sigma$
then either $\beta^{+}_{\sigma}=0$ or $\beta^{-}_{\sigma}=0$.
\medskip

A binomial $f=v^a-uv^b$ in $W_{\Sigma}$ is said to be \emph{weakly resolved} if $f$ may be expressed as the product of a monomial and a hyperbolic equation.
Using the notation of Proposition \ref{PropIrredBin} this means that if $\beta=a-b$ then for every cone $\sigma\in\Sigma$ either $\beta^{+}_{\sigma}=0$ or
$\beta^{-}_{\sigma}=0$.
\medskip

We will say that every monomial $v^a$ is weakly resolved.
\end{Definition}

Let $\Sigma'$ be a subdivision of $\Sigma$ obtained by a sequence of star subdivisions, and let $f$ be a binomial in $W_{\Sigma}$.
Note that the morphism $W_{\Sigma'}\to W_{\Sigma}$ is a log-resolution of the ideal generated by $f$, if and only if the total transform of $f$ is weakly resolved.

For an ideal generated by more that one binomial the above result is not true, in this case one can obtain an embedded desingularization of the variety defined by the binomials, see \cite[Section 6]{Teissier2004}.

\begin{Definition} \label{Defweak}
Let $\mathfrak{a}\subset\OO_{W}$ be a 
m-binomial ideal generated by binomials and monomials $f_{1},\ldots,f_{r}$.

A \emph{weak-pseudo-resolution} of $\mathfrak{a}$ with respect to the generators $f_1,\ldots,f_r$ is a sequence of combinatorial blowing-ups
$W'_{\Sigma'}\to W_{\Sigma}$ such that the total transforms of generators $f^{\ast}_{1},\ldots,f^{\ast}_{r}$
are weakly-resolved as in (\ref{DefHiperb}).
\end{Definition}

The above definition depends on the generators of the ideal.
Note also that every monomial ideal is already weak-pseudo-resolved.

\begin{Definition} \label{DefParcOrd}
We denote the partial ordering $\preceq$ in $\mathbb{Z}^{m}$, defined 
as follows:
If $\alpha,\beta\in\mathbb{Z}^{m}$ then $\alpha\preceq\beta$ if and 
only if $\alpha_{i}\leq\beta_{i}$ for all $i=1,\ldots,m$.
Where 
$\alpha=(\alpha_{1},\ldots,\alpha_{m})$ and 
$\beta=(\beta_{1},\ldots,\beta_{m})$.

It is the usual partial ordering given by division of monomials.
\end{Definition}
Note that if a binomial $v^{a}-uv^{b}$ in $W_{\Sigma}$ is weakly resolved then for every $\sigma\in\Sigma$ we have that either $a_{\sigma}\preceq b_{\sigma}$ or $b_{\sigma}\preceq a_{\sigma}$.

Using the above partial ordering we can refine a weak-pseudo-resolution to a pseudo-resolution.
\begin{Definition} \label{DefPseudoRes}
Let $\mathfrak{a}\subset\OO_{W}$ be a 
m-binomial ideal generated by binomials and monomials $f_{1},\ldots,f_{r}$.

A \emph{pseudo-resolution} of $\mathfrak{a}$ with respect to generators $f_1,\ldots,f_r$ is a weak-pseudo-resolution
$W'\to W$ such that if $f^{\ast}_{1},\ldots,f^{\ast}_{r}$ are the total transforms
and for $i=1,\ldots,r$
\begin{align*}
f^{\ast}_{i}=v^{\alpha_{i}}(v^{\beta^{+}_{i}}-u_i v^{\beta^{-}_{i}}) & 
\text{ if } f_i \text{ is a binomial, or} \\
f^{\ast}_{i}=v^{\alpha_{i}} & 
\text{ if } f_i \text{ is a monomial},
\end{align*}
then for every cone $\sigma\in\Sigma'$ the vectors
$(\alpha_{1})_{\sigma},\ldots,(\alpha_{r})_{\sigma}$
are totally ordered with respect to the ordering $\preceq$.
\end{Definition}

Note that in order to prove if a weak-pseudo-resolution is a pseudo-resolution it is enough to check 
the condition of totally ordered only for maximal cones of $\Sigma'$.

\begin{Proposition} \label{PseudoExist}
Let $\mathfrak{a}$ be a m-binomial ideal generated by $f_{1},\ldots,f_{r}$.
There exists a pseudo-resolution of the ideal $\mathfrak{a}$ and it can be obtained by a sequence of star subdivisions of $\Sigma$.
\end{Proposition}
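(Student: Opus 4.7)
The plan is to build the pseudo-resolution in two stages of codimension-two star subdivisions, following Zeillinger~\cite{Zeillinger2006} for the binomial step and Goward~\cite{Goward2005} for the monomial step.

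\textbf{Stage 1 (weak-pseudo-resolution).} If some irreducible binomial factor $v^{\beta_i^+}-u_iv^{\beta_i^-}$ of $f_i$ is not hyperbolic, there is a cone $\sigma\in\Sigma$ and rays $v_{j_1},v_{j_2}\in\sigma(1)$ with $(\beta_i)_{j_1}>0$ and $(\beta_i)_{j_2}<0$. I would star-subdivide at the codimension-two face $\tau=\langle v_{j_1},v_{j_2}\rangle$: by Proposition~\ref{PropTTransMon} the new vertex $v_{m+1}$ carries $(\beta_i)_{m+1}=(\beta_i)_{j_1}+(\beta_i)_{j_2}$, whose absolute value is strictly smaller than $\max(|(\beta_i)_{j_1}|,|(\beta_i)_{j_2}|)$. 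Two observations control termination. First, on a cone $\sigma'$ admitting $\tau$ as a face where every $f_{i'}$ is already hyperbolic, the rays $v_{j_1},v_{j_2}$ carry $(\beta_{i'})$-values of the same sign, so $v_{m+1}$ inherits that sign and hyperbolicity is preserved on the subdivided cones. Second, for the offending $f_i$ on $\sigma$, the defect
\[
h(f_i,\sigma)=\Bigl(\sum_{j\in\sigma(1)}(\beta_i)_j^+\Bigr)\Bigl(\sum_{j\in\sigma(1)}(\beta_i)_j^-\Bigr)
\]
vanishes precisely when $f_i$ is hyperbolic on $\sigma$, and a Zeillinger-style minimality rule for the pair $(j_1,j_2)$ forces it to drop on each of the newly produced maximal cones. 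Assembling the values $\{h(f_i,\sigma)\}$ into a lexicographically ordered multiset yields a well-founded invariant that decreases strictly at every step, so finitely many subdivisions produce a weak-pseudo-resolution.

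\textbf{Stage 2 (refinement).} After Stage~1 write $f_i^{\ast}=v^{\alpha_i}(v^{\beta_i^+}-u_iv^{\beta_i^-})$ with each binomial factor hyperbolic. What remains is to totally order $\{(\alpha_1)_\sigma,\ldots,(\alpha_r)_\sigma\}$ under $\preceq$ on every maximal $\sigma\in\Sigma$, which is exactly Goward's principalization problem for the monomial ideal $(v^{\alpha_1},\ldots,v^{\alpha_r})$. If two exponents $(\alpha_i)_\sigma,(\alpha_j)_\sigma$ are $\preceq$-incomparable, pick rays $v_{k_1},v_{k_2}\in\sigma(1)$ witnessing the incomparability and star-subdivide at $\langle v_{k_1},v_{k_2}\rangle$; Goward's invariant then strictly decreases. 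These Stage~2 subdivisions cannot destroy Stage~1 hyperbolicity: on every cone that contains both $v_{k_1}$ and $v_{k_2}$ each $\beta_i$ already has constant sign, so $(\beta_i)_{m+1}$ inherits that sign and the new maximal cones remain hyperbolic for every $f_i$. Iterating drives Goward's invariant to zero and produces the required pseudo-resolution.

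The main obstacle is Stage~1: a single star subdivision replaces one cone with several, so one must verify that the lexicographic multiset drops on \emph{every} newly produced maximal cone, not merely on the aggregate. This forces the careful choice of $(j_1,j_2)$ and an invariant that separately tracks the positive and negative parts of each binomial exponent, closely paralleling the monomial bookkeeping of \cite[Main Theorem]{Goward2005}.
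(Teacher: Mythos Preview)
Your two preservation observations are exactly right and are the real content of the argument, but you have over-engineered both stages and introduced one genuine slip.

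\textbf{Stage 1.} The global multiset invariant you propose is not needed, and (as you yourself flag) making it drop on every new cone for \emph{every} $f_i$ simultaneously is delicate: a centre permissible for $f_i$ need not be permissible for $f_j$, and then $h(f_j,\cdot)$ can jump up. The paper sidesteps this entirely by working \emph{sequentially}: log-resolve $f_1$ (by the single-binomial result, i.e.\ the $(L,Lp)$ argument of Propositions~\ref{PropLnocrece}--\ref{PropLldecrece}), then log-resolve $f_2$ on the resulting fan, and so on. Your first preservation observation is precisely what guarantees that treating $f_j$ never destroys the hyperbolicity already achieved for $f_1,\ldots,f_{j-1}$, so no global bookkeeping is required.

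\textbf{Stage 2.} Here there is a slip: total ordering of $(\alpha_1)_\sigma,\ldots,(\alpha_r)_\sigma$ is \emph{not} Goward's principalization of the single ideal $(v^{\alpha_1},\ldots,v^{\alpha_r})$. Principalization only forces one $\alpha_{i_0}$ to be $\preceq$-minimal on each cone; it says nothing about comparability of the remaining $\alpha_j$'s among themselves. What is needed (and what the paper does) is a simultaneous log-resolution of all the \emph{pairwise} monomial ideals $\{v^{\alpha_i},v^{\alpha_j}\}$, $i<j$: once each pair is principal on every cone, every two exponents are $\preceq$-comparable, hence totally ordered. Again one can proceed pair by pair sequentially, because your second preservation observation (and its monomial analogue: if $(\alpha_i)_\sigma\preceq(\alpha_j)_\sigma$ then the same holds on both subdivided cones, since the new coordinate is a sum of two old ones) ensures that resolving a later pair cannot undo earlier comparabilities or earlier hyperbolicity.

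So: keep your preservation lemmas, drop the multiset invariant, and replace the single-ideal Goward step by the family of pairwise two-generator log-resolutions. That is the paper's proof.
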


\begin{proof}
First one may produce log-resolutions of every generator $f_{i}$ 
to obtain a weak-pseudo-resolution $W'\to W$ of the ideal $\mathfrak{a}$. The total 
transforms $f^{\ast}_{1},\ldots,f^{\ast}_{r}$ can be  expressed as
\begin{equation*}
    f^{\ast}_{i}=v^{\alpha_{i}}(v^{\beta_i^{+}}-u_i v^{\beta_{i}^{-}}),
    \qquad\text{or}\qquad
    f^{\ast}_{i}=v^{\alpha_{i}},
\end{equation*}
where each $v^{\beta_i^{+}}-u_i v^{\beta_{i}^{-}}$, $i=1,\ldots,r$ is a hyperbolic equation.

Now consider the monomial ideals generated by the pairs of monomials
\begin{equation*}
    \{v^{\alpha_{i}},v^{\alpha_{j}}\},\qquad \forall i<j,
\end{equation*}
and construct a simultaneous log-resolution for all these monomial ideals, say $W''\to W'$. 
For every $\sigma\in\Sigma''$ we have that either $(\alpha_i)_{\sigma}\preceq(\alpha_j)_{\sigma}$ or 
$(\alpha_j)_{\sigma}\preceq(\alpha_i)_{\sigma}$, which means that the set
$\{(\alpha_{1})_{\sigma},\ldots,(\alpha_{r})_{\sigma}\}$ is totally ordered with respect to the ordering $\preceq$. We conclude that $W''\to W$ is a pseudo-resolution of the ideal generated by 
$f_{1},\ldots,f_{r}$.
\end{proof}

\begin{Remark}
Suppose $W'\to W$ is a pseudo resolution of $\mathfrak{a}=\left\langle f_1,\ldots,f_r\right\rangle$ (\ref{DefPseudoRes}).
Let $\sigma\in\Sigma'$ be a cone with 
$\sigma(1)=\{v_{i_1},\ldots,v_{i_s}\}$, so that the affine open set $U_{\sigma}\subset W'$ is
\begin{equation*}
U_{\sigma}=\Spec\left(k[x_1\ldots,x_s,x_{s+1}^{\pm},\ldots,x_n^{\pm}]\right)
\end{equation*}
The total transform of each $f_i$ in $U_{\sigma}$ is,
\begin{itemize}
\item $\left(f^{\ast}_i\right)|_{U_{\sigma}}=\tilde{w}_i x^{\tilde{\alpha}_i}(1-\tilde{u}_i x^{\tilde{\beta}_i})$,
if $f_i$ is a binomial or
\item $\left(f^{\ast}_i\right)|_{U_{\sigma}}=x^{\tilde{\alpha}_i}$, if $f_i$ is a monomial.
\end{itemize}
where $\tilde{\alpha}_i=(\alpha_i)_{\sigma}$ and
\begin{itemize}
\item $\tilde{\beta}_i=(\beta_i^{-}$, $\tilde{u}_i=u_i$, $\tilde{w}_i=1$, if $(\beta_i^{+})_{\sigma}=0$ or
\item $\tilde{\beta}_i=(\beta_i^{+}$, $\tilde{u}_i=u_i^{-1}$, $\tilde{w}_i=u_i$, if $(\beta_i^{-})_{\sigma}=0$.
\end{itemize}
Since we have a pseudo-resolution the vectors $\tilde{\alpha}_1,\ldots,\tilde{\alpha}_r$ are totally ordered with respect to $\preceq$,
so that there exists a permutation $\varepsilon$ such that
\begin{equation*}
\tilde{\alpha}_{\varepsilon(1)}\preceq\tilde{\alpha}_{\varepsilon(2)}\preceq
\cdots \tilde{\alpha}_{\varepsilon(r)}
\end{equation*}
If all $f_i$ are binomials then the total trasform of $\mathfrak{a}$ in $U_{\sigma}$ is the ideal
\begin{equation} \label{EqCaso1}
\left\langle
x^{\tilde{\alpha}_{\varepsilon(1)}}(1-\tilde{u}_{\varepsilon(1)} x^{\tilde{\beta}_{\varepsilon(1)}}),
\ldots
x^{\tilde{\alpha}_{\varepsilon(r)}}(1-\tilde{u}_{\varepsilon(r)} x^{\tilde{\beta}_{\varepsilon(r)}})
\right\rangle
\end{equation}
If some $f_i$ is a monomial,
set $r'$ to be the minimum index such that $f_{\varepsilon(r')}$ is a monomial
and the total trasform of $\mathfrak{a}$ in $U_{\sigma}$ is the ideal
\begin{equation} \label{EqCaso2}
\left\langle
x^{\tilde{\alpha}_{\varepsilon(1)}}(1-\tilde{u}_{\varepsilon(1)} x^{\tilde{\beta}_{\varepsilon(1)}}),
\ldots
x^{\tilde{\alpha}_{\varepsilon(r'-1)}}(1-\tilde{u}_{\varepsilon(r'-1)} x^{\tilde{\beta}_{\varepsilon(r'-1)}}),
x^{\tilde{\alpha}_{\varepsilon(r')}}
\right\rangle
\end{equation}
Pseudo resolution will reduce the computation of the log-canonical threshold of a binomial ideal to the cases 
(\ref{EqCaso1}) and (\ref{EqCaso2}), see Proposition~\ref{PropLctAfin}.
\end{Remark}

The proof of Proposition~\ref{PseudoExist} is based on the existence of log-resolution of monomial ideals.
Now we define some invariants that will produce a constructive pseudo-resolution where every step is a combinatorial blowing up with a two codimensional center.

\begin{Definition} \label{DefLf}
Fix a regular fan $\Sigma$ in $N$ and $\Xi=\{v_1,\ldots,v_m\}$ a set of vertices of $\Sigma$.

We define the function $L=L_{\Sigma}:\mathbb{Z}^m\to\mathbb{Z}_{\geq 0}$, for $\beta=(\beta_1,\ldots,\beta_m)\in\mathbb{Z}^m$ set
\begin{equation*}
    L(\beta)=\max\{|\beta_{i}-\beta_{j}|,\ \beta_{i}\beta_{j}<0,
    \text{ and } 
    \{v_{i},v_{j}\} \text{ generate a 2-dimensional cone in } \Sigma\}.
\end{equation*}
If $\sigma\in\Sigma$ is a cone then we set
\begin{equation*}
    L(\beta,\sigma)=\max\{|\beta_{i}-\beta_{j}|,\ \beta_{i}\beta_{j}<0,
    \text{ and } 
    \{v_{i},v_{j}\} \text{ generate a 2-dimensional face of } \sigma\}.
\end{equation*}
If $f=v^{\beta^{+}}-uv^{\beta^{-}}$ is an irreducible binomial in $W_{\Sigma}$ then set 
$L(f)=L(\beta)$.

In general, if $v^a$ and $v^b$ are two monomials in $W_{\Sigma}$ then we set
$L(\{v^a,v^b\})=L(a-b)$.
\end{Definition}

Note that $L(\beta)=\max\{L(\beta,\sigma)\mid \sigma\in\Sigma\}$ and
\begin{equation*}
    L(\beta)=\max\{|\beta_{i}|+|\beta_{j}|,\ \beta_{i}\beta_{j}<0,
    \text{ and } 
    \{v_{i},v_{j}\} \text{ generate a 2-dimensional cone in } \Sigma\}.
\end{equation*}

\begin{Example} \label{ExLf}
Consider $W=\Spec(k[x_1,x_2,x_3,x_4,x_5])$, let $f=x_1^3 x_2 x_3-x_4^3 x_5^5$ be a binomial. The fan $\Sigma$ has vertices $v_1,v_2,v_3,v_4,v_5$ and
the irreducible binomial $f$ is represented by $\beta=(3,1,1,-3,-5)$.

We have that $L(\beta)=8$, and this value is given by the pair $\{v_1,v_5\}$.
\end{Example}

Function $L(f)$ was introduced in \cite{Zeillinger2006} in order to give a solution to Hironaka's polyhedra game. See also \cite{Goward2005} where the same invariant appears.
The two-codimensional centers that will be allowed in our procedure will be those pairs appearing in the definition of the function $L_{\Sigma}$.
\begin{Definition}
A 2-dimensional face $\{v_{i},v_{j}\}$ of $\Sigma$ is \emph{permissible} for 
an irreducible binomial $f=v^{\beta^{+}}-uv^{\beta^{-}}$ iff $\beta_{i}\beta_{j}<0$.

In general if $v^a$ and $v^b$ are two monomials then we say that $\{v_{i},v_{j}\}$ is \emph{permissible} for the pair of monomials $\{v^a,v^b\}$ if $(a_i-b_i)(a_j-b_j)<0$.
\end{Definition}

\begin{Proposition} \label{PropLnocrece}
Let $f$ be an irreducible binomial in $W_{\Sigma}$ and let $\{v_i,v_j\}$ be a permissible 2-dimensional face of $\Sigma$ for $f$.
If $\Sigma'$ is the star subdivision of $\Sigma$ with center $\{v_i,v_j\}$ and $f'$ is
the strict transform of $f$, then $L_{\Sigma'}(f')\leq L_{\Sigma}(f)$.
\end{Proposition}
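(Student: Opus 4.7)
The plan is to describe the strict transform $f'$ explicitly via Proposition~\ref{PropTTransMon} and then enumerate the $2$-dimensional cones of $\Sigma'$ that can contribute to $L_{\Sigma'}(f')$, bounding each contribution by $L_{\Sigma}(f)$.

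Write $\tau=\{v_{i},v_{j}\}$, so by permissibility $\beta_{i}\beta_{j}<0$. The star subdivision adds one new ray $v_{m+1}=v_{i}+v_{j}$, and by Proposition~\ref{PropTTransMon} the strict transform has exponent vector $\beta'=(\beta_{1},\ldots,\beta_{m},\beta_{i}+\beta_{j})$. Since star subdivision only modifies cones containing $\tau$, the $2$-dimensional cones of $\Sigma'$ fall into two families: (A) $2$-dimensional cones $\{v_{k},v_{\ell}\}$ already present in $\Sigma$ with $\{k,\ell\}\neq\{i,j\}$ (the face $\tau$ itself is the only $2$-dim cone that is destroyed); (B) new faces of the form $\{v_{m+1},v_{k}\}$, where $v_{k}$ is a generator of some cone $\sigma\in\Sigma$ containing $\tau$.

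Family (A) is immediate, because $\beta'_{k}-\beta'_{\ell}=\beta_{k}-\beta_{\ell}$, so these contributions are already present in $L_{\Sigma}(f)$. For family (B) with $v_{k}\in\{v_{i},v_{j}\}$, say $v_{k}=v_{i}$, I would compute $|\beta'_{i}-\beta'_{m+1}|=|\beta_{j}|\le|\beta_{i}|+|\beta_{j}|=|\beta_{i}-\beta_{j}|\le L_{\Sigma}(f)$, the last inequality using that $\tau$ itself is a permissible face of $\Sigma$; the case $v_{k}=v_{j}$ is symmetric.

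The main obstacle, and the only delicate step, is family (B) with $v_{k}\notin\{v_{i},v_{j}\}$. Here $v_{i},v_{j},v_{k}$ all lie in a common regular cone of $\Sigma$, so $\{v_{k},v_{i}\}$ and $\{v_{k},v_{j}\}$ are $2$-dimensional cones of $\Sigma$. Assuming without loss of generality $\beta_{i}>0>\beta_{j}$, the pair $(v_{k},v_{m+1})$ contributes to $L_{\Sigma'}(f')$ only when $\beta_{k}(\beta_{i}+\beta_{j})<0$. If $\beta_{i}+\beta_{j}>0$ then $\beta_{k}<0$, hence $\beta_{k}\beta_{i}<0$ and
\[ |\beta_{k}-(\beta_{i}+\beta_{j})|=(\beta_{i}-\beta_{k})+\beta_{j}<\beta_{i}-\beta_{k}=|\beta_{k}-\beta_{i}|\le L_{\Sigma}(f); \]
the case $\beta_{i}+\beta_{j}<0$ is symmetric, giving $|\beta_{k}-(\beta_{i}+\beta_{j})|<|\beta_{k}-\beta_{j}|\le L_{\Sigma}(f)$. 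Taking the maximum over all contributing pairs then yields $L_{\Sigma'}(f')\le L_{\Sigma}(f)$.
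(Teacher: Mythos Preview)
Your proof is correct and follows essentially the same idea as the paper's. The only difference is organizational: the paper argues cone by cone, showing for each $\sigma\supseteq\tau$ that $\max\{L(f',\sigma_1),L(f',\sigma_2)\}\le L(f,\sigma)$ and compressing the treatment of the new edges into the single remark $|\beta_i+\beta_j|<\max\{|\beta_i|,|\beta_j|\}$ followed by ``it follows that''; you instead enumerate the $2$-dimensional cones of $\Sigma'$ directly and spell out the case analysis that the paper leaves implicit.
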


\begin{proof}
Note that $\Xi'=\Xi\cup\{v_i+v_j\}$.
If $\sigma\in\Sigma$ is a cone such that $\{v_i,v_j\}\not\subset\sigma(1)$ then $\sigma\in\Sigma'$ and $L(f',\sigma)=L(f,\sigma)$.

If $\{v_i,v_j\}\subset\sigma(1)$ then $\sigma$ is replaced in $\Sigma'$ by two cones, say $\sigma_1$ and $\sigma_2$, where $\{v_i,v_i+v_j\}\subset\sigma_1(1)$ and $\{v_j,v_i+v_j\}\subset\sigma_2(1)$.
Since $\beta_i\beta_j<0$ then $|\beta_i+\beta_j|<\max\{|\beta_i|,|\beta_j|\}$ and it follows that
\begin{equation*}
\max\{L(f',\sigma_1),L(f',\sigma_2)\}\leq L(f,\sigma).
\end{equation*}
\end{proof}

\begin{Example}
In the Example~\ref{ExLf}, the pair $\{v_1,v_5\}$ is permissible.
%In fact it is the only pair giving the value $L(\beta)=8$.
Let $\Sigma'$ be the star subdivision with center $\{v_1,v_5\}$.
The fan $\Sigma'$ has vertices $v_1,v_2,v_3,v_4,v_5,v_6$ and two maximal cones $\sigma'_1$ and $\sigma'_2$ generated, respectively by
$\{v_1,v_2,v_3,v_4,v_6\}$ and $\{v_2,v_3,v_4,v_5,v_6\}$. The corresponding affine open sets are
\begin{equation*}
U_{\sigma_1}=\Spec\left(k[y_1,y_2,y_3,y_4,y_6]\right),\qquad
U_{\sigma_2}=\Spec\left(k[z_2,z_3,z_4,z_5,z_6]\right),
\end{equation*}
where $x_1=y_1y_6=z_6$, $x_2=y_2=z_2$, $x_3=y_3=z_3$, $x_4=y_4=z_4$, $x_5=y_6=z_5 z_6$.

Following Proposition \ref{PropTTransMon}, set $\beta'=(3,1,1,-3,-5,-2)$ and
the strict transform of $f$ is $f'=v_1^3v_2v_3-v_4^3 v_5^5 v_6^2$.
At every chart the binomial $f'$ is
\begin{equation*}
f'_{\sigma_1}=y_1^3y_2y_3-y_4^3 y_6^2, \qquad
f'_{\sigma_2}=z_2z_3-z_4^3 z_5^5 z_6^2.
\end{equation*}
Now $L(\beta')=6<L(\beta)=8$.
The value $L(\beta')=6$ is given by the pairs $\{v_1,v_4\}$, $\{v_2,v_5\}$ and $\{v_3,v_5\}$.

Let $\Sigma''$ be the star subdivision with center $\{v_2,v_5\}$.
The fan $\Sigma''$ has vertices $v_1$, $v_2$, $v_3$, $v_4$, $v_5$, $v_6$, $v_7$ and three maximal cones.
The strict transform of $f'$ is $f''=v_1^3v_2v_3-v_4^3 v_5^5 v_6^2 v_7^4$ and
$\beta''=(3,1,1,-3,-5,-2,-4)$.

At this step $L(\beta'')=6=L(\beta')$. But note that the value $6$ is given by pairs $\{v_1,v_4\}$ and $\{v_3,v_5\}$.
\end{Example}

This last observation is a general fact, the number of pairs giving a fixed value of the function $L$, decreases after a star subdivision with center in a permissible two-dimensional face.

\begin{Definition}
If $f$ is an irreducible binomial we define the pair of positive integers $(L(f),Lp(f))$, where $Lp(f)$ is the number of pairs $i<j$ such that $L(f)=|\beta_i-\beta_j|$.
\end{Definition}

\begin{Proposition} \label{PropLldecrece}
In the situation of Proposition~\ref{PropLnocrece}, if the pair $\{v_i,v_j\}$ is such that $L(f)=|\beta_i-\beta_j|$ then
\begin{equation*}
(L(f'),Lp(f'))<(L(f),Lp(f))
\end{equation*}
for the lexicographic ordering.
\end{Proposition}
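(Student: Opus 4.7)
The plan is to apply Proposition~\ref{PropLnocrece} to obtain $L(f')\leq L(f)$. If this inequality is strict the lexicographic inequality is immediate, so the substantive case is $L(f')=L(f)$, where I aim to prove the stronger statement $Lp(f')=Lp(f)-1$.

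To do this, I enumerate the permissible 2-dimensional cones of $\Sigma'$ that could witness $L(f')=L(f)$. The star subdivision with center $\{v_i,v_j\}$ has three effects on the collection of 2-dimensional cones: it destroys the 2-dimensional cone $\{v_i,v_j\}$, it leaves every other 2-dimensional cone of $\Sigma$ intact, and it creates new 2-dimensional cones involving the new vertex $v_{m+1}=v_i+v_j$, whose exponent in the strict transform is $\beta_{m+1}=\beta_i+\beta_j$ by Proposition~\ref{PropTTransMon}. The pair $\{v_i,v_j\}$ was a witness of $L(f)$ by hypothesis and is permissible since $\beta_i\beta_j<0$; this witness disappears in $\Sigma'$. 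Every other old witness survives as a permissible 2-dimensional cone of $\Sigma'$ with the same exponents, and therefore continues to witness $L(f')$. So it suffices to show that no new 2-dimensional cone involving $v_{m+1}$ can witness $L(f')=L(f)$, and then the count of witnesses drops by exactly one.

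The new 2-dimensional cones containing $v_{m+1}$ are $\{v_i,v_{m+1}\}$, $\{v_j,v_{m+1}\}$, and $\{v_k,v_{m+1}\}$ for each $v_k$ such that $\{v_i,v_j,v_k\}$ are rays of a common cone of $\Sigma$. For the first two, $|\beta_i-\beta_{m+1}|=|\beta_j|$ and $|\beta_j-\beta_{m+1}|=|\beta_i|$, both strictly less than $L(f)=|\beta_i|+|\beta_j|$ since $\beta_i\beta_j<0$ forces both exponents to be nonzero. For a pair $\{v_k,v_{m+1}\}$, assume without loss of generality $\beta_i>0>\beta_j$, and split on the sign of $\beta_{m+1}$. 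If $\beta_{m+1}>0$, permissibility forces $\beta_k<0$, and a direct computation gives $|\beta_k-\beta_{m+1}|=(\beta_i-\beta_k)+\beta_j=|\beta_i-\beta_k|+\beta_j<|\beta_i-\beta_k|\leq L(f)$. Symmetrically, if $\beta_{m+1}<0$ then $\beta_k>0$ and $|\beta_k-\beta_{m+1}|=|\beta_j-\beta_k|-\beta_i<|\beta_j-\beta_k|\leq L(f)$. If $\beta_{m+1}=0$ the pair is not permissible. The bounds $|\beta_i-\beta_k|\leq L(f)$ and $|\beta_j-\beta_k|\leq L(f)$ hold because $\{v_i,v_k\}$ and $\{v_j,v_k\}$ are 2-dimensional faces of the common cone of $\Sigma$ containing $v_i$, $v_j$, $v_k$.

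The main point requiring care, and essentially the only place where sign analysis is needed, is the strict inequality $|\beta_k-\beta_{m+1}|<L(f)$ for vertices $v_k$ that shared a cone with both $v_i$ and $v_j$ in $\Sigma$; the contributions of $\{v_i,v_{m+1}\}$ and $\{v_j,v_{m+1}\}$, as well as the disappearance of the pair $\{v_i,v_j\}$, are immediate from the subdivision formulas.
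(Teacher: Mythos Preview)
Your proof is correct and is precisely the detailed argument that the paper leaves implicit; the paper's own proof is the single line ``It follows from the proof of \ref{PropLnocrece}.'' Your enumeration of the $2$-dimensional cones of $\Sigma'$ (the surviving old ones, the destroyed $\{v_i,v_j\}$, and the new cones $\{v_i,v_{m+1}\}$, $\{v_j,v_{m+1}\}$, $\{v_k,v_{m+1}\}$) together with the sign analysis showing that none of the new cones can achieve $|\beta_i-\beta_j|=L(f)$ is exactly what is needed to justify that one-liner, and your use of the permissible faces $\{v_i,v_k\}$ and $\{v_j,v_k\}$ of the ambient cone to bound the new contributions by $L(f)$ is the right mechanism.
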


\begin{proof}
It follows from the proof of Proposition \ref{PropLnocrece}.
\end{proof}

\begin{Theorem}
Given an irreducible binomial $f=v^{\beta^{+}}-uv^{\beta^{-}}$, the log-resolution of the ideal generated by $f$ may be obtained by $2$-codimensional blowups.
\end{Theorem}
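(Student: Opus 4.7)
The plan is to use the lex-ordered invariant $(L(f),Lp(f))\in\mathbb{Z}_{\geq 0}^{2}$ as a well-founded termination measure and run a combinatorial algorithm in which every step is the star subdivision of a permissible 2-dimensional face of the current fan. By the remark following Definition~\ref{DefHiperb}, it suffices to produce a sequence of such subdivisions of $\Sigma$ after which the total transform of $f$ is weakly resolved; equivalently, after which the strict transform is a hyperbolic equation.

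First I would describe the step. Suppose the current fan is $\Sigma$ and the current strict transform of $f$ is still not hyperbolic, i.e.\ some cone $\sigma\in\Sigma$ has both $\beta^{+}_{\sigma}\neq 0$ and $\beta^{-}_{\sigma}\neq 0$. Pick two rays $v_i,v_j\in\sigma(1)$ with $\beta_i>0$ and $\beta_j<0$; they generate a 2-dimensional face of $\sigma$, hence of $\Sigma$, so $L(f)\geq |\beta_i-\beta_j|>0$. In particular, the maximum defining $L(f)$ is attained by some permissible 2-dimensional face $\{v_{i_0},v_{j_0}\}$ of $\Sigma$. Perform the combinatorial blowup (star subdivision) with center this face; this is a 2-codimensional blowup by Remark~\ref{RemCombBlup}. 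Proposition~\ref{PropLldecrece} gives $(L(f'),Lp(f'))<(L(f),Lp(f))$ in the lexicographic order.

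Second, the termination. The lexicographic order on $\mathbb{Z}_{\geq 0}^{2}$ is a well-ordering, so after finitely many iterations the invariant must reach $L(f)=0$. At that point, for every 2-dimensional cone $\{v_i,v_j\}$ of the current fan we have $\beta_i\beta_j\geq 0$. Now I would use the key closing observation: for any cone $\sigma$ of the current fan, any two rays $v_{i_k},v_{i_\ell}\in\sigma(1)$ span a 2-dimensional face, hence $\beta_{i_k}\beta_{i_\ell}\geq 0$; this forces all coordinates $\beta_{i_k}$ to share a common (weak) sign, i.e.\ either $\beta^{+}_{\sigma}=0$ or $\beta^{-}_{\sigma}=0$. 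Thus $f$ is a hyperbolic equation (Definition~\ref{DefHiperb}), its total transform is weakly resolved, and by the remark following Definition~\ref{DefHiperb} the composite of our star subdivisions is a log-resolution of $(f)$.

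The only nontrivial step is the last passage from $L=0$ to the hyperbolic condition, which is really just the combinatorial observation that a sign-agreement condition imposed on every edge of a simplex (every 2-face of each cone) propagates to the whole simplex; all the analytic content is already packaged into Propositions~\ref{PropLnocrece} and \ref{PropLldecrece}. I do not expect any serious obstacle; the role of the proof is simply to orchestrate these two propositions with the termination argument.
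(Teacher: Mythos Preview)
Your proposal is correct and follows exactly the approach the paper intends: the paper's own proof is the single line ``It follows from \ref{PropLldecrece},'' and you have simply unpacked that sentence by exhibiting the termination argument on $(L,Lp)$ and checking that $L=0$ forces the hyperbolic condition. There is nothing to add.
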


\begin{proof}
It follows from Proposition \ref{PropLldecrece}.
\end{proof}

\begin{Proposition}
If $\{v^a,v^b\}$ is a monomial ideal generated by two monomials, then the value $(L(a-b),Lp(a-b))$ may also be used to obtain a log-resolution of the ideal.
\end{Proposition}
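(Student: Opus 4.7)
The plan is to reduce this to the same invariant-decrease argument used for irreducible binomials (Propositions~\ref{PropLnocrece} and~\ref{PropLldecrece}), after translating ``log-resolution of $(v^a,v^b)$'' into a condition on $\gamma:=a-b$.

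First I would observe that on the chart $U_\sigma$, the ideal $(v^a,v^b)$ restricts to $(x^{a_\sigma},x^{b_\sigma}) = x^{\min(a_\sigma,b_\sigma)}\cdot(x^{\gamma^{+}_\sigma},x^{\gamma^{-}_\sigma})$, which is principal (hence generated by a single monomial and thus giving a normal crossing divisor) if and only if either $\gamma^{+}_\sigma=0$ or $\gamma^{-}_\sigma=0$. Running this through all maximal cones $\sigma\in\Sigma$, a morphism $W_{\Sigma'}\to W_\Sigma$ is a log-resolution of $(v^a,v^b)$ if and only if the transformed exponent vector $\gamma^{\ast}=a^{\ast}-b^{\ast}$ satisfies this sign condition on every cone of $\Sigma'$, equivalently if and only if $L_{\Sigma'}(\gamma^{\ast})=0$. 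So it suffices to drive $L$ down to $0$ by combinatorial blowings-up at permissible $2$-dimensional faces.

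Next, assume $L_\Sigma(\gamma)>0$ and pick a permissible face $\{v_i,v_j\}$ that realizes $L(\gamma)=|\gamma_i-\gamma_j|=|\gamma_i|+|\gamma_j|$ (permissible meaning $(a_i-b_i)(a_j-b_j)<0$). Perform the star subdivision with center $\{v_i,v_j\}$; the new vertex is $v_{m+1}=v_i+v_j$. By the monomial part of Proposition~\ref{PropTTransMon} applied to each generator, the total transforms are $v^{a}\cdot v_{m+1}^{a_i+a_j}$ and $v^{b}\cdot v_{m+1}^{b_i+b_j}$, so the difference $\gamma^{\ast}$ agrees with $\gamma$ on the old vertices and has a single new coordinate $\gamma^{\ast}_{m+1}=\gamma_i+\gamma_j$. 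Because $\gamma_i\gamma_j<0$, we have $|\gamma_i+\gamma_j|<\max(|\gamma_i|,|\gamma_j|)\le |\gamma_i|+|\gamma_j|=L(\gamma)$.

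Third, I would run exactly the combinatorial check from the proof of Proposition~\ref{PropLnocrece}: any $2$-face of $\Sigma'$ not involving $v_{m+1}$ is already a $2$-face of $\Sigma$ (and contributes no more than before); the old face $\{v_i,v_j\}$ no longer belongs to $\Sigma'$; and each new $2$-face incident to $v_{m+1}$ contributes at most $\max(|\gamma_i|,|\gamma_j|)<L(\gamma)$ to $L_{\Sigma'}(\gamma^{\ast})$ when considered together with the sign constraint. Hence $L_{\Sigma'}(\gamma^{\ast})\le L_\Sigma(\gamma)$ with equality only if the maximum is still achieved elsewhere, and in that case the number of realizing pairs $Lp$ strictly decreases because the pair $\{v_i,v_j\}$ has been destroyed. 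In either case $(L,Lp)$ drops strictly in the lexicographic order, mirroring Proposition~\ref{PropLldecrece}.

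Finally, since the lexicographic order on $\mathbb{Z}_{\geq 0}\times\mathbb{Z}_{\geq 0}$ is a well-ordering, iterating this procedure terminates at a fan $\Sigma^{(N)}$ with $L_{\Sigma^{(N)}}(\gamma^{(N)})=0$, which by the first step is a log-resolution of the ideal $(v^a,v^b)$. The argument is essentially a verbatim transcription of the binomial case; the only point that deserves an explicit remark is the equivalence in the first paragraph between the vanishing of $L$ and the locally principal (hence log-resolved) condition for a two-generator monomial ideal, and this is the step I expect to be the main thing to articulate carefully since the rest is the same invariant descent.
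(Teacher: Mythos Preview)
Your proposal is correct and is exactly the argument the paper has in mind: the paper states this proposition with no proof at all, immediately after the corresponding theorem for an irreducible binomial, so it is clearly meant to be read as ``same proof, with $\gamma=a-b$ playing the role of $\beta$''. Your write-up makes explicit the one genuinely new point, namely that $L_{\Sigma'}(\gamma')=0$ is equivalent to the two-generator monomial ideal being locally principal (hence log-resolved), and then invokes the invariant descent of Propositions~\ref{PropLnocrece} and~\ref{PropLldecrece} verbatim; this is precisely what the paper leaves implicit.
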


\begin{Corollary} \label{CorPseudo}
For a m-binomial ideal, we may obtain a pseudo resolution given by a sequence of two-codimensional blowing ups constructed with function $(L,Lp)$.
\end{Corollary}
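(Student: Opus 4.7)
The plan is to combine the two specialized procedures already established---the theorem for a single irreducible binomial and the preceding proposition for a two-generated monomial ideal---in the same two-step pattern used for proposition~\ref{PseudoExist}, but now with every blowup chosen constructively by $(L,Lp)$.

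First I would resolve the binomials one at a time. Applying the theorem to $f_1$ produces a finite sequence of two-codimensional star subdivisions at permissible pairs realizing $L(f_1)$, which strictly decreases $(L,Lp)$ lexicographically by proposition~\ref{PropLldecrece} and so terminates with a weakly resolved total transform of $f_1$. I would then repeat the process on (the total transform of) $f_2$, and so on. The point that must be checked is that a binomial which has already become weakly resolved stays that way under all subsequent star subdivisions. This is immediate from proposition~\ref{PropTTransMon}: a star subdivision at a face $\tau\subset\sigma$ creates a new vertex $v_{m+1}=v_{i_1}+\cdots+v_{i_r}$ with associated coordinate $\beta_{m+1}=\beta_{i_1}+\cdots+\beta_{i_r}$; since the old coordinates $\beta_{i_k}$ already shared a common sign on $\sigma$, so does their sum, hence hyperbolicity persists on every new cone.

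After this first phase every generator has the form $f_i^{\ast}=v^{\alpha_i}(v^{\beta_i^{+}}-u_iv^{\beta_i^{-}})$, or $f_i^{\ast}=v^{\alpha_i}$ when $f_i$ was monomial, with the parenthetical factor hyperbolic. Next, for each pair $i<j$, I would invoke the preceding proposition on the two-generated monomial ideal $(v^{\alpha_i},v^{\alpha_j})$: two-codimensional star subdivisions guided by $(L(\alpha_i-\alpha_j),Lp(\alpha_i-\alpha_j))$ drive $L$ to zero on every cone, which is exactly the condition that $(\alpha_i)_\sigma$ and $(\alpha_j)_\sigma$ be $\preceq$-comparable on every cone $\sigma$.

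The main obstacle is to argue that the two phases, and the successive pair-treatments inside the second phase, do not undo each other. Stability of weak resolution has just been addressed. For $\preceq$-comparability: on a cone untouched by the new blowup nothing changes, whereas on a new cone the added coordinate of each $\alpha$ is the sum $\sum_k(\alpha)_{i_k}$, and summing coordinatewise preserves any inequality that held on the parent cone, so total ordering of $(\alpha_i)_\sigma,(\alpha_j)_\sigma$ on the old cone yields total ordering on each replacement cone. Processing the pairs $(i,j)$ in any order therefore finally produces a pseudo-resolution in the sense of definition~\ref{DefPseudoRes}, realized entirely by two-codimensional blowings-up driven by $(L,Lp)$.
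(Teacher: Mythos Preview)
Your proof is correct and follows precisely the approach the paper intends: the corollary is stated without proof because it is meant to follow by rerunning the two-step scheme of proposition~\ref{PseudoExist} with the constructive, $(L,Lp)$-driven two-codimensional centers supplied by the theorem and the preceding proposition. You carry this out and, in addition, make explicit the two stability checks (preservation of hyperbolicity and of $\preceq$-comparability under further star subdivisions) that the paper leaves to the reader.

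One small point worth tightening: when you assert that ``the added coordinate of each $\alpha$ is the sum $\sum_k(\alpha)_{i_k}$'', this is not a general fact about the monomial part of a binomial under star subdivision; it holds here precisely because the hyperbolic factor, having $\beta$ of a single sign on the face $\{v_p,v_q\}$, acquires no monomial factor under the blowup (its total and strict transforms coincide). Your first stability argument already contains the ingredient needed for this, so the proof is complete, but it would read more cleanly if you noted explicitly that hyperbolicity is what guarantees the $\alpha$'s transform as pure monomials do.
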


\section{Formula of lct within a pseudo-resolution}

\begin{Definition} \label{DefNewton}
Let $\alpha_1,\ldots,\alpha_m$ points in $\mathbb{R}_{\geq 0}^n$.
The Newton polyhedron of $\alpha_1,\ldots,\alpha_m$ is the convex hull in $\mathbb{R}_{\geq 0}^n$ of the set
\begin{equation*}
\bigcup_{i=1}^{m}\left(\alpha_i+\mathbb{R}_{\geq 0}^n\right).
\end{equation*}
\end{Definition}

\begin{Remark} \label{RemNewton}
Note that a point $\beta$ is in the Newton polyhedron of $\alpha_1,\ldots,\alpha_m$ if and only if there exist real numbers $t_1,\ldots,t_m$ such that
\begin{itemize}
\item $0\leq t_i\leq 1$, for $i=1,\ldots,m$,
\item $t_1+\cdots+t_m=1$ and
\item $\beta\in \left(\sum_{i=1}^{m}t_i\alpha_i\right)+\mathbb{R}_{\geq 0}^n$
\end{itemize}

The Newton polyhedron may be also expressed in terms of hyperplanes inequalities as follows:
A point $\beta\in\mathbb{R}^n_{\geq 0}$ is in the Newton polyhedron of $\alpha_1,\ldots,\alpha_m$ if and only if
\begin{equation*}
\beta\cdot v\geq \min\{\alpha_1\cdot v,\ldots,\alpha_m\cdot v\} \qquad \forall\ v\in\mathbb{R}^n_{\geq 0}.
\end{equation*}
\end{Remark}

\begin{Lemma} \cite{Howald2001} 
\label{LemHowald}
Let $U$ be a smooth affine variety of dimension $n$.
Let $z_{1},\ldots,z_{n}$ be global sections in $\OO_{U}$ such that
they are uniformizing parameters of $U$, which means that
$\dd{z_{1}},\ldots,\dd{z_{n}}$ is a basis of $\Omega_{U/k}$.

Let $\mathfrak{a}\subset\OO_{U}$ be a monomial ideal in $U$ w.r.t 
$z$'s. The ideal $\mathfrak{a}$ is generated by monomials
\begin{equation*}
    z^{\gamma_{i}}, i=1,\ldots,r.
\end{equation*}
Let $\Delta$ be a divisor defined by a monomial $z^{c}$, $\langle z^c\rangle=\OO_U(-\Delta)$.
We assume that if $z_j$ is a unit at every point in $U$ then the previous monomials do not depend on $z_j$.

Then the multiplier ideal of $\mathfrak{a}$ is
\begin{equation*}
    \mathcal{J}(U,\Delta,\mathfrak{a}^{t})=
    \langle z^{\lambda}\mid \lambda+c+\mathbf{1}\in
    \Interior(tP)\rangle
\end{equation*}
where $P$ is the Newton polyhedron (\ref{DefNewton}) of $\gamma_{i}$'s in $\mathbb{N}^n$ and $\mathbf{1}=(1,\ldots,1)$.

In particular the $\lct$ is $\dfrac{1}{m}$ where $m$ is the minimum 
number such that $m(c+\mathbf{1})\in P$.
\end{Lemma}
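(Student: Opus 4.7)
The plan is to follow Howald's original argument: construct an explicit toric log-resolution $\Pi:W'\to U$ of $\mathfrak{a}$ on which $K_{W'/U}$, $\Pi^{\ast}\Delta$, and the total transform divisor $F$ are simultaneously torus-invariant, and then read off the multiplier ideal monomial by monomial.

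First I would reduce to the torus-invariant setting inside $\mathbb{A}^{n}_{k}$. The hypothesis that the generators $z^{\gamma_{i}}$ and $z^{c}$ only involve those $z_{j}$ that vanish at some point of $U$ means that, \'etale-locally, $U$ is a product of a neighbourhood of the origin in $\mathbb{A}^{n}_{k}$ with a torus, while $\mathfrak{a}$ and $\Delta$ are pulled back from the first factor. Since multiplier ideals commute with such smooth base change, it suffices to treat $U$ as a torus-invariant open subset of $\mathbb{A}^{n}_{k}$.

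Next I choose a regular fan $\Sigma$ subdividing $\mathbb{R}^{n}_{\geq 0}$ which refines the inner normal fan of the Newton polyhedron $P$ of $\gamma_{1},\dots,\gamma_{r}$. The induced toric morphism $\Pi:W_{\Sigma}\to\mathbb{A}^{n}$ is then a log-resolution of $\mathfrak{a}$ whose simple normal crossing support consists of the torus-invariant prime divisors $E_{\rho}$, $\rho\in\Sigma(1)$. Standard chart-by-chart toric calculations on $U_{\sigma}=\Spec k[w_{1},\dots,w_{n}]$ with $w_{j}$ dual to $\sigma(1)$ yield, for every ray $\rho$ with primitive generator $v_{\rho}$,
\begin{equation*}
\operatorname{ord}_{E_{\rho}}(\Pi^{\ast}z^{a})=a\cdot v_{\rho},
\quad
F=\sum_{\rho}(\min_{i}\gamma_{i}\!\cdot\! v_{\rho})\,E_{\rho},
\quad
K_{W_{\Sigma}/\mathbb{A}^{n}}=\sum_{\rho}(|v_{\rho}|-1)\,E_{\rho},
\end{equation*}
with $|v_{\rho}|=\mathbf{1}\!\cdot\! v_{\rho}$, together with the analogous expression for $\Pi^{\ast}\Delta$ having multiplicity $c\!\cdot\! v_{\rho}$ along $E_{\rho}$.

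Substituting into $\mathcal{J}(U,\Delta,\mathfrak{a}^{t})=\Pi_{\ast}\OO_{W_{\Sigma}}(K_{W_{\Sigma}/\mathbb{A}^{n}}-\Pi^{\ast}\Delta-\lfloor tF\rfloor)$, a monomial $z^{\lambda}$ lies in the multiplier ideal iff at each ray $\rho$ the coefficient of $E_{\rho}$ in $\operatorname{div}(\Pi^{\ast}z^{\lambda})+K_{W_{\Sigma}/\mathbb{A}^{n}}-\Pi^{\ast}\Delta-\lfloor tF\rfloor$ is non-negative. All quantities involved are integers, so the floor can be removed at the price of a strict inequality, and combining terms gives the condition $(\lambda+c+\mathbf{1})\!\cdot\! v_{\rho}>t\min_{i}\gamma_{i}\!\cdot\! v_{\rho}$ for every $\rho\in\Sigma(1)$. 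By \ref{RemNewton} and the fact that $\Sigma$ refines the normal fan of $P$, this is equivalent to $\lambda+c+\mathbf{1}\in\Interior(tP)$. Specialising to $\lambda=0$ gives the lct formula: $\mathcal{J}(U,\Delta,\mathfrak{a}^{t})$ is trivial iff $c+\mathbf{1}\in\Interior(tP)$, so $\lct$ equals $1/m$ with $m$ the smallest positive scalar satisfying $m(c+\mathbf{1})\in P$.

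The main expected obstacle is the combinatorial and divisor-theoretic bookkeeping: choosing $\Sigma$ regular and fine enough to be a log-resolution simultaneously compatible with $\mathfrak{a}$, $\Delta$ and $F+\Pi^{\ast}\Delta$; verifying that the pushforward is itself a monomial ideal, so that it suffices to test generators $z^{\lambda}$; and confirming that the interior condition only needs to be tested against the rays of $\Sigma$, which relies on $P$ being a rational polyhedron whose supporting hyperplanes have primitive normals lying among those rays.
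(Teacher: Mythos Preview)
Your proposal is correct and follows Howald's original toric argument, which is exactly what the paper invokes: its proof consists of the single sentence ``It is an easy generalization of Howald's formula \cite{Howald2001}, see \cite{Blickle2004} for more precise statement.'' So you have supplied the details the paper omits rather than taken a different route.
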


\begin{proof}
It is an easy generalization of Howald's formula \cite{Howald2001}, see 
\cite{Blickle2004} for more precise statement.
\end{proof}

As a direct consequence of Remark \ref{RemNewton} and Lemma \ref{LemHowald} we have the following result.
\begin{Lemma} \label{LemHowald2}
In the situation of Lemma~\ref{LemHowald}, the $\lct$ of the monomial ideal $\mathfrak{a}$ is
\begin{equation*}
\lct(U,\Delta,\mathfrak{a})=\min\left\{
\frac{(c+\mathbf{1})\cdot v}{\min\{\gamma_1\cdot v,\ldots,\gamma_r\cdot v\}}
\mid v\in\mathbb{R}_{\geq 0}^{n}\right\}.
\end{equation*}
\end{Lemma}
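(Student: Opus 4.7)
The plan is to combine Lemma~\ref{LemHowald} with the dual (supporting hyperplane) description of the Newton polyhedron given in Remark~\ref{RemNewton}. By Lemma~\ref{LemHowald}, $\lct(U,\Delta,\mathfrak{a}) = 1/m$, where $m$ is the smallest positive real number with $m(c+\mathbf{1})\in P$ and $P$ is the Newton polyhedron of $\gamma_1,\ldots,\gamma_r$. So it suffices to re-express $m$ as a supremum and then take reciprocals.

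By Remark~\ref{RemNewton}, the condition $t(c+\mathbf{1})\in P$ is equivalent to
\[
t(c+\mathbf{1})\cdot v \;\geq\; \min\{\gamma_1\cdot v,\ldots,\gamma_r\cdot v\} \qquad \forall\, v\in\mathbb{R}^n_{\geq 0}.
\]
Because $c+\mathbf{1}$ has strictly positive entries, $(c+\mathbf{1})\cdot v > 0$ for every nonzero $v\in\mathbb{R}^n_{\geq 0}$, so dividing the inequality yields $t \geq \frac{\min_i \gamma_i\cdot v}{(c+\mathbf{1})\cdot v}$ for all such $v$. Taking the smallest such $t$ gives
\[
m \;=\; \sup_{v\in\mathbb{R}^n_{\geq 0}\setminus\{0\}}\frac{\min\{\gamma_1\cdot v,\ldots,\gamma_r\cdot v\}}{(c+\mathbf{1})\cdot v},
\]
and inverting produces the claimed formula for $\lct(U,\Delta,\mathfrak{a})$.

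To justify writing \emph{min} rather than \emph{inf} in the statement, note that the ratio is homogeneous of degree zero in $v$, while $\min_i \gamma_i\cdot v$ is piecewise linear: on each maximal cone $\sigma$ of the inner normal fan of $P$ it agrees with $\gamma_{i(\sigma)}\cdot v$ for a fixed index $i(\sigma)$. On such a cone the quotient of the two linear forms is homogeneous, hence attains its extreme values on the rays of $\sigma$. Consequently the infimum is realized at some primitive ray of this fan, and is in fact a minimum.

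The proof is essentially routine given the two ingredients already in place; the only mildly delicate point is the positivity of the denominator $(c+\mathbf{1})\cdot v$, which follows immediately from the coordinatewise inequality $c+\mathbf{1}>\mathbf{0}$.
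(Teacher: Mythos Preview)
Your proposal is correct and follows essentially the same approach as the paper, which simply records the lemma as ``a direct consequence of \ref{RemNewton}'' without further detail; you have merely spelled out the routine steps (passing to the dual description of $P$ and inverting) and added a justification that the infimum is attained, which the paper omits. One small remark: in your paragraph on $\min$ versus $\inf$, homogeneity alone does not force a linear-fractional function to attain its extremes on the rays of a cone; the cleaner justification is that on each cone of the normal fan the function $L_1(v)/L_2(v)$ with $L_2>0$ is linear-fractional, and such a function restricted to any polytope (e.g.\ the slice $|v|=1$) attains its minimum at a vertex because $L_1-t^{\ast}L_2$ is linear.
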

The minimum in Lemma~\ref{LemHowald2} may be computed using linear programming, see \cite[Prop.~1.3]{ShibutaTakagi2009}.

Hyperbolic equations will play a key role when computing the log-canonical-threshold of binomial ideals.
These ideals have been characterized in \cite[Th. 2.1]{EisenbudSturmfels1996}.
We want to describe the behavior of such ideals when adding a new generator.

\begin{Lemma} \label{LemHiperb} 
Let $\beta_1,\ldots,\beta_r\subset\mathbb{Z}^n_{\geq 0}$.
Consider the hyperbolic binomials
\begin{equation*}
1-u_1 x^{\beta_1},\ldots,1-u_r x^{\beta_r}\in k[x_1,\ldots,x_n],
\end{equation*}
where $u_i\in k^{\ast}$, $i=1,\ldots,r$.

Let $\mathfrak{a}_r$ be the ideal generated by $1-u_1 x^{\beta_1},\ldots,1-u_r x^{\beta_r}$ in $k[x_1,\ldots,x_n]$.

Set $y_i=1-u_i x^{\beta_i}$, for $i=1,\ldots,r$, and let $U=\Spec(k[x_1,\ldots,x_n]_h)$  where $h=x^{\beta_1+\cdots+\beta_r}$.
Assume that $\beta_1,\ldots,\beta_r$ are $\mathbb{Q}$-linearly independent and set
\begin{equation*} \label{EqHomBeta}
\varphi:\left\langle \beta_1,\ldots,\beta_r \right\rangle_{\mathbb{Z}}\to k^{\ast}
\end{equation*}
to be the homomorphism from the lattice generated by $\beta_1,\ldots,\beta_r$ to the multiplicative group $k^{\ast}$ defined by $\varphi(\beta_i)=u_i$, $i=1,\ldots,r$.
\begin{enumerate}
\item $y_1,\ldots,y_r$ is part of a set of uniformizing parameters in $U$,

\item Let $\gamma\in\mathbb{Z}_{\geq 0}^n$ be a vector with non negative entries and $w\in k^{\ast}$.
The hyperbolic binomial $1-wx^{\gamma}$ is in the ideal $\mathfrak{a}_r$ if and only if
\begin{equation*}
\gamma\in\left\langle \beta_1,\ldots,\beta_r\right\rangle_{\mathbb{Z}}
\qquad\text{and}\qquad
\varphi(\gamma)=w.
\end{equation*}
\end{enumerate}
\end{Lemma}

\begin{proof} 
(1) and (2) are proved in \cite[Th. 2.1]{EisenbudSturmfels1996}, nevertheless for (1) one can check it directly. The log-jacobian matrix of $y_1,\ldots,y_r$ is
\begin{equation*}
\left(x_j\frac{\partial y_i}{\partial x_j}\right)_{i,j}=
\left(
\begin{array}{ccc}
\beta_{1,1}u_1 x^{\beta_1} & \cdots & \beta_{1,n}u_1 x^{\beta_1} \\ 
\vdots &  & \vdots \\ 
\beta_{r,1}u_r x^{\beta_r} & \cdots & \beta_{r,n}u_r x^{\beta_r}
\end{array}
\right),
\end{equation*}
and this matrix has rank $r$ in $k[x_1,\ldots,x_n]_h$ if and only if $\beta_1,\ldots,\beta_r$ are linearly independent.
% Se puede dar mas detalles
% Para (2) ver que la suma de dos está
% la diferencia si es >=0 tambien está
% Un \gamma es una diferencia de dos sumas de \beta's
% La unicidad viene de la prueba de EisenbudSturmfels1996
\end{proof}
Note that in the proof of Lemma \ref{LemHiperb}, only (1) needs that the field $k$ has to be of characteristic zero.
The result is also true for $k$ of characteristic $p>0$, if we assume that $\bar{\beta}_1,\ldots,\bar{\beta}_r$ are $\mathbb{F}_p$-linear independent.
\medskip

We want to study the description of the ideal $\mathfrak{a}_r$ in Lemma~\ref{LemHiperb} when adding a
new hyperbolic binomial $1-u_{r+1}x^{\beta_{r+1}}$.
In general we will not have a global description, but we are able to find open sets of $U$ with explicit description of the resulting ideal.
The following lemma generalizes this setting by adding monomials $x^{\alpha_i}$ to the generators.

\begin{Lemma} \label{LemHiper2}
Let $\beta_1,\ldots,\beta_r\in\mathbb{Z}_{\geq 0}^n$ be $\mathbb{Q}$-linearly independent as in Lemma \ref{LemHiperb}.
Let $\beta_{r+1}\in\mathbb{Z}_{\geq 0}^n$ be a vector, $u_1,\ldots,u_r,u_{r+1}\in k^{\ast}$ be units in $k$
and $\alpha_1,\ldots,\alpha_r,\alpha_{r+1}\in\mathbb{Z}_{\geq 0}^n$ be vectors such that
$\beta_{r+1}$ is $\mathbb{Q}$-linearly dependent on $\beta_1,\ldots,\beta_r$ and
\begin{equation*}
\alpha_1\preceq\alpha_2\preceq\cdots \preceq\alpha_r\preceq\alpha_{r+1}
\end{equation*}
Set $h=x^{\beta_1+\cdots+\beta_r}$ and consider the sheaves of ideals in $U=\Spec(k[x_1,\ldots,x_n]_h)$ defined globally in $U$ by:
\begin{gather*}
\mathfrak{a}_r=\left\langle x^{\alpha_1}(1-u_1x^{\beta_1}),\ldots,x^{\alpha_r}(1-u_r x^{\beta_r}) \right\rangle \\
\mathfrak{a}_{r+1}=
\left\langle x^{\alpha_1}(1-u_1x^{\beta_1}),\ldots,x^{\alpha_r}(1-u_r x^{\beta_r}),
x^{\alpha_{r+1}}(1-u_{r+1}x^{\beta_{r+1}}) \right\rangle.
\end{gather*}
There are two open sets $U_1$ and $U_2$ in $U$ such that
\begin{enumerate}
\item $U_1\cup U_2=U$,

\item $\mathfrak{a}_{r+1}\OO_{U_1}=\mathfrak{a}_r\OO_{U_1}$ and

\item $\mathfrak{a}_{r+1}\OO_{U_2}=
\left(\mathfrak{a}_r+\langle x^{\alpha_{r+1}} \rangle\right)\OO_{U_2}$.
\end{enumerate}
\end{Lemma}

\begin{proof}
Let $\varphi:\langle\beta_1,\ldots,\beta_r\rangle_{\mathbb{Z}}\to k^{\ast}$ be the homomorphism defined as in Lemma \ref{EqHomBeta}.
By assumption,
there exist a positive integer $q$ and integers $\lambda_1,\ldots,\lambda_r$ such that
\begin{equation*}
q\beta_{r+1}=\lambda_1\beta_1+\cdots+\lambda_r\beta_r.
\end{equation*}
If $u_{r+1}^q\neq \varphi(q\beta_{r+1})$ then by Lemma \ref{LemHiperb} we have
\begin{equation*}
x^{\alpha_{r+1}}(1-u_{r+1}^q x^{q\beta_{r+1}})\in \mathfrak{a}_{r+1} 
\qquad\text{and}\qquad 
x^{\alpha_{r+1}}(1-\varphi(q\beta_{r+1})x^{q\beta_{r+1}})\in\mathfrak{a}_r
\end{equation*}
and we have that
$x^{\alpha_{r+1}}\in\mathfrak{a}_{r+1}\OO_U$. In this case we may set $U_2=U$, $U_1=\emptyset$.

If $u_{r+1}^q=\varphi(q\beta_{r+1})$, by Lemma \ref{LemHiperb} we have that 
$x^{\alpha_{r+1}}(1-u_{r+1}^{q}x^{q\beta_{r+1}})\in \mathfrak{a}_r$. Set
\begin{equation*}
g=\frac{1-u_{r+1}^{q}x^{q\beta_{r+1}}}{1-u_{r+1}x^{\beta_{r+1}}}\in k[x_1,\ldots,x_n]
\qquad\text{and note that}\qquad
\langle g,1-u_{r+1}x^{\beta_{r+1}}\rangle=1.
\end{equation*}
Set $U_1$ and $U_2$ the open sets of $U$ given by localization at $g$ and $1-u_{r+1}x^{\beta_{r+1}}$ respectively and we obtain the result.
\end{proof}
Note that if the number $q$ in the proof is $q=1$ then $U_1=U$.
\medskip

The ideals obtained after performing a pseudo-resolution $W'\to W$ will be, in each affine open set, either as in (\ref{EqCaso1}) or as in (\ref{EqCaso2}).

Using Lemma \ref{LemHiper2} we are able to refine the usual affine open covering of $W'$ such that at every open set the ideal is monomial with respect to a set of uniformizing parameters.

\begin{Definition} \label{DefBetaUComp}
Let $\beta_1,\ldots,\beta_r\in\mathbb{Z}^n$ be vectors and $u_1,\ldots,u_r\in k^{\ast}$ be units.

We say that $\beta_1,\ldots,\beta_r$ and $u_1,\ldots,u_r$ are \emph{compatible} if there is an homomorphism
\begin{equation*}
\varphi:\langle \beta_1,\ldots,\beta_r\rangle_{\mathbb{Z}}\to k^{\ast}
\end{equation*}
such that $\varphi(\beta_i)=u_i$ for all $i=1,\ldots,r$.
\end{Definition}
Note that the homomorphism $\varphi$, if it exists, it is unique. 
Note also that we do not require $\beta_1,\ldots,\beta_r$ to be linearly independent.

\begin{Remark}
Assume that $\beta_1,\ldots,\beta_{r-1}$ and $u_1,\ldots,u_{r-1}$ are compatible as above and let $\varphi$ the corresponding homomorphism,
$\varphi:\langle \beta_1,\ldots,\beta_{r-1}\rangle_{\mathbb{Z}}\to k^{\ast}$.

If $\beta_r$ is $\mathbb{Q}$-linearly independent of $\beta_1,\ldots,\beta_{r-1}$ then
$\beta_1,\ldots,\beta_r$ and $u_1,\ldots,u_r$ are compatible.

Assume $\beta_r$ is $\mathbb{Q}$-linearly dependent of $\beta_1,\ldots,\beta_{r-1}$, as in the proof of Lemma \ref{LemHiper2}, there are integers $q>0$ and $\lambda_1,\ldots,\lambda_{r-1}$ such that
\begin{equation*}
q\beta_r=\lambda_1\beta_1+\cdots+\lambda_{r-1}\beta_{r-1}
\end{equation*}
In this case $\beta_1,\ldots,\beta_r$ and $u_1,\ldots,u_r$ are compatible if and only if $\varphi(q\beta_r)=u_r^q$.
\end{Remark}

\begin{Nada}
We recall that after a pseudo resolution the total transform of our ideal will be as in (\ref{EqCaso1}) or as in (\ref{EqCaso2}). This means that we will obtain an ordered sequence of exponents $\beta_1,\ldots,\beta_r$ together with a sequence in $k^{\ast}$, say $u_1,\ldots,u_r$.
In the proof Lemma~\ref{LemGenLocMonom} we will use Lemma~\ref{LemHiper2} in order to see that the corresponding ideal is locally monomial at every open set of an open covering constructed in terms of $\beta$'s and $u$'s.
So that we need to check the condition to be compatible in Definition \ref{DefBetaUComp} of
$\beta_1,\ldots,\beta_i$ and $u_1,\ldots,u_i$ for every $i=1,\ldots,r$.
In the next remark we will define $\mathcal{N}(\beta,u)$ which will allow to construct explicitly the open covering.
\end{Nada}

\begin{Remark} \label{RemDefni}

Let $\beta_1,\ldots,\beta_r\in\mathbb{Z}^n$ be vectors and $u_1,\ldots,u_r\in k^{\ast}$ be units.
We associate to the ordered sequences $(\beta_1,\ldots,\beta_r)$ and $(u_1,\ldots,u_r)$ a sequence of integers $\mathcal{N}(\beta,u)$
\begin{equation*}
\mathcal{N}(\beta,u)=\{1=n_1<n_2<\cdots< n_s\leq \bar{r}\leq r\}
\end{equation*}
which is characterized by the following properties:
\begin{itemize}
\item $\beta_{n_1},\ldots,\beta_{n_s}$ are $\mathbb{Q}$-linearly independent.

\item If $j$ is such that $n_i<j<n_{i+1}$ for some $i<s$, then
$\beta_j$ is $\mathbb{Q}$-linearly independent of $\beta_{n_1},\ldots,\beta_{n_i}$.

\item If $j$ is such that $n_s<j\leq\bar{r}$ then
$\beta_j$ is $\mathbb{Q}$-linearly independent of $\beta_{n_1},\ldots,\beta_{n_s}$.

\item $\beta_1,\beta_2,\ldots,\beta_{\bar{r}}$ and $u_1,u_2,\ldots,u_{\bar{r}}$ are compatible
(\ref{DefBetaUComp}) and $\bar{r}$ is the maximum index with this property.
\end{itemize}

The sequence $n_1<\cdots<n_s\leq\bar{r}$ may be constructed inductively.
Assume that for some $\ell<r$ we have contructed the sequence
\begin{equation*}
\mathcal{N}(\beta_1,\ldots,\beta_{\ell};u_1,\ldots,u_{\ell})=
\{1=n_1<n_2<\cdots<n_{s'}\leq\bar{r}'\leq \ell\}
\end{equation*}
If $\bar{r}'<\ell$ then we have done and
\begin{equation*}
\mathcal{N}(\beta_1,\ldots,\beta_{r};u_1,\ldots,u_{r})=
\mathcal{N}(\beta_1,\ldots,\beta_{\ell};u_1,\ldots,u_{\ell})
\end{equation*}
Assume that $\bar{r}'=\ell$.
If $\beta_{\ell+1}$ is $\mathbb{Q}$-linearly independent of
$\beta_{n_1},\ldots,\beta_{n_{s'}}$ then set $n_{s''}=\ell+1$, $\bar{r}''=\ell+1$ and
\begin{equation*}
\mathcal{N}(\beta_1,\ldots,\beta_{\ell},\beta_{\ell+1};u_1,\ldots,u_{\ell},u_{\ell+1})=
\{1=n_1<n_2<\cdots<n_{s'}<n_{s''}=\bar{r}''=\ell+1\}
\end{equation*}
If $\beta_{\ell+1}$ is $\mathbb{Q}$-linearly dependent of
$\beta_{n_1},\ldots,\beta_{n_{s'}}$, there are two cases:
\begin{itemize}
\item If $\beta_1,\beta_2,\ldots,\beta_{\ell},\beta_{\ell+1}$ and
$u_1,u_2,\ldots,u_{\ell},u_{\ell+1}$ are compatible then set $\bar{r}''=\ell+1$ and
\begin{equation*}
\mathcal{N}(\beta_1,\ldots,\beta_{\ell},\beta_{\ell+1};u_1,\ldots,u_{\ell},u_{\ell+1})=
\{1=n_1<n_2<\cdots<n_{s'}<\bar{r}''=\ell+1\}
\end{equation*}
\item If $\beta_1,\beta_2,\ldots,\beta_{\ell},\beta_{\ell+1}$ and
$u_1,u_2,\ldots,u_{\ell},u_{\ell+1}$ are not compatible then
\begin{equation*}
\mathcal{N}(\beta_1,\ldots,\beta_{r};u_1,\ldots,u_{r})=
\mathcal{N}(\beta_1,\ldots,\beta_{\ell};u_1,\ldots,u_{\ell})
\end{equation*}
and we have done.
\end{itemize}
\end{Remark}
Note that the sequence $n_1<\cdots<n_s\leq\bar{r}$ depends on the ordering of the $\beta's$ and $u's$.

The sequence $n_1<\cdots<n_s$ will allow to describe generators of a binomial ideal such that it will be a monomial ideal at each open set of a suitable open cover.
\medskip

The next lemma proves that every ideal as in (\ref{EqCaso1}) or as in (\ref{EqCaso2}) is locally a monomial ideal.
\begin{Lemma} \label{LemGenLocMonom}
Let $\beta_1,\ldots,\beta_r\in\mathbb{Z}_{\geq 0}^n$ be vectors and $u_1,\ldots,u_r\in k^{\ast}$ be units.

Let $\alpha_1,\ldots,\alpha_r,\alpha_{r+1}\in\mathbb{Z}_{\geq 0}^n$ be vectors such that
\begin{equation*}
\alpha_1\preceq\alpha_2\preceq\cdots\preceq \alpha_r\preceq\alpha_{r+1}
\end{equation*}
Set the ideals in $k[x_1,\ldots,x_n]$:
\begin{gather*}
\mathfrak{a}=\left\langle
x^{\alpha_1}(1-u_1x^{\beta_1}),\ldots, x^{\alpha_r}(1-u_rx^{\beta_r})
\right\rangle \\
\mathfrak{b}=\left\langle
x^{\alpha_1}(1-u_1x^{\beta_1}),\ldots x^{\alpha_r}(1-u_rx^{\beta_r}),
x^{\alpha_{r+1}} \right\rangle
\end{gather*}
Let $\mathcal{N}(\beta,u)$ be the sequence defined in Remark \ref{RemDefni}
\begin{equation*}
\mathcal{N}(\beta,u)=\{1=n_1<n_2<\cdots< n_s\leq \bar{r}\leq r\}.
\end{equation*}
There exist an open covering of
$\mathbb{A}^n=U_1\cup U_2\cup\cdots\cup U_{\bar{r}}\cup U_{\bar{r}+1}$
such that:
\begin{enumerate}
\item For $j=1,\ldots,\bar{r}$
\begin{multline*}
\mathfrak{a}\OO_{U_j}=\mathfrak{b}\OO_{U_j}= \\
\left\langle 
x^{\alpha_{n_1}}(1-u_{n_1}x^{\beta_{n_1}}),x^{\alpha_{n_2}}(1-u_{n_2}x^{\beta_{n_2}}),
\ldots, x^{\alpha_{n_{i(j)}}}(1-u_{n_{i(j)}}x^{\beta_{n_{i(j)}}}),
x^{\alpha_j}\right\rangle,
\end{multline*}
where $i(j)$ is the maximum index with $n_{i(j)}< j$.

\item If $\bar{r}<r$ then
\begin{multline*}
\mathfrak{a}\OO_{U_{\bar{r}+1}}=\mathfrak{b}\OO_{U_{\bar{r}+1}}= \\
\left\langle 
x^{\alpha_{n_1}}(1-u_{n_1}x^{\beta_{n_1}}),x^{\alpha_{n_2}}(1-u_{n_2}x^{\beta_{n_2}}),
\ldots, x^{\alpha_{n_s}}(1-u_{n_s}x^{\beta_{n_s}}),
x^{\alpha_{\bar{r}+1}}\right\rangle.
\end{multline*}

\item If $\bar{r}=r$ then
\begin{gather*}
\mathfrak{a}\OO_{U_{\bar{r}+1}}=
\left\langle 
x^{\alpha_{n_1}}(1-u_{n_1}x^{\beta_{n_1}}),x^{\alpha_{n_2}}(1-u_{n_2}x^{\beta_{n_2}}),
\ldots, x^{\alpha_{n_s}}(1-u_{n_s}x^{\beta_{n_s}})\right\rangle, \\
\mathfrak{b}\OO_{U_{\bar{r}+1}}=
\left\langle 
x^{\alpha_{n_1}}(1-u_{n_1}x^{\beta_{n_1}}),x^{\alpha_{n_2}}(1-u_{n_2}x^{\beta_{n_2}}),
\ldots, x^{\alpha_{n_s}}(1-u_{n_s}x^{\beta_{n_s}}),
x^{\alpha_{r+1}}\right\rangle.
\end{gather*}

\item At every open set $U_j$, $j=1,\ldots,\bar{r},\bar{r}+1$ there is a set of uniformizing parameters such that the ideals $\mathfrak{a}\OO_{U_j}$ and $\mathfrak{b}\OO_{U_j}$ are monomial ideals with respect to the uniformizing parameters and moreover
\begin{gather*}
U_j\subset \{(1-u_jx^{\beta_j})\neq 0,\ x^{\beta_1+\cdots+\beta_{j-1}}\neq 0\},
\qquad j=1,\ldots,\bar{r} \\
U_{\bar{r}+1}\subset \{x^{\beta_1+\cdots+\beta_{\bar{r}}}\neq 0\}
\end{gather*}
\end{enumerate}
\end{Lemma}

\begin{proof}
Set $y_i=1-u_i x^{\beta_i}$ for $i=1,\ldots,r$.
Let $U_1$ be the open set $\{y_1\neq 0\}$ and $V_1$ be the open set $\{x^{\beta_1}\neq 0\}$
Note that $\mathbb{A}^n=U_1\cup V_1$ and
\begin{equation*}
\mathfrak{a}\OO_{U_1}=\mathfrak{b}\OO_{U_1}=
\left\langle x^{\alpha_{n_1}}\right\rangle
\end{equation*}
The nex step is to cover $V_1$ by open sets $U_2$ and $V_2$.
If $n_2>2$ then by Lemma \ref{LemHiper2} there is an open covering of $V_1$, $U_2\cup V_2$, where $U_2=\{y_2\neq 0\}\cap V_1$ and
\begin{gather*}
\mathfrak{a}\OO_{U_2}=\mathfrak{b}\OO_{U_2}=
\left\langle x^{\alpha_{n_1}}y_{n_1}, x^{\alpha_2}\right\rangle \\
\mathfrak{a}\OO_{V_2}=
\left\langle x^{\alpha_{n_1}}y_{n_1}, x^{\alpha_3}y_3,\ldots,x^{\alpha_r}y_r\right\rangle \\
\mathfrak{b}\OO_{V_2}=
\left\langle x^{\alpha_{n_1}}y_{n_1}, x^{\alpha_3}y_3,\ldots,x^{\alpha_r}y_r,
x^{\alpha_{r+1}}\right\rangle
\end{gather*}
If $n_2=2$ then set $U_2=\{y_2\neq 0,x^{\beta_1}\neq 0\}$ and $V_2=\{x^{\beta_1+\beta_2}\neq 0\}$
\begin{gather*}
\mathfrak{a}\OO_{U_2}=\mathfrak{b}\OO_{U_2}=
\left\langle x^{\alpha_{n_1}}y_1, x^{\alpha_2}\right\rangle \\
\mathfrak{a}\OO_{V_2}=
\left\langle x^{\alpha_{n_1}}y_{n_1}, x^{\alpha_{n_2}}y_{n_2}, 
x^{\alpha_3}y_3,\ldots,x^{\alpha_r}y_r\right\rangle \\
\mathfrak{b}\OO_{V_2}=
\left\langle x^{\alpha_{n_1}}y_{n_1}, x^{\alpha_{n_2}}y_{n_2},
x^{\alpha_3}y_3,\ldots,x^{\alpha_r}y_r,
x^{\alpha_{r+1}}\right\rangle
\end{gather*}
In both cases $U_2\subset\{y_2\neq 0,x^{\beta_1}\neq 0\}$ and
$V_2\subset\{x^{\beta_1+\beta_2}\neq 0\}$.
We have that $y_1$ is part of a set of uniformizing parameters in $U_2$.
\medskip

Note that for every $\ell$ either
\begin{itemize}
\item $n_{i(\ell)}<\ell\leq n_{i(\ell)+1}$ or
\item $i(\ell)=s$. In this case $n_s<\ell\leq\bar{r}$.
\end{itemize}

We proceed by induction on $\ell$.
Assume that for some $\ell<\bar{r}$ we have an open covering of $\mathbb{A}^n$, say
$U_1\cup U_2\cup \cdots\cup U_{\ell}\cup V_{\ell}$, such that
\begin{enumerate}
\item For $j=1,\ldots,\ell$
\begin{equation*}
\mathfrak{a}\OO_{U_j}=\mathfrak{b}\OO_{U_j}=
\left\langle x^{\alpha_{n_1}}y_{n_1},x^{\alpha_{n_2}}y_{n_2},\ldots,
x^{\alpha_{n_{i(j)}}}y_{n_{i(j)}}, x^{\alpha_j}\right\rangle
\end{equation*}

\item If $\ell<n_{i(\ell)+1}$ (or $i(\ell)=s$) then
\begin{eqnarray*}
\mathfrak{a}\OO_{V_{\ell}} & = & 
\left\langle x^{\alpha_{n_1}}y_{n_1},\ldots,x^{\alpha_{n_{i(\ell)}}}y_{n_{i(\ell)}},
 x^{\alpha_{\ell+1}}y_{\ell+1},\ldots,x^{\alpha_r}y_r\right\rangle \\
\mathfrak{b}\OO_{V_{\ell}} & = &
\left\langle x^{\alpha_{n_1}}y_{n_1},\ldots,x^{\alpha_{n_{i(\ell)}}}y_{n_{i(\ell)}},
 x^{\alpha_{\ell+1}}y_{\ell+1},\ldots,x^{\alpha_r}y_r,
x^{\alpha_{r+1}}\right\rangle
\end{eqnarray*}

\item If $\ell=n_{i(\ell)+1}$ then
\begin{eqnarray*}
\mathfrak{a}\OO_{V_{\ell}} & = & 
\left\langle x^{\alpha_{n_1}}y_{n_1},\ldots,x^{\alpha_{n_{i(\ell)}}}y_{n_{i(\ell)}},
x^{\alpha_{n_{i(\ell)+1}}}y_{n_{i(\ell)+1}},
 x^{\alpha_{\ell+1}}y_{\ell+1},\ldots,x^{\alpha_r}y_r\right\rangle \\
\mathfrak{b}\OO_{V_{\ell}} & = &
\left\langle x^{\alpha_{n_1}}y_{n_1},\ldots,x^{\alpha_{n_{i(\ell)}}}y_{n_{i(\ell)}},
x^{\alpha_{n_{i(\ell)+1}}}y_{n_{i(\ell)+1}},
 x^{\alpha_{\ell+1}}y_{\ell+1},\ldots,x^{\alpha_r}y_r,
x^{\alpha_{r+1}}\right\rangle
\end{eqnarray*}

\item We have $V_{\ell}\subset\{x^{\beta_1+\cdots+\beta_{\ell}}\neq 0\}$ and
 $U_j\subset \{y_j\neq 0, x^{\beta_1+\cdots+\beta_{j-1}}\neq 0\}$ for $j=1,\ldots,\ell$.
\end{enumerate}
Now we cover $V_{\ell}$ with two open sets $U_{\ell+1}$ and $V_{\ell+1}$.
% $n_{i(\ell)}<\ell$
% $n_{i(\ell+1)}<\ell+1$

If $\ell+1<n_{i(\ell+1)+1}$ (or $i(\ell+1)=s$) then $\beta_{\ell+1}$ is $\mathbb{Q}$-linearly dependent of $\beta_{n_1},\ldots,\beta_{n_{i(\ell+1)}}$ and by Lemma \ref{LemHiper2} there are two open sets $U_{\ell+1}$ and $V_{\ell+1}$ such that
\begin{eqnarray*}
\mathfrak{a}\OO_{U_{\ell+1}}=\mathfrak{b}\OO_{U_{\ell+1}} &=&
\left\langle x^{\alpha_{n_1}}y_{n_1},\ldots,
x^{\alpha_{n_{i(\ell+1)}}}y_{n_{i(\ell+1)}}, x^{\alpha_{\ell+1}}\right\rangle \\
\mathfrak{a}\OO_{V_{\ell+1}} & = & 
\left\langle x^{\alpha_{n_1}}y_{n_1},\ldots,x^{\alpha_{n_{i(\ell+1)}}}y_{n_{i(\ell+1)}},
 x^{\alpha_{\ell+2}}y_{\ell+2},\ldots,x^{\alpha_r}y_r\right\rangle \\
\mathfrak{b}\OO_{V_{\ell+1}} & = &
\left\langle x^{\alpha_{n_1}}y_{n_1},\ldots,x^{\alpha_{n_{i(\ell+1)}}}y_{n_{i(\ell+1)}},
 x^{\alpha_{\ell+2}}y_{\ell+2},\ldots,x^{\alpha_r}y_r,
x^{\alpha_{r+1}}\right\rangle
\end{eqnarray*}
Note that $U_{\ell+1}\subset\{y_{\ell+1}\neq 0\}$ and since $\beta_{\ell+1}$ is $\mathbb{Q}$-linearly dependent of $\beta_1,\ldots,\beta_{\ell}$ then
$\{x^{\beta_1+\cdots+\beta_{\ell}}\neq 0\}=\{x^{\beta_1+\cdots+\beta_{\ell}+\beta_{\ell+1}}\neq 0\}$ and
$V_{\ell+1}\subset \{x^{\beta_1+\cdots+\beta_{\ell}+\beta_{\ell+1}}\neq 0\}$.
%We also remark that since we are using \ref{LemHiper2} the open set $U_{\ell+1}$ may be empty.

If $\ell+1=n_{i(\ell+1)+1}$ then $\beta_{\ell+1}$ is $\mathbb{Q}$-linearly independent of $\beta_{n_1},\ldots,\beta_{n_{i(\ell+1)}}$. In this case we set
$U_{\ell+1}=\{y_{\ell+1}\neq 0\}\cap V_{\ell}$,
$V_{\ell+1}=\{x^{\beta_{\ell+1}}\neq 0\}\cap V_{\ell}$ and we have
\begin{eqnarray*}
\mathfrak{a}\OO_{U_{\ell+1}}=\mathfrak{b}\OO_{U_{\ell+1}} &=&
\left\langle x^{\alpha_{n_1}}y_{n_1},\ldots,
x^{\alpha_{n_{i(\ell+1)}}}y_{n_{i(\ell+1)}},
 x^{\alpha_{\ell+1}}\right\rangle \\
\mathfrak{a}\OO_{V_{\ell+1}} & = & 
\left\langle x^{\alpha_{n_1}}y_{n_1},\ldots,x^{\alpha_{n_{i(\ell+1)}}}y_{n_{i(\ell+1)}},
x^{\alpha_{n_{i(\ell+1)+1}}}y_{n_{i(\ell+1)+1}}, \right. \\
 & & \left. \vphantom{y_{n_{i(\ell+1)+1}}}  x^{\alpha_{\ell+2}}y_{\ell+2},\ldots,x^{\alpha_r}y_r\right\rangle \\
\mathfrak{b}\OO_{V_{\ell+1}} & = &
\left\langle x^{\alpha_{n_1}}y_{n_1},\ldots,x^{\alpha_{n_{i(\ell+1)}}}y_{n_{i(\ell+1)}},
x^{\alpha_{n_{i(\ell+1)+1}}}y_{n_{i(\ell+1)+1}},\right. \\
 & & {\left. \vphantom{y_{n_{i(\ell+1)+1}}} x^{\alpha_{\ell+2}}y_{\ell+2},\ldots,x^{\alpha_r}y_r,
x^{\alpha_{r+1}}\right\rangle}
\end{eqnarray*}
%Note that, in this case, the open set $U_{\ell+1}\neq\emptyset$.

For the index $\ell=\bar{r}$ we have constructed an open covering of $\mathbb{A}^n=U_1\cup U_2\cup \cdots\cup U_{\bar{r}}\cup V_{\bar{r}}$ satisfying properties (1) to (4) above.

Set $U_{\bar{r}+1}=V_{\bar{r}}$.
If $\bar{r}=r$ then we have done.

If $\bar{r}<r$ then this means that $\beta_1,\ldots,\beta_{\bar{r}},\beta_{\bar{r}+1}$ and $u_1,\ldots,u_{\bar{r}},u_{\bar{r}+1}$ are not compatible.
By Lemma \ref{LemHiper2} we have that $x^{\alpha_{\bar{r}+1}}\in \mathfrak{a}\OO_{U_{\bar{r}+1}}$ and
$x^{\alpha_{\bar{r}+1}}\in \mathfrak{b}\OO_{U_{\bar{r}+1}}$ which implies
\begin{equation*}
\mathfrak{a}\OO_{U_{\bar{r}+1}}=\mathfrak{b}\OO_{U_{\bar{r}+1}}=
\left\langle 
x^{\alpha_{n_1}}y_{n_1},x^{\alpha_{n_2}}y_{n_2},
\ldots, x^{\alpha_{n_s}}y_{n_s},
x^{\alpha_{\bar{r}+1}}\right\rangle
\end{equation*}
as required.

Finally note that open sets $U_1,\ldots,U_{\bar{r}},U_{\bar{r}+1}$ are such that
\begin{gather*}
U_j\subset \{y_j\neq 0,\ x^{\beta_1+\cdots+\beta_{j-1}}\neq 0\},
\qquad j=1,\ldots,\bar{r} \\
U_{\bar{r}+1}\subset \{x^{\beta_1+\cdots+\beta_{\bar{r}}}\neq 0\}
\end{gather*}
For $j=1,\ldots,\bar{r}$, $y_{n_1},\ldots,y_{n_{i(j)}}$ are part of a set of uniformizing parameters of $U_j$ by Lemma~\ref{LemHiperb}.
We can complete $y_{n_1},\ldots,y_{n_{i(j)}}$ it to a set of uniformizing parameters by adding some $x_{\ell}$'s, in such a way that
$\mathfrak{a}\OO_{U_j}=\mathfrak{b}\OO_{U_j}$ are monomial ideals.

Finally for $j=\bar{r}+1$ and, again by Lemma \ref{LemHiperb}, $y_{n_1},\ldots,y_{n_s}$ are part of a set of uniformizing parameters of $U_{\bar{r}+1}$ and both 
$\mathfrak{a}\OO_{U_{\bar{r}+1}}$ and $\mathfrak{b}\OO_{U_{\bar{r}+1}}$ are monomial ideals.
\end{proof}

\begin{Remark} \label{RemXunit}
Note that for $j=1,\ldots,\bar{r},\bar{r}+1$ the open set
$U_j\subset\{x^{\beta_1+\cdots+\beta_{j-1}}\neq 0\}$ and
$\beta_1,\ldots,\beta_{j-1}\in\mathbb{Z}_{\geq 0}^n$.

The construction of the open set $U_j$ following Lemma~\ref{LemGenLocMonom} is such that a variable $x_{\ell}$ is not a unit at some point of $U_j$ if and only if
\begin{equation*}
\beta_{1,\ell}=\cdots=\beta_{j-1,\ell}=0.
\end{equation*}
\end{Remark}

\begin{Lemma} \label{LemNewton2}
Let $s$, $n$ be positive integers with $s\leq n$ and 
let $\alpha_1,\ldots,\alpha_s\in\mathbb{Z}_{\geq 0}^{n-s}$ be such that 
\begin{equation*}
\alpha_1\preceq \alpha_2\preceq \cdots \preceq\alpha_s.
\end{equation*}
Consider $\mathbb{R}^{n}$ with coordinates $(x_1,\ldots,x_{n-s},y_1,\ldots,y_s)$ and the points
\begin{equation*}
\begin{array}{ccc}
P_1=(\alpha_1,1,0,\ldots,0) & = &
(\alpha_{1,1},\ldots,\alpha_{1,n-s},1,0,\ldots,0), \\ 
P_2=(\alpha_2,0,1,\ldots,0) & = &
(\alpha_{2,1},\ldots,\alpha_{2,n-s},0,1,\ldots,0), \\ 
& \vdots \\ 
P_s=(\alpha_s,0,0,\ldots,1) & = &
(\alpha_{s,1},\ldots,\alpha_{s,n-s},0,0,\ldots,1).
\end{array} 
\end{equation*}
Then the Newton polyhedron of $P_1,\ldots,P_s$, say $\mathcal{P}_s$, is defined in $\mathbb{R}_{\geq 0}^{n}$ by the inequalities
\begin{equation} \label{IneqNewton2}
\begin{gathered}
\left.
\begin{aligned}
x_i & \geq \alpha_{1,i} \\
x_i+(\alpha_{2,i}-\alpha_{1,i})y_1 & \geq \alpha_{2,i} \\
x_i+(\alpha_{3,i}-\alpha_{1,i})y_1+(\alpha_{3,i}-\alpha_{2,i})y_2 & \geq \alpha_{3,i} \\
\cdots\cdots \\
x_i+(\alpha_{j,i}-\alpha_{1,i})y_1+\cdots+(\alpha_{j,i}-\alpha_{j-1,i})y_{j-1} & \geq \alpha_{j,i} \\
\cdots\cdots \\
x_i+(\alpha_{s,i}-\alpha_{1,i})y_1+\cdots+(\alpha_{s,i}-\alpha_{s-1,i})y_{s-1} & \geq \alpha_{s,i} \\
\end{aligned}
\right\rbrace
\qquad i=1,\ldots,n-s,\\
\text{and} \qquad
y_1+y_2+\cdots+y_s \geq 1
\end{gathered}
\end{equation}
\end{Lemma}

\begin{proof}
We want to proof that a point $(x_1,\ldots,x_{n-s},y_1,\ldots,y_s)\in\mathbb{R}_{\geq 0}^{n}$ is in $\mathcal{P}_s$ if and only if $(x,y)$ satisfies inequalities (\ref{IneqNewton2}).

By Remark~\ref{RemNewton} it is equivalent to show that $(x,y)\in\mathbb{R}_{\geq 0}^{n}$ satisfies inequalities (\ref{IneqNewton2}) if and only if  there are real numbers $t_1,\ldots,t_s$, with $0\leq t_i\leq 1$ for $i=1,\ldots,s$, $t_1+t_2+\cdots+t_s=1$ and such that
\begin{equation} \label{IneqMatrNewton2}
\left(\begin{array}{cccc}
\alpha_{1,1} & \alpha_{2,1} & \cdots & \alpha_{s,1} \\
\vdots & \vdots &  & \vdots \\
\alpha_{1,n-s} & \alpha_{2,n-s} & \cdots & \alpha_{s,n-s} \\
1 & 0 & \cdots & 0 \\
0 & 1 & \cdots & 0 \\
\vdots & \vdots & \ddots & \vdots \\
0 & 0 & \cdots & 1
\end{array}\right)
\left(\begin{array}{c}
t_1 \\ t_2 \\ \vdots \\ t_s
\end{array}\right)
\preceq
\left(\begin{array}{c}
x_1 \\ \vdots \\ x_{n-s} \\ y_1 \\ \vdots \\ y_s
\end{array}\right)
\end{equation}
Assume that there exist $t_1,\ldots,t_s$ satisfying (\ref{IneqMatrNewton2}).
For $i=1,\ldots,n-s$ we have
\begin{equation*}
x_i\geq \alpha_{1,i} t_1+\alpha_{2,i} t_2+\cdots+\alpha_{s,i}t_s\geq
\alpha_{1,i}(t_1+\cdots+t_s)=\alpha_{1,i}
\end{equation*}
and we obtain the first inequality in (\ref{IneqNewton2}).

Note that from (\ref{IneqMatrNewton2}) we have $y_i\geq t_i$ for $i=1,\ldots,s$.
If $1<j\leq s$ we have 
\begin{multline*}
x_i\geq \alpha_{1,i} t_1+\alpha_{2,i} t_2+\cdots+\alpha_{s,i}t_s\geq 
\alpha_{1,i}t_1+\cdots\alpha_{j-1,i}t_{j-1}+\alpha_{j,i}(t_j+\cdots+t_s)= \\
\alpha_{1,i}t_1+\cdots+\alpha_{j-1,i}t_{j-1}+\alpha_{j,i}(1-t_1-\cdots-t_{j-1})= \\
(\alpha_{1,i}-\alpha_{j,i})t_1+\cdots+(\alpha_{j-1,i}-\alpha_{j,i})t_{j-1}+\alpha_{j,i}
\end{multline*}
\begin{equation*}
x_i+(\alpha_{j,i}-\alpha_{1,i})t_1+\cdots+(\alpha_{j,i}-\alpha_{j-1,i})t_{j-1}\geq\alpha_{j,i}
\end{equation*}
then 
\begin{multline*}
x_i+(\alpha_{j,i}-\alpha_{1,i})y_1+\cdots+(\alpha_{j,i}-\alpha_{j-1,i})y_{j-1}\geq  \\ x_i+(\alpha_{j,i}-\alpha_{1,i})t_1+\cdots+(\alpha_{j,i}-\alpha_{j-1,i})t_{j-1}\geq\alpha_{j,i}
\end{multline*}
and it follows the $j$-th inequality in (\ref{IneqNewton2}).

The last inequality in (\ref{IneqNewton2}) follows from the fact $y_i\geq t_i$:
\begin{equation*}
y_1+\cdots+y_s\geq t_1+\cdots+t_s=1
\end{equation*}

Reciprocally, assume $(x,y)$ satisfies (\ref{IneqNewton2}).
If $y_1\geq 1$ then set $t_1=1$, $t_2=\cdots=t_s=0$ and we have (\ref{IneqMatrNewton2}) since
$x_i\geq\alpha_{1,i}$ for all $i=1,\ldots,n-s$ by hypothesis.

If $y_1<1$. Note that $y_1+\cdots+y_s\geq 1$ by hypothesis. Let $j\geq 1$ be the minimum index such that $y_1+y_2+\cdots+y_{j}\geq 1$. Note that $y_1+y_2+\cdots+y_{j-1}<1$.
Set
\begin{gather*}
t_1=y_1, \quad t_2=y_2, \quad\cdots\quad, t_{j-1}=y_{j-1},\quad t_j=1-(t_1+t_2+\cdots+t_{j-1}) \\
t_{j+1}=\cdots=t_s=0.
\end{gather*}
Note that $y_i\geq t_i$ for $i=1,\ldots,s$.
We only have to check the first $n-s$ rows in (\ref{IneqMatrNewton2}). From the $j$-th inequality of (\ref{IneqNewton2})
\begin{multline*}
x_i\geq \alpha_{j,i}
+(\alpha_{1,i}-\alpha_{j,i})y_1+\cdots+(\alpha_{j-1,i}-\alpha_{j,i})y_{j-1} = \\
\alpha_{j,i}+(\alpha_{1,i}-\alpha_{j,i})t_1+\cdots+(\alpha_{j-1,i}-\alpha_{j,i})t_{j-1}= \\
\alpha_{1,i}t_1+\cdots+\alpha_{j-1,i}t_{j-1}+\alpha_{j,i}(1-(t_1+\cdots+t_{j-1}))=
\alpha_{1,i}t_1+\cdots+\alpha_{j-1,i}t_{j-1}+\alpha_{j,i}t_j
\end{multline*}
for $i=1,\ldots,n-s$ and we obtain the required inequalities of (\ref{IneqMatrNewton2}).
\end{proof}

\begin{Lemma} \label{LemNewton3}
Let $s$, $n$ be positive integers with $s\leq n$ and 
let $\alpha_1,\ldots,\alpha_s,\alpha_{s+1}\in\mathbb{Z}_{\geq 0}^{n-s}$ be such that 
\begin{equation*}
\alpha_1\preceq \alpha_2\preceq \cdots \preceq\alpha_s \preceq\alpha_{s+1}.
\end{equation*}
Consider $\mathbb{R}^{n}$ with coordinates $(x_1,\ldots,x_{n-s},y_1,\ldots,y_s)$ and the points
\begin{equation*}
\begin{array}{rcl}
P_1=(\alpha_1,1,0,\ldots,0) & = &
(\alpha_{1,1},\ldots,\alpha_{1,n-s},1,0,\ldots,0), \\ 
P_2=(\alpha_2,0,1,\ldots,0) & = &
(\alpha_{2,1},\ldots,\alpha_{2,n-s},0,1,\ldots,0), \\ 
& \vdots \\ 
P_s=(\alpha_s,0,0,\ldots,1) & = &
(\alpha_{s,1},\ldots,\alpha_{s,n-s},0,0,\ldots,1), \\
P_{s+1}=(\alpha_{s+1},0,0,\ldots,0) & = &
(\alpha_{s+1,1},\ldots,\alpha_{s+1,n-s},0,0,\ldots,0), \\
\end{array} 
\end{equation*}
Then the Newton polyhedron of $P_1,P_2,\ldots,P_s,P_{s+1}$, say $\mathcal{P}'_s$,
is defined in $\mathbb{R}_{\geq 0}^{n}$ by the inequalities
\begin{equation} \label{IneqNewton3}
\left.
\begin{aligned}
x_i & \geq \alpha_{1,i} \\
x_i+(\alpha_{2,i}-\alpha_{1,i})y_1 & \geq \alpha_{2,i} \\
x_i+(\alpha_{3,i}-\alpha_{1,i})y_1+(\alpha_{3,i}-\alpha_{2,i})y_2 & \geq \alpha_{3,i} \\
\cdots\cdots \\
x_i+(\alpha_{j,i}-\alpha_{1,i})y_1+\cdots+(\alpha_{j,i}-\alpha_{j-1,i})y_{j-1} & \geq \alpha_{j,i} \\
\cdots\cdots \\
x_i+(\alpha_{s,i}-\alpha_{1,i})y_1+\cdots+(\alpha_{s,i}-\alpha_{s-1,i})y_{s-1} & \geq \alpha_{s,i} \\
x_i+(\alpha_{s+1,i}-\alpha_{1,i})y_1+\cdots+%(\alpha_{s+1,i}-\alpha_{s-1,i})y_{s-1}+
(\alpha_{s+1,i}-\alpha_{s,i})y_{s} & \geq \alpha_{s+1,i}
\end{aligned}
\right\rbrace
\qquad i=1,\ldots,n-s.
\end{equation}
\end{Lemma}

\begin{proof}
As in the proof of Lemma \ref{LemNewton2}, it is enough to show that a point
$$(x_1,\ldots,x_{n-s},y_1,\ldots,y_s)\in\mathbb{R}_{\geq 0}^{n}$$
satisfies inequalities (\ref{IneqNewton3}) if and only if  there are real numbers $t_1,\ldots,t_s,t_{s+1}$, with $0\leq t_i\leq 1$ for $i=1,\ldots,s,s+1$, $t_1+t_2+\cdots+t_s+t_{s+1}=1$ and such that
\begin{equation} \label{IneqMatrNewton3}
\left(\begin{array}{ccccc}
\alpha_{1,1} & \alpha_{2,1} & \cdots & \alpha_{s,1} & \alpha_{s+1,1} \\
\vdots & \vdots &  & \vdots & \vdots \\
\alpha_{1,n-s} & \alpha_{2,n-s} & \cdots & \alpha_{s,n-s} & \alpha_{s+1,n-s} \\
1 & 0 & \cdots & 0 & 0 \\
0 & 1 & \cdots & 0 & 0 \\
\vdots & \vdots & \ddots & \vdots & \vdots \\
0 & 0 & \cdots & 1 & 0
\end{array}\right)
\left(\begin{array}{c}
t_1 \\ t_2 \\ \vdots \\ t_s \\ t_{s+1}
\end{array}\right)
\preceq
\left(\begin{array}{c}
x_1 \\ \vdots \\ x_{n-s} \\ y_1 \\ \vdots \\ y_s
\end{array}\right)
\end{equation}
For $i=1,\ldots,n-s$ we have
\begin{equation*}
x_i\geq \alpha_{1,i} t_1+\alpha_{2,i} t_2+\cdots+\alpha_{s,i}t_s+\alpha_{s+1,i}t_{s+1}\geq
\alpha_{1,i}(t_1+\cdots+t_s+t_{s+1})=\alpha_{1,i}
\end{equation*}
and we obtain the first inequality in (\ref{IneqNewton3}).

Note that from (\ref{IneqMatrNewton3}) we have $y_i\geq t_i$ for $i=1,\ldots,s$.
If $1<j\leq s+1$ we have 
\begin{multline*}
x_i\geq \alpha_{1,i} t_1+\alpha_{2,i} t_2+\cdots+\alpha_{s,i}t_s+\alpha_{s+1,i}t_{s+1}\geq \\
\alpha_{1,i}t_1+\cdots\alpha_{j-1,i}t_{j-1}+\alpha_{j,i}(t_j+\cdots+t_s+t_{s+1})= \\
\alpha_{1,i}t_1+\cdots+\alpha_{j-1,i}t_{j-1}+\alpha_{j,i}(1-t_1-\cdots-t_{j-1})= \\
(\alpha_{1,i}-\alpha_{j,i})t_1+\cdots+(\alpha_{j-1,i}-\alpha_{j,i})t_{j-1}+\alpha_{j,i}
\end{multline*}
\begin{equation*}
x_i+(\alpha_{j,i}-\alpha_{1,i})t_1+\cdots+(\alpha_{j,i}-\alpha_{j-1,i})t_{j-1}\geq\alpha_{j,i}
\end{equation*}
then 
\begin{multline*}
x_i+(\alpha_{j,i}-\alpha_{1,i})y_1+\cdots+(\alpha_{j,i}-\alpha_{j-1,i})y_{j-1}\geq  \\ x_i+(\alpha_{j,i}-\alpha_{1,i})t_1+\cdots+(\alpha_{j,i}-\alpha_{j-1,i})t_{j-1}\geq\alpha_{j,i}
\end{multline*}
and it follows the $j$-th inequality in (\ref{IneqNewton3}).

Reciprocally, assume $(x,y)$ satisfies (\ref{IneqNewton3}).
If $y_1\geq 1$ then set $t_1=1$, $t_2=\cdots=t_s=t_{s+1}=0$ and we have (\ref{IneqMatrNewton3}) since
$x_i\geq\alpha_{1,i}$ for all $i=1,\ldots,n-s$ by hypothesis.

If $y_1<1$. Let $j\geq 1$ be the maximum index such that $y_1+y_2+\cdots+y_{j-1}<1$.
Note that if $y_1+\cdots+y_s<1$ then $j=s+1$.
Now set
\begin{gather*}
t_1=y_1, \quad t_2=y_2, \quad\cdots\quad, t_{j-1}=y_{j-1},\quad t_j=1-(t_1+t_2+\cdots+t_{j-1}) \\
t_{j+1}=\cdots=t_s=t_{s+1}=0.
\end{gather*}
For $i=1,\ldots,s$ we have $y_i\geq t_i$.
We only have to check the first $n-s$ rows in (\ref{IneqMatrNewton3}). From the $j$-th inequality of (\ref{IneqNewton3})
\begin{multline*}
x_i\geq \alpha_{j,i}
+(\alpha_{1,i}-\alpha_{j,i})y_1+\cdots+(\alpha_{j-1,i}-\alpha_{j,i})y_{j-1} = \\
\alpha_{j,i}+(\alpha_{1,i}-\alpha_{j,i})t_1+\cdots+(\alpha_{j-1,i}-\alpha_{j,i})t_{j-1}= \\
\alpha_{1,i}t_1+\cdots+\alpha_{j-1,i}t_{j-1}+\alpha_{j,i}(1-(t_1+\cdots+t_{j-1}))=\\
\alpha_{1,i}t_1+\cdots+\alpha_{j-1,i}t_{j-1}+\alpha_{j,i}t_j
\end{multline*}
for $i=1,\ldots,n-s$ and we obtain all inequalities in (\ref{IneqMatrNewton2}).
\end{proof}

\begin{Definition} \label{DefPsi}
Let $c,\alpha_1,\ldots,\alpha_s\in\mathbb{Z}_{\geq 0}^m$ be vectors such that
\begin{equation*}
\alpha_1\preceq\alpha_2\preceq\cdots\preceq\alpha_s
\end{equation*}
For every $\ell=1,\ldots,m$ we set
\begin{gather*}
\Psi_{\ell}(c,\alpha_1,\ldots,\alpha_s)=
\frac{c_{\ell}+1+(\alpha_{s,\ell}-\alpha_{1,\ell})+(\alpha_{s,\ell}-\alpha_{2,\ell})+\cdots
+(\alpha_{s,\ell}-\alpha_{s-1,\ell})}{\alpha_{s,\ell}} \\
\widetilde{\Psi}_{\ell}(c,\alpha_1,\ldots,\alpha_s)=
\min\left\lbrace
\Psi_{\ell}(c,\alpha_1,\ldots,\alpha_i) \mid i=1,\ldots,s
\right\rbrace
\end{gather*}
where we set $\Psi_{\ell}(c,\alpha_1,\ldots,\alpha_s)=\infty$ if $\alpha_{s,\ell}=0$.
\end{Definition}

\begin{Lemma} \label{LemPsiDecre}
Let $c,\alpha_1,\ldots,\alpha_s,\delta,\delta'\in\mathbb{Z}_{\geq 0}^m$ be vectors such that
\begin{equation*}
\alpha_1\preceq\alpha_2\preceq\cdots\preceq\alpha_s\preceq \delta\preceq\delta'
\end{equation*}
Then for every $\ell=1,\ldots,m$ we have
\begin{equation*}
\widetilde{\Psi}_{\ell}(c,\alpha_1,\ldots,\alpha_s,\delta)\geq
\widetilde{\Psi}_{\ell}(c,\alpha_1,\ldots,\alpha_s,\delta')
\end{equation*}
\end{Lemma}

\begin{proof}
Note that
\begin{gather*}
\widetilde{\Psi}_{\ell}(c,\alpha_1,\ldots,\alpha_s,\delta)=
\min\left\lbrace
\widetilde{\Psi}_{\ell}(c,\alpha_1,\ldots,\alpha_s),
\Psi_{\ell}(c,\alpha_1,\ldots,\alpha_s,\delta)
\right\rbrace, \\
\widetilde{\Psi}_{\ell}(c,\alpha_1,\ldots,\alpha_s,\delta')=
\min\left\lbrace
\widetilde{\Psi}_{\ell}(c,\alpha_1,\ldots,\alpha_s),
\Psi_{\ell}(c,\alpha_1,\ldots,\alpha_s,\delta')
\right\rbrace,
\end{gather*}
and it is enough to prove that
\begin{multline*}
\min\left\lbrace
\widetilde{\Psi}_{\ell}(c,\alpha_1,\ldots,\alpha_s),
\Psi_{\ell}(c,\alpha_1,\ldots,\alpha_s,\delta')
\right\rbrace= \\
\min\left\lbrace
\widetilde{\Psi}_{\ell}(c,\alpha_1,\ldots,\alpha_s),
\Psi_{\ell}(c,\alpha_1,\ldots,\alpha_s,\delta),
\Psi_{\ell}(c,\alpha_1,\ldots,\alpha_s,\delta')
\right\rbrace
\end{multline*}
Note that
\begin{equation*}
\Psi_{\ell}(c,\alpha_1,\ldots,\alpha_s,\delta)=
\frac{c_{\ell}+1+(\alpha_{s,\ell}-\alpha_{1,\ell})+\cdots+(\alpha_{s,\ell}-\alpha_{s-1,\ell})+
s(\delta_{\ell}-\alpha_{s,\ell})
}{\alpha_{s,\ell}+(\delta_{\ell}-\alpha_{s,\ell})}
\end{equation*}
If $\Psi_{\ell}(c,\alpha_1,\ldots,\alpha_s)\leq s$ then
\begin{gather*}
\Psi_{\ell}(c,\alpha_1,\ldots,\alpha_s)\leq
\Psi_{\ell}(c,\alpha_1,\ldots,\alpha_s,\delta), \\
\Psi_{\ell}(c,\alpha_1,\ldots,\alpha_s)\leq
\Psi_{\ell}(c,\alpha_1,\ldots,\alpha_s,\delta'),
\end{gather*}
and
\begin{multline*}
\min\left\lbrace
\widetilde{\Psi}_{\ell}(c,\alpha_1,\ldots,\alpha_s),
\Psi_{\ell}(c,\alpha_1,\ldots,\alpha_s,\delta')
\right\rbrace= \\
\min\left\lbrace
\widetilde{\Psi}_{\ell}(c,\alpha_1,\ldots,\alpha_s),
\Psi_{\ell}(c,\alpha_1,\ldots,\alpha_s,\delta),
\Psi_{\ell}(c,\alpha_1,\ldots,\alpha_s,\delta')
\right\rbrace=\\
\widetilde{\Psi}_{\ell}(c,\alpha_1,\ldots,\alpha_s)
\end{multline*}
If $\Psi_{\ell}(c,\alpha_1,\ldots,\alpha_s)> s$ then
\begin{equation*}
\Psi_{\ell}(c,\alpha_1,\ldots,\alpha_s)>
\Psi_{\ell}(c,\alpha_1,\ldots,\alpha_s,\delta)>
\Psi_{\ell}(c,\alpha_1,\ldots,\alpha_s,\delta')
\end{equation*}
% porque la función f(\delta)=\frac{a+s\delta}{b+\delta} es decreciente si a>bs
and
\begin{multline*}
\min\left\lbrace
\widetilde{\Psi}_{\ell}(c,\alpha_1,\ldots,\alpha_s),
\Psi_{\ell}(c,\alpha_1,\ldots,\alpha_s,\delta')
\right\rbrace= \\
\min\left\lbrace
\widetilde{\Psi}_{\ell}(c,\alpha_1,\ldots,\alpha_s),
\Psi_{\ell}(c,\alpha_1,\ldots,\alpha_s,\delta),
\Psi_{\ell}(c,\alpha_1,\ldots,\alpha_s,\delta')
\right\rbrace=\\
\min\left\lbrace
\widetilde{\Psi}_{\ell}(c,\alpha_1,\ldots,\alpha_{s-1}),
\Psi_{\ell}(c,\alpha_1,\ldots,\alpha_s,\delta')
\right\rbrace
\end{multline*}
\end{proof}

\begin{Lemma} \label{LemMonPsi2}
Let $\alpha_1,\ldots,\alpha_s\in\mathbb{Z}_{\geq 0}^{n-s}$ be such that 
\begin{equation*}
\alpha_1\preceq \alpha_2\preceq \cdots \preceq\alpha_s.
\end{equation*}
Let $U$ be a smooth affine variety of dimension $n$ and let $x_1,\ldots,x_{n-s},y_1,\ldots,y_s$ be a set of uniformizing parameters.
Consider the monomial ideal in $U$ 
\begin{equation*}
\mathfrak{a}=\left\langle
x^{\alpha_1}y_1,x^{\alpha_2}y_2,\ldots,x^{\alpha_s}y_s
\right\rangle
\end{equation*}
Let $\Delta$ be the divisor in $U$ defined by the monomial $x^{c}$ where $c\in\mathbb{Z}_{\geq 0}^{n-s}$.
We are asuming that all variables appearing in the previous expressions are not units in $\OO_U$,
which means that if some $\alpha_{i,\ell}\neq 0$ then $x_{\ell}$ is not a unit in $\OO_{U}(U)$.

The log-canonical threshold $\lct(U,\Delta,\mathfrak{a})$ is
\begin{equation*}
\lct(U,\Delta,\mathfrak{a})=
\min\left\lbrace s,\widetilde{\Psi}_{\ell}(c,\alpha_1,\ldots,\alpha_s) \mid
\ell=1,\ldots,n-s \right\rbrace
\end{equation*}
\end{Lemma}

\begin{proof}
Note first that the Newton polyhedron $\mathcal{P}$ of the ideal $\mathfrak{a}$ is the one of
Lemma~\ref{LemNewton2}. Set $(c,0)=(c_1,\ldots,c_{n-s},0,\ldots,0)\in\mathbb{Z}_{\geq 0}^{n}$,
by Lemma~\ref{LemHowald} the log-canonical threshold
$\lct(U,\Delta,\mathfrak{a})=\lambda$ where $\lambda$ is the maximum value such that
$\dfrac{1}{\lambda}((c,0)+\mathbf{1})\in\mathcal{P}$.
Using the inequalities of Lemma \ref{LemNewton2} for the point $\dfrac{1}{\lambda}((c,0)+\mathbf{1})$
we have
\begin{equation*}
\begin{gathered}
\left.
\begin{aligned}
\frac{c_{\ell}+1}{\lambda} & \geq \alpha_{1,\ell} \\
\frac{c_{\ell}+1}{\lambda}+(\alpha_{2,i}-\alpha_{1,i})\frac{1}{\lambda} & \geq \alpha_{2,\ell} \\
\frac{c_{\ell}+1}{\lambda}+(\alpha_{3,i}-\alpha_{1,i})\frac{1}{\lambda}+(\alpha_{3,i}-\alpha_{2,i})\frac{1}{\lambda} & \geq \alpha_{3,\ell} \\
\cdots\cdots \\
\frac{c_{\ell}+1}{\lambda}+(\alpha_{j,i}-\alpha_{1,i})\frac{1}{\lambda}+\cdots+(\alpha_{j,i}-\alpha_{j-1,i})\frac{1}{\lambda} & \geq \alpha_{j,\ell} \\
\cdots\cdots \\
\frac{c_{\ell}+1}{\lambda}+(\alpha_{s,i}-\alpha_{1,i})\frac{1}{\lambda}+\cdots+(\alpha_{s,i}-\alpha_{s-1,i})\frac{1}{\lambda} & \geq \alpha_{s,\ell} \\
\end{aligned}
\right\rbrace
\qquad \ell=1,\ldots,n-s,\\
\text{and} \qquad
\frac{s}{\lambda} \geq 1,
\end{gathered}
\end{equation*}
and we obtain the result.
\end{proof}

\begin{Lemma} \label{LemMonPsi3}
Let $\alpha_1,\ldots,\alpha_s,\alpha_{s+1}\in\mathbb{Z}_{\geq 0}^{n-s}$ be such that 
\begin{equation*}
\alpha_1\preceq \alpha_2\preceq \cdots \preceq\alpha_s\preceq\alpha_{s+1}.
\end{equation*}
Let $U$ be a smooth affine variety of dimension $n$ and let $x_1,\ldots,x_{n-s},y_1,\ldots,y_s$ be a set of uniformizing parameters.
Consider the monomial ideal in $U$ 
\begin{equation*}
\mathfrak{a}=\left\langle
x^{\alpha_1}y_1,x^{\alpha_2}y_2,\ldots,x^{\alpha_s}y_s, x^{\alpha_{s+1}}
\right\rangle
\end{equation*}
Let $\Delta$ be the divisor in $U$ defined by the monomial $x^{c}$ where $c\in\mathbb{Z}_{\geq 0}^{n-s}$.
The log-canonical threshold $\lct(U,\Delta,\mathfrak{a})$ is
\begin{equation*}
\lct(U,\Delta,\mathfrak{a})=
\min\left\lbrace \widetilde{\Psi}_{\ell}(c,\alpha_1,\ldots,\alpha_s,\alpha_{s+1}) \mid
\ell=1,\ldots,n-s \right\rbrace
\end{equation*}
\end{Lemma}

\begin{proof}
It is analogous to the proof of Lemma \ref{LemMonPsi2}, using Lemma~\ref{LemNewton3}.
\end{proof}

Recall that after a pseudo-resolution we will find an ideal as in (\ref{EqCaso1}) or (\ref{EqCaso2}), see also Lemma~\ref{LemGenLocMonom}.
Then by Proposition \ref{PropLctCover} it will be enough to compute the log-canonical threshold for the ideals in the following proposition.
\begin{Proposition} \label{PropLctAfin}
Let $U$ be a smooth toric affine variety of dimension $n$. We known
that $U\cong\Spec(k[x_{1},\ldots,x_{n}]_{h})$, where $h$ is a product 
of some variables $x_{i}$.
Let $\beta_1,\ldots,\beta_r\in\mathbb{Z}_{\geq 0}^n$ be vectors and $u_1,\ldots,u_r\in k^{\ast}$ be units.

Let $\alpha_1,\ldots,\alpha_r,\alpha_{r+1}\in\mathbb{Z}_{\geq 0}^n$ be vectors such that
\begin{equation*}
\alpha_1\preceq\alpha_2\preceq\cdots\preceq \alpha_r\preceq\alpha_{r+1}
\end{equation*}
Set the ideals in $\OO_U$:
\begin{gather*}
\mathfrak{a}=\left\langle
x^{\alpha_1}(1-u_1x^{\beta_1}),\ldots, x^{\alpha_r}(1-u_rx^{\beta_r})
\right\rangle, \\
\mathfrak{b}=\left\langle
x^{\alpha_1}(1-u_1x^{\beta_1}),\ldots x^{\alpha_r}(1-u_rx^{\beta_r}),
x^{\alpha_{r+1}} \right\rangle.
\end{gather*}
We assume that monomials $x^{\alpha_i}$ do not depend on variables $x_j$ appearing in $h$, since these $x_j$ are units in $U$.
This means that $\alpha_{i,j}=0$ for $i=1,\ldots,r+1$ if $x_j$ divides $h$.

Let $\mathcal{N}(\beta,u)$ be the sequence defined in Remark \ref{RemDefni}
\begin{equation*}
\mathcal{N}(\beta,u)=\{1=n_1<n_2<\cdots< n_s\leq \bar{r}\leq r\}.
\end{equation*}
Let $\Delta$ be the divisor defined by the monomial $x^{c}$, $c\in\mathbb{Z}_{\geq 0}^n$.
The log-canonical thresholds of ideals $\mathfrak{b}$ and $\mathfrak{a}$ can be computed as follows:
\begin{enumerate}
\item If $\bar{r}<r$ then $\lct(U,\Delta,\mathfrak{a})=\lct(U,\Delta,\mathfrak{b})$ and
\begin{multline*}
\lct(U,\Delta,\mathfrak{a})=\lct(U,\Delta,\mathfrak{b})=\\
\min\left\lbrace \vphantom{\frac{a}{b}}
\left\lbrace
\widetilde{\Psi}_{\ell}(c,\alpha_{n_1},\ldots,\alpha_{n_i}) \mid i=1,\ldots,s,\quad 
\ell \text{ such that }\beta_{j,\ell}=0\ \forall j<n_i
\right\rbrace
\cup\right. \\
\left. \vphantom{\frac{a}{b}} \cup \left\lbrace 
\widetilde{\Psi}_{\ell}(c,\alpha_{n_1},\ldots,\alpha_{n_s},\alpha_{\bar{r}+1}) \mid 
\ell \text{ such that }\beta_{j,\ell}=0\ 
\forall j\leq\bar{r}
\right\rbrace
\right\rbrace.
\end{multline*}

\item If $\bar{r}=r$ then
\begin{equation*}
\lct(U,\Delta,\mathfrak{a})=
\min\left\lbrace
s,
\widetilde{\Psi}_{\ell}(c,\alpha_{n_1},\ldots,\alpha_{n_i}) \mid i=1,\ldots,s,\quad
\ell \text{ such that }\beta_{j,\ell}=0\ \forall j<n_i
\right\rbrace,
\end{equation*}
\begin{multline*}
\lct(U,\Delta,\mathfrak{b})=\\
\min\left\lbrace \vphantom{\frac{a}{b}}
\left\lbrace
\widetilde{\Psi}_{\ell}(c,\alpha_{n_1},\ldots,\alpha_{n_i}) \mid i=1,\ldots,s,\quad
\ell \text{ such that }\beta_{j,\ell}=0\ \forall j<n_i
\right\rbrace
\cup\right. \\
\left. \vphantom{\frac{a}{b}} \cup \left\lbrace 
\widetilde{\Psi}_{\ell}(c,\alpha_{n_1},\ldots,\alpha_{n_s},\alpha_{r+1}) \mid
\ell \text{ such that } \beta_{j,\ell}=0\ 
\forall j\leq r
\right\rbrace
\right\rbrace.
\end{multline*}
\end{enumerate}
\end{Proposition}

\begin{proof}
Let $U_1\cup\cdots\cup U_{\bar{r}}\cup U_{\bar{r}+1}$ be the open covering constructed in Lemma \ref{LemGenLocMonom}. 
Note that
\begin{gather*}
\lct(U,\Delta,\mathfrak{a})=\min\left\lbrace \lct(U_j,\Delta,\mathfrak{a}) \mid j=1,\ldots,\bar{r},\bar{r}+1 \right\rbrace,
\\
\lct(U,\Delta,\mathfrak{b})=\min\left\lbrace \lct(U_j,\Delta,\mathfrak{b}) \mid j=1,\ldots,\bar{r},\bar{r}+1 \right\rbrace.
\end{gather*}

Set $y_i=(1-u_i x^{\beta_i})$ for $i=1,\ldots,r$.

Let $j=1,\ldots,\bar{r}$. By Lemma \ref{LemGenLocMonom} we have $U_j\subset\{y_j\neq 0, x^{\beta_{1}+\cdots+\beta_{j-1}}\neq 0\}$ and
\begin{equation*}
\mathfrak{a}\OO_{U_j}=\mathfrak{b}\OO_{U_j}=
\left\langle
x^{\alpha_{n_1}}y_{n_1},x^{\alpha_{n_2}}y_{n_2},\ldots,x^{\alpha_{n_{i(j)}}}y_{n_{i(j)}},x^{\alpha_j}
\right\rangle
\end{equation*}
Lemma~\ref{LemGenLocMonom} also says that $y_{n_1},y_{n_2},\ldots,y_{n_{i(j)}}$ are part of a set of uniformizing parameters, which may be completed to a set of uniformizing parameters with suitable $x_{\ell}$.
Now we use formula in Lemma~\ref{LemMonPsi3} for $U_j$, where we forget indexes $\ell$ such that $x_{\ell}$ is a unit in $\OO_{U_j}$,
by Remark~\ref{RemXunit} we have
\begin{multline*}
\lct(U_j,\Delta,\mathfrak{a})=
\lct(U_j,\Delta,\mathfrak{b})= \\
\min\left\lbrace
\widetilde{\Psi}_{\ell}(c,\alpha_{n_1},\ldots,\alpha_{n_{i(j)}},\alpha_j) \mid
\ell \text{ such that }\beta_{1,\ell}=\cdots=\beta_{j-1,\ell}=0
\right\rbrace.
\end{multline*}
For some $i=1,\ldots,s-1$, consider the union of the open sets $U_j$ with $n_i<j\leq n_{i+1}$,
we claim that
\begin{multline*}
\lct(U_{n_i+1}\cup\cdots\cup U_{n_{i+1}},\Delta,\mathfrak{a})=
\lct(U_{n_i+1}\cup\cdots\cup U_{n_{i+1}},\Delta,\mathfrak{b})= \\
\lct(U_{n_{i+1}},\Delta,\mathfrak{a})=
\lct(U_{n_{i+1}},\Delta,\mathfrak{b})= \\
\min\left\lbrace
\widetilde{\Psi}_{\ell}(c,\alpha_{n_1},\ldots,\alpha_{n_{i}},\alpha_{n_{i+1}}) \mid
\ell \text{ such that }\beta_{1,\ell}=\cdots=\beta_{n_{i+1}-1,\ell}=0
\right\rbrace.
\end{multline*}
Note that this is consistent with the fact that for $n_i<j<n_{i+1}$ some of the open sets $U_j$ may be empty, but we always have $U_{n_i}\neq\emptyset$ for $i=1,\ldots,s$.

Our claim is a direct consequence of Lemma~\ref{LemPsiDecre} and the fact that 
$\beta_{n_i+1},\ldots,\beta_{n_{i+1}-1}$ are $\mathbb{Q}$-linearly dependent of 
$\beta_{n_1},\beta_{n_2},\ldots,\beta_{n_i}$.

If follows from the claim that
\begin{multline} \label{EqU1r}
\lct(U_{1}\cup\cdots\cup U_{\bar{r}},\Delta,\mathfrak{a})=
\lct(U_{1}\cup\cdots\cup U_{\bar{r}},\Delta,\mathfrak{b})= \\
\min\left\lbrace
\widetilde{\Psi}_{\ell}(c,\alpha_{n_1},\ldots,\alpha_{n_{i}}) \mid
i=1,\ldots,s;\quad
\ell \text{ such that }\beta_{1,\ell}=\cdots=\beta_{n_{i}-1,\ell}=0
\right\rbrace.
\end{multline}

Now we consider the open set $U_{\bar{r}}$.
If $\bar{r}<r$ then by Lemma \ref{LemGenLocMonom}
\begin{equation*}
\mathfrak{a}\OO_{U_{\bar{r}+1}}=\mathfrak{b}\OO_{U_{\bar{r}+1}}=
\left\langle
x^{\alpha_{n_1}}y_{n_1},x^{\alpha_{n_2}}y_{n_2},\ldots,x^{\alpha_{n_{s}}}y_{n_{s}},
x^{\alpha_{\bar{r}+1}}
\right\rangle
\end{equation*}
and as above, it follows from formula in Lemma \ref{LemMonPsi3} and Remark \ref{RemXunit} that
\begin{multline} \label{EqUrr2}
\lct(U_{\bar{r}+1},\Delta,\mathfrak{a})=
\lct(U_{\bar{r}+1},\Delta,\mathfrak{b})= \\
\min\left\lbrace
\widetilde{\Psi}_{\ell}(c,\alpha_{n_1},\ldots,\alpha_{n_{s}},\alpha_{\bar{r}+1}) \mid
\ell \text{ such that }\beta_{1,\ell}=\cdots=\beta_{\bar{r},\ell}=0
\right\rbrace.
\end{multline}

If $\bar{r}=r$ then by Lemma \ref{LemGenLocMonom}
\begin{gather*}
\mathfrak{a}\OO_{U_{\bar{r}+1}}=
\left\langle 
x^{\alpha_{n_1}}y_{n_1},x^{\alpha_{n_2}}y_{n_2},
\ldots, x^{\alpha_{n_s}}y_{n_s}\right\rangle, \\
\mathfrak{b}\OO_{U_{\bar{r}+1}}=
\left\langle 
x^{\alpha_{n_1}}y_{n_1},x^{\alpha_{n_2}}y_{n_2},
\ldots, x^{\alpha_{n_s}}y_{n_s},
x^{\alpha_{r+1}}\right\rangle.
\end{gather*}
Using, respectively, formulas in Lemmas \ref{LemMonPsi2} and \ref{LemMonPsi3} together with Remark \ref{RemXunit} we have
\begin{multline} \label{EqUrr3}
\lct(U_{\bar{r}+1},\Delta,\mathfrak{a})= \\
\min\left\lbrace s,
\widetilde{\Psi}_{\ell}(c,\alpha_{n_1},\ldots,\alpha_{n_{s}}) \mid
\ell \text{ such that }\beta_{1,\ell}=\cdots=\beta_{\bar{r},\ell}=0
\right\rbrace.
\end{multline}
\begin{multline} \label{EqUrr4}
\lct(U_{\bar{r}+1},\Delta,\mathfrak{b})= \\
\min\left\lbrace
\widetilde{\Psi}_{\ell}(c,\alpha_{n_1},\ldots,\alpha_{n_{s}},\alpha_{\bar{r}+1}) \mid
\ell \text{ such that }\beta_{1,\ell}=\cdots=\beta_{\bar{r},\ell}=0
\right\rbrace.
\end{multline}
Finally, for the case $\bar{r}<r$, the result follows from formulas (\ref{EqU1r}) and (\ref{EqUrr2}). For the case $\bar{r}=r$, it follows from formulas (\ref{EqU1r}), (\ref{EqUrr3}) and (\ref{EqUrr4}).
\end{proof}

\section{Computing lct for m-binomial ideals}
\label{SecMain}

Given a pseudo-resolution $\Sigma'\to\Sigma$ (\ref{DefPseudoRes}) of a m-binomial ideal (\ref{DefGBinId}) we may use Proposition~\ref{PropLctAfin} to compute the log-canonical threshold of the ideal, by computing the minimum of all affine charts in the pseudo-resolution.
However we will see that the computation of the pseudo-resolution may be avoided, instead of a regular subdivision $\Sigma'$ of the fan $\Sigma$ we only need a suitable subdivision $\Gamma$ of $\Sigma$ such that the minimum of a function is attained at some ray of $\Gamma$.

Given a $r\times n$ matrix $M$ with integer entries, we denote by $\row_i(M)\in\mathbb{Z}^n$ the $i$-th row of $M$.
\begin{Definition} \label{DefTriple}
We say that $(M^{+},M^{-},u)$ is a \emph{triple} if
\begin{itemize}
\item  $M^{+}$ and $M^{-}$ are $r\times n$ matrices with non-negative integer entries,

\item $u=(u_1,\ldots,u_r)\in k^{r}$ and

\item $u_i=0$ if and only if $\row_i(M^{+})=\row_i(M^{-})$.
\end{itemize}
\end{Definition}

\begin{Definition} \label{DefMatriz}
Let $W=\mathbb{A}_{k}^n=\Spec(k[x_1,\ldots,x_n])$ be the $n$-dimensional affine space.
Let $\mathfrak{a}\subset\OO_W$ be a m-binomial ideal (\ref{DefGBinId}) generated by $f_1,\ldots,f_r$. Where $f_i$ is either a monomial $x^{a_i}$ or a binomial $x^{a_i}-\tilde{u}_i x^{b_i}$, $i=1,\ldots,r$, $a_i,b_i\in\mathbb{Z}^n_{\geq 0}$ and $\tilde{u}_i\in k^{\ast}$.
We associate to the generators of the ideal $\mathfrak{a}$ a triple $(M^{+},M^{-},u)=(M^{+}_{\mathfrak{a}},M^{-}_{\mathfrak{a}},u_{\mathfrak{a}})$
(\ref{DefTriple}) as follows, for $i=1,\ldots,n$:
\begin{itemize}
\item if $f_i=x^{a_i}$ is a monomial then set $\row_i(M^{+})=\row_i(M^{-})=a_i$ and $u_i=0$, and

\item if $f_i=x^{a_i}-\tilde{u}_ix^{b_i}$ is a binomial then set
$\row_i(M^{+})=a_i$, $\row_i(M^{-})=b_i$ and $u_i=\tilde{u}_i$.
\end{itemize}
\end{Definition}

\begin{Definition} \label{DefLctM}
Let $(M^{+},M^{-},u)$ be a triple as in Definition \ref{DefTriple} and let
$c=(c_1,\ldots,c_n)\in\mathbb{Z}_{\geq 0}^n$ be a vector with non negative integer entries.
We define a function associated to the triple and $c$,
\begin{equation*}
\LCT(M^{+},M^{-},u,c):\mathbb{R}_{\geq 0}^n\to \mathbb{R}
\end{equation*}
as follows:
Let $v\in\mathbb{R}_{\geq 0}^n$ be a vector with non negative entries.
Set $M=M^{+}-M^{-}$,
\begin{itemize}
\item $\beta^{+}(v)=(\beta^{+}(v)_1,\ldots,\beta^{+}(v)_r)=M^{+}v$,

\item $\beta^{-}(v)=(\beta^{-}(v)_1,\ldots,\beta^{-}(v)_r)=M^{-}v$,

\item $\beta(v)=\beta^{+}(v)-\beta^{-}(v)=Mv$,

\item $\alpha(v)=\min\{\beta^{+}(v),\beta^{-}(v)\}=(\alpha(v)_1,\ldots,\alpha(v)_r)=$\newline
$\left(\min\{\beta^{+}(v)_1,\beta^{-}(v)_1\},\ldots,\min\{\beta^{+}(v)_r,\beta^{-}(v)_r\}\right)$.

\item Let $\varepsilon^{(v)}:\{1,\ldots,r\}\to\{1,\ldots,r\}$ be a permutation such that
\begin{equation*}
\alpha(v)_{\varepsilon^{(v)}(1)}\leq\alpha(v)_{\varepsilon^{(v)}(2)}\leq \cdots \leq \alpha(v)_{\varepsilon^{(v)}(r)}.
\end{equation*}

\item Let $r_0^{(v)}\leq r$ be the maximum index such that 
\begin{equation*}
\begin{array}{r}
\row_{\varepsilon^{(v)}(i)}(M)\neq 0 \\
\text{and }\beta_{\varepsilon^{(v)}(i)}=0
\end{array}
\qquad i=1,\ldots,r_0^{(v)}.
\end{equation*}

\item Let $n^{(v)}_1<n^{(v)}_2<\cdots <n^{(v)}_{s^{(v)}}\leq\bar{r}^{(v)}$ be the sequence associated to the first $r^{(v)}_0$ rows of $M$ and the first $r^{(v)}_0$ coordinates of the vector $u$ defined in Remark \ref{RemDefni},
but ordered with the permutation $\varepsilon^{(v)}$:
\begin{multline*} 
\mathcal{N}(\row_{\varepsilon^{(v)}(1)}(M),\ldots,\row_{\varepsilon^{(v)}(r^{(v)}_0)}(M),
u_{\varepsilon^{(v)}(1)},\ldots,u_{\varepsilon^{(v)}(r^{(v)}_0)})= \\
\left\lbrace
1=n^{(v)}_1<n^{(v)}_2<\cdots <n^{(v)}_{s^{(v)}}\leq\bar{r}^{(v)}\leq r^{(v)}_0
\right\rbrace.
\end{multline*}

Consider $c+\mathbf{1}=(c_1+1,\ldots,c_n+1)$ and the usual scalar product
$(c+\mathbf{1})\cdot v=(c_1+1)v_1+\cdots+(c_r+1)v_r$.
To simplify notation set $\varepsilon=\varepsilon^{(v)}$, $r_0=r^{(v)}_0$, $s=s^{(v)}$, $\bar{r}=\bar{r}^{(v)}$ and
$n_i=n^{(v)}_i$ for $i=1,\ldots,s$.

\item If $\bar{r}^{(v)}<r$ then set
\begin{gather*} 
\LCT(M^{+},M^{-},u,c)(v) = 
\widetilde{\Psi}\left((c+\mathbf{1})\cdot v-1,
\alpha(v)_{\varepsilon(n_1)},\ldots,\alpha(v)_{\varepsilon(n_{s})},\alpha(v)_{\varepsilon(\bar{r}+1)}\right)\\
\begin{aligned}
\hspace{2.5cm} & = \min\left\lbrace
\frac{(c+\mathbf{1})\cdot v}{\alpha(v)_{\varepsilon(n_1)}},\right. \\
  & \frac{(c+\mathbf{1})\cdot v+
\left(\alpha(v)_{\varepsilon(n_2)}-\alpha(v)_{\varepsilon(n_{1})}\right)}{\alpha(v)_{\varepsilon(n_{2})}}, \\
  & \ldots \ldots \ldots,\\
  & \frac{
(c+\mathbf{1})\cdot v+
\left(\alpha(v)_{\varepsilon(n_{s})}-\alpha(v)_{\varepsilon(n_{s-1})}\right)+\cdots+
\left(\alpha(v)_{\varepsilon(n_{s})}-\alpha(v)_{\varepsilon(n_1)}\right)}{\alpha(v)_{\varepsilon(n_{s})}}, \\
  & \left.\frac{
(c+\mathbf{1})\cdot v+
\left(\alpha(v)_{\varepsilon(\bar{r}+1)}-\alpha(v)_{\varepsilon(n_{s})}\right)+\cdots+
\left(\alpha(v)_{\varepsilon(\bar{r}+1)}-\alpha(v)_{\varepsilon(n_1)}\right)}{\alpha(v)_{\varepsilon(\bar{r}+1)}}
\right\rbrace.
\end{aligned}
\end{gather*}

\item If $\bar{r}^{(v)}=r$ then set 
\begin{equation*}
\LCT(M^{+},M^{-},u,c)(v)=\min\left\lbrace s,
\widetilde{\Psi}((c+\mathbf{1})\cdot v-1,
\alpha(v)_{\varepsilon(n_1)},\ldots,\alpha(v)_{\varepsilon(n_s)})
\right\rbrace
\end{equation*}
\end{itemize}
We also define 
\begin{equation*}
\LCT^{\ast}(M^{+},M^{-},u,c)(v)=\dfrac{(c+\mathbf{1})\cdot v}{\alpha(v)_{\varepsilon(n_1)}}.
\end{equation*}
\end{Definition}
Note that the definition of $\widetilde{\Psi}$ in \ref{DefLctM} is consistent with Definition \ref{DefPsi}.
If the vector $v$ has integer entries then all the setting in Definition~\ref{DefLctM} represents the exponents of vertex $v$ in a subdivision.
The index $r_0$ represents the last binomial generator.

\begin{Remark}
Note that $\LCT^{\ast}(M^{+},M^{-},u,c)=\LCT(N,N,0,c)$, where $N$ is the $2r\times n$ matrix obtained by joining the rows of $M^{+}$ and $M^{-}$. 
In terms of ideals, if the triple $(M^{+},M^{-},u)$ corresponds to a m-binomial ideal $\mathfrak{a}$ then the triple $(N,N,0)$ corresponds to the monomial ideal generated by all monomials appearing in the generators of $\mathfrak{a}$.
Note also that in Definition~\ref{DefLctM} the coefficients $u$ are used only to set the index $\bar{r}$.
\end{Remark}

\begin{Example}
Set $W=\Spec(k[x_1,x_2,x_3,x_4])$ ant let $\mathfrak{a}\subset k[x_1,x_2,x_3,x_4]$ be the ideal defining the monomial curve $(t^6,t^8,t^{10},t^{11})$. Generators of $\mathfrak{a}$ may be computed with Singular \cite{Singular2012}:
\begin{equation*}
\mathfrak{a}=\left\langle
x_{2}^{2}-x_{1}x_{3},
x_{1}^{3}-x_{2}x_{3},
x_{1}^{2}x_{2}-x_{3}^{2},
x_{1}^{2}x_{3}-x_{4}^{2}
\right\rangle.
\end{equation*}
The triple associated to $\mathfrak{a}$ is $(M^{+},M^{-},u)$ where
\begin{equation*}
M^{+}=
\left(
\begin{array}{*{4}{r}}
0 & 2 & 0 & 0\\
3 & 0 & 0 & 0\\
2 & 1 & 0 & 0\\
2 & 0 & 1 & 0
\end{array}
\right),
\qquad
M^{-}=
\left(
\begin{array}{*{4}{r}}
1 & 0 & 1 & 0\\
0 & 1 & 1 & 0\\
0 & 0 & 2 & 0\\
0 & 0 & 0 & 2
\end{array}
\right),
\qquad
u=(1,1,1,1).
\end{equation*}
We compute the value $\LCT(M^{+},M^{-},u,0)(v)$ for $v=(6,8,10,11)$. The ingredients in Definition~\ref{DefLctM} are the following:

\begin{gather*}
\beta^{+}=\beta_{v}^{+}=M^{+}v=(16,18,20,22),
\qquad
\beta^{-}=\beta_{v}^{-}=M^{-}v=(16,18,20,22),\\
\beta=\beta_v=(M^{+}-M^{-})v=(0,0,0,0),
\qquad
\alpha=\alpha_v=\min\{M^{+}v,M^{-}v\}=(16,18,20,22),
\end{gather*}
The permutation $\varepsilon$ is the identity.
The number $r_0=r=4$ since all generators are binomials and $\beta=0$.
Since all coordinates of the vector $u$ are $1$ then all rows of $M$ are compatible (\ref{DefBetaUComp}) and then $\bar{r}=r_0=r=4$.

The sequence $\mathcal{N}(\row_1(M),\row_2(M),\row_3(M),\row_4(M),1,1,1,1)$ (\ref{RemDefni}) is
$n_1=1<n_2=2<n_3=4=\bar{r}$ and $s=3$, the rank of $M$.
Note that the third row of the matrix $M$ is in the linear span of the first two rows.

Finally $\LCT(M^{+},M^{-},u,0)(v)$ is the minimum of
\begin{eqnarray*}
s & = & 3, \\
\frac{\mathbf{1}\cdot v}{\alpha_{\varepsilon(n_1)}} & = & \dfrac{35}{16}, \\
\frac{\mathbf{1}\cdot v+(\alpha_{\varepsilon(n_2)}-\alpha_{\varepsilon(n_{1})})}{\alpha_{\varepsilon(n_{2})}} &
= & \frac{35+2}{18}=\dfrac{37}{18}, \\
\frac{
\mathbf{1}\cdot v+
(\alpha_{\varepsilon(n_{3})}-\alpha_{\varepsilon(n_{2}})+
(\alpha_{\varepsilon(n_{3})}-\alpha_{\varepsilon(n_1)})}{\alpha_{\varepsilon(n_{3})}} & = &
\frac{35+4+6}{22}=\dfrac{45}{22},
\end{eqnarray*}
so that $\LCT(M^{+},M^{-},u,0)(v)=45/22$.

Now consider the ideal
\begin{equation*}
\mathfrak{a}_1=\left\langle
x_{2}^{2}-x_{1}x_{3},
x_{1}^{3}-x_{2}x_{3},
x_{1}^{2}x_{2}+x_{3}^{2},
x_{1}^{2}x_{3}-x_{4}^{2}
\right\rangle.
\end{equation*}
Note that the generators of $\mathfrak{a}_1$ only differ in one coefficient from those of $\mathfrak{a}$. The triple associated to $\mathfrak{a}_1$ is $(M^{+},M^{-},u_1)$ with the same matrices and $u_1=(1,1,-1,1)$.

Here we have the same $\beta=(0,0,0,0)$, $\alpha=(16,18,20,22)$ and $r_0=r=4$.
But now $\row_1(M),\row_2(M),\row_3(M)$ and $(1,1,-1)$ are not compatible, see (\ref{DefBetaUComp}).
The sequence $\mathcal{N}(\row_1(M),\row_2(M),1,1)$ is
$n_1=1<n_2=2=\bar{r}$.
In this case $\LCT(M^{+},M^{-},u,0)(v)$ is the minimum of
\begin{eqnarray*}
\frac{\mathbf{1}\cdot v}{\alpha_{\varepsilon(n_1)}} & = & \dfrac{35}{16}, \\
\frac{\mathbf{1}\cdot v+(\alpha_{\varepsilon(n_2)}-\alpha_{\varepsilon(n_{1})})}{\alpha_{\varepsilon(n_{2})}} &
= & \frac{35+2}{18}=\dfrac{37}{18}, \\
\frac{
\mathbf{1}\cdot v+
(\alpha_{\varepsilon(\bar{r}+1)}-\alpha_{\varepsilon(n_{2}})+
(\alpha_{\varepsilon(\bar{r}+1)}-\alpha_{\varepsilon(n_1)})}{\alpha_{\varepsilon(\bar{r}+1)}} & = &
\frac{35+2+4}{20}=\dfrac{41}{20}.
\end{eqnarray*}
We conclude that $\LCT(M^{+},M^{-},u_1,0)(v)=41/20$
\end{Example}

\begin{Lemma} \label{LemProyLct}
The value of $\LCT(M^{+},M^{-},u,c)(v)$ only depends on the ray defining $v$.
If $v\in\mathbb{R}_{\geq 0}^n$ then
$\LCT(M^{+},M^{-},u,c)(v)=\LCT(M^{+},M^{-},u,c)(\lambda v)$, for every $\lambda>0$.
And the same occurs with $\LCT^{\ast}$.
\end{Lemma}

\begin{proof}
It follows from Definition \ref{DefLctM}.
\end{proof}
This means that the map $\LCT(M^{+},M^{-},u,c)$ is, in some sense, projective.

\begin{Theorem} \label{ThResolLCT}
Let $W=U_{\tau}$ be a $n$-dimensional affine smooth toric variety associated to a cone $\tau$.
Set $\Sigma$ to be the fan associated to $\tau$.
Consider a set of vertices $\Xi=\{v_1,\ldots,v_n\}$ extending the rays of $\tau$.
Let $\mathfrak{a}\subset\OO_W$ be a m-binomial ideal (\ref{DefGBinId}) generated by $f_1,\ldots,f_r$, where each $f_i$ is either a monomial or a binomial. Let $(M^{+},M^{-},u)$ be the triple associated to $f_1,\ldots,f_r$ (\ref{DefMatriz}) and
$v^{c}=v_1^{c_1}\cdots v_n^{c_n}$ be the monomial defined by a divisor $\Delta$ in $W$.

If $\Pi:W'\to W$ is a pseudo resolution associated to a regular subdivision $\Sigma'\supset \Sigma$ then
\begin{equation*}
\lct(W,\Delta,\mathfrak{a})=\min\left\lbrace
\LCT(M^{+},M^{-},u,c)(v) \mid v\in\Sigma'(1)
\right\rbrace
\end{equation*}
\end{Theorem}

\begin{proof}
Set $\Delta'=\Pi^{\ast}-K_{W'/W}$ and let $\sigma\in\Sigma'$ be a cone.
By Proposition~\ref{PropLctCover}, it is enough to prove that
\begin{equation*}
\lct(U_{\sigma},\Delta',\mathfrak{a}^{\ast})=
\min\left\lbrace
\LCT(M^{+},M^{-},u,c)(v) \mid v\in\sigma(1)
\right\rbrace
\end{equation*}

We may assume, for simplicity, that $v_1,\ldots,v_n$ are the canonical vectors
$$v_i=e_i=(0,\ldots,0,\stackrel{i}{1},0,\ldots,0) \qquad i=1,\ldots,n.$$
We extend the $r\times n$ matrices $M^{+}$ and $M^{-}$ to $r\times m$ matrices $\widetilde{M}^{+}$ and $\widetilde{M}^{-}$
\begin{gather*}
\widetilde{M}^{+}=\left(
\begin{array}{cccccc}
a_{1,1} & \cdots & a_{1,n} & a_{1,n+1} & \cdots & a_{1,m} \\
\vdots & & \vdots & \vdots & & \vdots \\
a_{r,1} & \cdots & a_{r,n} & a_{r,n+1} & \cdots & a_{r,m}
\end{array}
\right) \\
\widetilde{M}^{-}=\left(
\begin{array}{cccccc}
b_{1,1} & \cdots & b_{1,n} & b_{1,n+1} & \cdots & b_{1,m} \\
\vdots & & \vdots & \vdots & & \vdots \\
b_{r,1} & \cdots & b_{r,n} & b_{r,n+1} & \cdots & b_{r,m}
\end{array}
\right)
\end{gather*}
such that they represent the total transforms of $f_1,\ldots,f_r$ in $W'$.

We consider the total coordinate ring $k[v_1,\ldots,v_m]$ of $W_{\Sigma'}$ (\ref{RemTotalCRing}).
For each $i=1,\ldots,r$, if $f_i$ is a monomial then the
\begin{equation*}
f_i^{\ast}=v^{a_i}=v_1^{a_{i,1}}\cdots v_m^{a_{i,m}}.
\end{equation*}
If $f_i$ is a binomial then
\begin{equation*}
f_i^{\ast}=v^{a_i}-v^{b_i}=
v_1^{a_{i,1}}\cdots v_m^{a_{i,m}}-u_iv_1^{b_{i,1}}\cdots v_m^{b_{i,m}}.
\end{equation*}
For $j=n+1,\ldots,m$ the $j$-th column of matrix $\widetilde{M}^{\pm}$ is obtained as follows:
Since $v_j$ is a linear combination of $v_1,\ldots,v_n$, say
\begin{equation*}
v_j=\sum_{\ell=1}^{n}\lambda_{\ell,j}v_{\ell}
\end{equation*}
then we have that
\begin{equation} \label{EqCompatibleAB}
a_{i,j}=\sum_{\ell=1}^{n}a_{i,\ell}\lambda_{\ell,j},
\qquad
b_{i,j}=\sum_{\ell=1}^{n}b_{i,\ell}\lambda_{\ell,j}.
\end{equation}
See Remark \ref{RemComplex}, Remark \ref{RemTotalCRing} and Definition \ref{DefGlobalMon}.
 
Consider the relative canonical sheaf $K_{W'/W}$, let $v^{\kappa}=v_1^{\kappa_1}\cdots v_m^{\kappa_m}$ the monomial generating the ideal $\OO_{W'}(-K_{W'/W})$. It is well known that
\begin{equation*}
\kappa_j+1=\mathbf{1}\cdot v_j=\sum_{\ell=1}^{n}\lambda_{\ell,j}
\end{equation*}
The monomial associated to the divisor $\Delta'$ is
$v_1^{c_1}\cdots v_m^{c_m}$ where
\begin{equation*}
c_j+1=\sum_{\ell}^{n}(c_{\ell}+1)\lambda_{\ell,j}
\end{equation*}
Let $v_{j_1},\ldots,v_{j_n}$ be vertices in $\Xi'$ such that they are a basis of the lattice
$N\equiv\mathbb{Z}^n$ and the rays of $\sigma$ are $v_{j_1},\ldots,v_{j_{n_0}}$.
For $i=1,\ldots,r$ the total transform of $f_i$ at the open set $U_{\sigma}$ is
\begin{gather*}
v_{j_1}^{a_{i,j_1}}\cdots v_{j_n}^{a_{i,j_n}}
\qquad\text{if } f_i \text{ is a monomial}, \\
v_{j_1}^{a_{i,j_1}}\cdots v_{j_n}^{a_{i,j_n}}-
u_i v_{j_1}^{b_{i,j_1}}\cdots v_{j_n}^{b_{i,j_n}}
\qquad\text{if } f_i \text{ is a binomial}.
\end{gather*}
Since $W'\to W$ is a pseudo-resolution, in the case of $f_i$ being a binomial we have
\begin{equation*}
v_{j_1}^{a_{i,j_1}}\cdots v_{j_n}^{a_{i,j_n}}-
u_i v_{j_1}^{b_{i,j_1}}\cdots v_{j_n}^{b_{i,j_n}}=
\pm
v_{j_1}^{\alpha_{i,j_1}}\cdots v_{j_n}^{\alpha_{i,j_n}}
\left(
1-u_i^{\pm} v_{j_1}^{\beta_{i,j_1}}\cdots v_{j_n}^{\beta_{i,j_n}}
\right)
\end{equation*}
Where $\beta_{i,j_{\ell}}=|a_{i,j_{\ell}}-b_{i,j_{\ell}}|$ and
$\alpha_{i,j_{\ell}}=\min\{a_{i,j_{\ell}},b_{i,j_{\ell}}\}$, for $\ell=1,\ldots,n$.

Set $\alpha_i=(\alpha_{i,j_1},\ldots,\alpha_{i,j_n})$ and
$\beta_i=(\beta_{i,j_1},\ldots,\beta_{i,j_n})$ for $i=1,\ldots,r$ and set
$x_j=v_{i_j}$ for $j=1,\ldots,n$.
We may reorder the generators $f_1,\ldots,f_r$ such that, for this cone $\sigma$, we have
that the ideal $\mathfrak{a}\OO_{U_{\sigma}}$ is either
\begin{equation*}
\left\langle
x^{\alpha_1}(1-u_1 x^{\beta_1}),\ldots,x^{\alpha_r}(1-u_r x^{\beta_r}
\right\rangle
\end{equation*}
or
\begin{equation*}
\left\langle
x^{\alpha_1}(1-u_1 x^{\beta_1}),\ldots,x^{\alpha_{r'}}(1-u_{r'} x^{\beta_{r'}},
x^{\alpha_{r'+1}}
\right\rangle
\end{equation*}
for some $r'<r$. Where in both cases
$\alpha_1\preceq \alpha_2\preceq \cdots\preceq\alpha_r$.
So that we are in the setting of Proposition~\ref{PropLctAfin}.

Note first that the permutation $\varepsilon$ in Definition~\ref{DefLctM} for all rays $v$ of $\sigma$ is the identity.
Note also that since $v_1,\ldots,v_n$ and $v_{i_1},\ldots,v_{i_n}$ are both basis, it follows from
(\ref{EqCompatibleAB}) that
the sequence
$\mathcal{N}(\beta_1,\ldots,\beta_r,u)=\{n_1<n_2\cdots<n_s<\bar{r}\}$
is the same as the sequence
$\mathcal{N}(\row_1(M),\ldots,\row_r(M),u)$, where $M=M^{+}-M^{-}$.

Now the result follows by comparing Definition~\ref{DefLctM} for every ray $v_{i_{\ell}}$ of $\sigma$ and Proposition~\ref{PropLctAfin} in terms of functions $\Psi_{\ell}$.
\end{proof}

\begin{Remark}
Let $W=U_{\tau}$ be a $n$-dimensional affine smooth toric variety associated to a cone $\tau$ as in Theorem \ref{ThResolLCT}, fix a point $\xi\in U_{\tau}$ in the orbit of the distinguished point of $U_{\tau}$ (see \cite[page 116]{CoxLittleSchenck2011}) and assume that the m-binomial ideal $\mathfrak{a}\subset\OO_{U_{\tau}}$ is such that $\mathfrak{a}_{\xi}\neq\OO_{U_{\tau},\xi}$, then
\begin{equation*}
\lct(U_{\tau},\Delta,\mathfrak{a})_{\xi}=
\lct(U_{\tau},\Delta,\mathfrak{a}).
\end{equation*}
 
If $W=W_{\Sigma}$ is a smooth toric variety and $\mathfrak{a}\subset\OO_W$ is a m-binomial ideal (\ref{DefGBinId}) then
\begin{equation*}
\lct(W,\Delta,\mathfrak{a})=
\min\left\{\lct(U_{\sigma},\Delta|_{U_{\sigma}},\mathfrak{a}|_{U_{\sigma}}) \mid \sigma\in\Sigma\right\}.
\end{equation*}
For a point $\xi\in W$ such that $\mathfrak{a}_{\xi}\neq\OO_{W,\xi}$,
set $\sigma\in\Sigma$ the unique cone such that $\xi\in U_{\sigma}$ and $\xi$ is in the orbit of the distinguished point of $U_{\sigma}$ (see \cite[page 116]{CoxLittleSchenck2011}),
we have that
\begin{equation*}
\lct(W,\Delta,\mathfrak{a})_{\xi}=
\lct(U_{\tau},\Delta|_{U_{\tau}},\mathfrak{a}|_{U_{\tau}}).
\end{equation*}
\end{Remark}

Theorem~\ref{ThResolLCT} gives a way of computing the log-canonical threshold of a m-binomial ideal, but one needs to compute a pseudo-resolution of the ideal. We want to avoid this computation and express the log-canonical threshold of the ideal in terms of simpler computations.

The function $\LCT(M^{+},M^{-},u,c)$ is not continuous in general, but we may stratify the space $\mathbb{R}^{n}_{\geq 0}$ such that the function $\LCT(M^{+},M^{-},u,c)$ is continuous in every stratum. In fact this stratification is given by a fan.
\begin{Proposition} \label{PropStrata}
Let $(M^{+},M^{-},u)$ be a triple (\ref{DefTriple}).

If $c\in\mathbb{Z}^n$ then 
there is a fan $\Gamma$, with support $\mathrm{R}^{n}_{\geq 0}$, such that for every cone $\gamma\in\Gamma$ the function
$\LCT(M^{+},M^{-},u,c)$ is continuous in the relative interior of $\gamma$.
\end{Proposition}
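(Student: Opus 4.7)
The plan is to construct $\Gamma$ as the fan cut on $\mathbb{R}^n_{\geq 0}$ by a finite central hyperplane arrangement, chosen so that every combinatorial datum in Definition~\ref{DefLctM}---the branch selected by the componentwise minimum, the sorting permutation $\varepsilon$, the indices $r_0, \tilde{s}, s_v$ and the sequence $\{n_j\}$---is locally constant on the relative interior of each cone. Once this is achieved, $\lct_{(M^+,M^-,u)}(v)$ will be, on each $\mathrm{relint}(\gamma)$, the minimum of a fixed finite list of rational functions whose denominators have constant sign, and continuity will follow automatically.

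First I will isolate the sources of discontinuity. Each component $\alpha_{v,i}=\min\{M^+_i v,\ M^-_i v\}$ switches branch precisely on the hyperplane $H_i=\{v\mid M_i v=0\}$, where $M_i$ is the $i$-th row of $M=M^+-M^-$; the same hyperplanes control the vanishing of $\beta_v=Mv$ and hence determine $s_v$. Within a chamber of the arrangement $\{H_i\}_{i=1}^r$ each $\alpha_{v,i}$ becomes a single linear form in $v$, so the sorting permutation is constant (up to ties) within each chamber of the refined arrangement obtained by adjoining the hyperplanes $K^{\sigma,\tau}_{i,j}=\{v\mid M^\sigma_i v=M^\tau_j v\}$ for $i\neq j$ and $\sigma,\tau\in\{+,-\}$. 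I take $\mathcal{H}$ to be the finite union of the $H_i$ and the $K^{\sigma,\tau}_{i,j}$, and define $\Gamma$ to consist of the intersections of $\mathbb{R}^n_{\geq 0}$ with the closed chambers of $\mathcal{H}$, together with all their faces. Since every hyperplane in $\mathcal{H}$ passes through the origin, these regions are rational polyhedral cones, and the standard properties of central arrangements make $\Gamma$ into a fan with support $\mathbb{R}^n_{\geq 0}$.

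Next I will check continuity on the relative interior of a fixed $\gamma\in\Gamma$. On $\mathrm{relint}(\gamma)$ the sign of each $M_i v$ is constant by construction, so every $\alpha_{v,i}$ restricts to a fixed linear form; the order among distinct values $\alpha_{v,i}$ is constant; and the subset $\{i\mid\beta_{v,i}=0\}$ is constant. I can thus fix a permutation $\varepsilon$ compatible with the ordering throughout $\mathrm{relint}(\gamma)$, after which the integers $r_0, \tilde{s}, s_v$ and the sequence $\{n_j\}$---which depend only on $\varepsilon$, $M$ and $u$---become constant as well. Definition~\ref{DefLctM} then exhibits $\lct_{(M^+,M^-,u)}(v)$ as the minimum of the constant $\tilde{s}$ together with finitely many quotients of linear forms in $v$; each denominator $\alpha_{\varepsilon(n_j)}$ is a nonnegative linear form that is either strictly positive on $\mathrm{relint}(\gamma)$, giving a continuous quotient there, or identically zero, in which case the quotient is $+\infty$ and is discarded from the minimum. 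The resulting finite minimum of continuous functions is continuous on $\mathrm{relint}(\gamma)$.

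The hard part will be the treatment of ties: when several $\alpha_{v,i}$'s coincide throughout $\gamma$, the permutation $\varepsilon$ is not unique and both the sequence $\{n_j\}$ and the index $r_0$ may in principle depend on the choice. I will need to verify that for every cone $\gamma\in\Gamma$ a consistent choice of $\varepsilon$ on $\mathrm{relint}(\gamma)$ yields a single well-defined value of $\lct_{(M^+,M^-,u)}$---using that differences $\alpha_{\varepsilon(n_{k+1})}-\alpha_{\varepsilon(n_k)}$ associated with swaps inside tied blocks vanish, and that linear-dependence conditions on the rows $M_{\varepsilon(n_j)}$ depend only on the unordered set of selected indices. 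If any residual ambiguity in $r_0$ survives (because the ideal-theoretic condition defining $r_0$ is sensitive to the order of tied generators), I will refine $\mathcal{H}$ further by additional central hyperplanes forcing those ambiguities to resolve, so that on each cone of the refined $\Gamma$ the discrete data is canonically determined.
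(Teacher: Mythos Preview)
Your proof is correct and follows essentially the same approach as the paper: you subdivide $\mathbb{R}^n_{\geq 0}$ by the central hyperplanes $\{M_i v=0\}$ and $\{M^\sigma_i v = M^\tau_j v\}$, exactly the arrangement the paper uses (written there as the hyperplanes with normals $M^+_i-M^-_i$ and $M^\pm_i-M^\pm_j$), and then observe that the discrete data in Definition~\ref{DefLctM} is constant on each relative interior, so $\lct_{(M^+,M^-,u)}$ is a finite minimum of continuous functions there. Your treatment of ties and of possibly vanishing denominators is in fact more careful than the paper's, which simply asserts that $\varepsilon$ and the sequence $\{n_j\}$ are constant on each relative interior without discussing these edge cases.
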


\begin{proof}
%Denote by $M^{+}_i$ and $M^{-}_i$ the $i$-th row of the matrix $M^{+}$ and $M^{-}$, respectively.
Consider the hyperplanes of $\mathbb{R}^n$ having normal vectors:
\begin{equation} \label{EqHPlanes}
\begin{gathered}
\row_i(M^{+})-\row_i(M^{-}) \qquad i=1,\ldots,n\ , \\
\row_i(M^{\pm})-\row_j(M^{\pm}) \qquad i<j.
\end{gathered}
\end{equation}
Let $\Gamma$ be the fan obtained by the subdivision of $\mathbb{R}^{n}_{\geq 0}$ given by these hyperplanes.
For every cone $\gamma\in\Gamma$ the relative interior of $\gamma$ is defined by some hyperplanes equalities and some hyperplane inequalities, $> 0$ or $< 0$ of (\ref{EqHPlanes}).

By our construction, the permutation $\varepsilon^{(v)}$, the number $r^{(v)}_o$ and the sequence $n^{(v)}_1<\cdots <n^{(v)}_{s^{(v)}}\leq\bar{r}^{(v)}$ in Definition~\ref{DefLctM} are the same for all $v$ in the relative interior of $\gamma$.
The function $\LCT(M^{+},M^{-},u,c)$ in the relative interior of $\gamma$ is continuous since it is the minimum of continuous functions.
\end{proof}

Note that our fan $\Gamma$ need not to be regular. The fan used for embedded resolution in
\cite{GonzalezTeissier2002} is compatible with a subset of hyperplanes in (\ref{EqHPlanes}),
see also \cite[Proposition~6.3]{Teissier2004}.

In fact, we prove that the minimum of the function $\LCT(M^{+},M^{-},u,c)$ exists and it is equal to $\lct(W,\Delta,\mathfrak{a})$.
\begin{Theorem} \label{ThlctMinv}
Let $W=U_{\tau}$ be a $n$-dimensional affine smooth toric variety associated to a cone $\tau\subset\mathbb{R}_{\geq 0}^{n}$.
Set $\Sigma$ to be the fan associated to $\tau$.
Consider a set of vertices $\Xi=\{v_1,\ldots,v_n\}$ extending the rays of $\tau$.
Let $\mathfrak{a}\subset\OO_W$ be a m-binomial ideal (\ref{DefGBinId}) generated by $f_1,\ldots,f_r$, where each $f_i$ is either a monomial or a binomial. Let $(M^{+},M^{-},u)$ be the triple associated to $f_1,\ldots,f_r$ (\ref{DefMatriz}) and
$v^{c}=v_1^{c_1}\cdots v_n^{c_n}$ be the monomial defined by a divisor $\Delta$ in $W$.

Then the $lct$ of the ideal $\mathfrak{a}$ is the minimum of the function 
$\LCT(M^{+},M^{-},u,c)$:
\begin{equation*}
\lct(W,\Delta,\mathfrak{a})=\min\left\{\LCT(M^{+},M^{-},u,c)(v) \mid v\in\tau\right\}.
\end{equation*}
\end{Theorem}

\begin{proof}
First note that $\LCT_{(M^{+},M^{-},u,c)}(v)\geq 0$ so that $\inf\{\LCT(M^{+},M^{-},u,c)(v)\mid v\in\mathbb{R}^{n}_{\geq 0}\}$ exists.

By Theorem \ref{ThResolLCT} we have
\begin{equation*}
\lct(W,\Delta,\mathfrak{a})\geq \inf\left\{\LCT(M^{+},M^{-},u,c)(v) \mid v\in\mathbb{R}^{n}_{\geq 0}\right\}.
\end{equation*}

We only have to prove the reverse inequality. We want to prove that for any $v\in\tau$ we have
\begin{equation} \label{EqIneqVQ}
    \lct(W,\Delta,\mathfrak{a})\leq \LCT(M^{+},M^{-},u,c)(v).
\end{equation}

If $v\in\tau\cap\mathbb{Q}^{n}_{\geq 0}$, let $m\in\mathbb{N}$ such that 
$mv\in\mathbb{N}^{n}$. Consider a subdivision $\Sigma_1$ of $\Sigma$ containing 
$mv$ and refine $\Sigma_1$ in order to have a subdivision 
$\Sigma'$ and a pseudo-resolution of $\mathfrak{a}$ with $mv$ a ray of $\Sigma'$. 
By Theorem \ref{ThResolLCT} and Lemma \ref{LemProyLct} we have inequality (\ref{EqIneqVQ}).

Let $\Gamma$ be the fan of Proposition~\ref{PropStrata}.
If $v\in\Gamma(1)$ then there exist $\lambda\in\mathbb{R}$ with $\lambda v\in\mathbb{Q}^{n}_{\geq 0}$ and again by Lemma~\ref{LemProyLct} we obtain inequality (\ref{EqIneqVQ}).

Assume that $v\in\tau$ and $v\not\in\Gamma(1)$, there exists a unique cone $\gamma\in\Gamma$ with $v$ in the relative interior of $\gamma$.
There is a sequence $\left\{w_{\ell}\right\}_{\ell\in\mathbb{N}}$ such that
 with , with 
\begin{itemize}
\item $w_{\ell}\in\mathbb{Q}^{n}_{\geq 0}$ for all $\ell\in\mathbb{N}$,

\item $w_{\ell}$ is in the relative interior of $\gamma$ and

\item  $\lim_{\ell\to\infty}w_{\ell}=v$.
\end{itemize}
We have seen that 
\begin{equation*}
\lct(W,\Delta,\mathfrak{a})\leq\LCT(M^{+},M^{-},u,c)(w_{\ell})
\qquad \forall \ell\in\mathbb{N}.
\end{equation*}
Finally, it follows from Proposition~\ref{PropStrata} that inequality (\ref{EqIneqVQ}) holds for $v$.
\end{proof}

By Theorem \ref{ThlctMinv}, the problem of computing $\lct(W,\Delta,\mathfrak{a})$ is reduced to the problem of computing the minimum of the function $\LCT(M^{+},M^{-},u,c)$. The last problem relies on computing the rays of the fan determined by the rows of the matrices $M^{+}$ and $M^{-}$.

\begin{Theorem} \label{ProplctRay}
Let $(M^{+},M^{-},u)$ be the triple associated to $\mathfrak{a}$ and let $\Gamma$ be the fan given by Proposition \ref{PropStrata}.

The minimum of the function $\LCT(M^{+},M^{-},u,c)$ (\ref{DefLctM}) is attained at some ray of the fan $\Gamma$.
If $\Gamma(1)=\{w_1,\ldots,w_t\}$ then
\begin{equation*}
\min\LCT(M^{+},M^{-},u,c)=\min\left\{ \LCT(M^{+},M^{-},u)(w_i) \mid i=1,\ldots,t\right\}.
\end{equation*}
\end{Theorem}

\begin{proof}
Let $\gamma\in\Gamma$ be a cone. As in the proof of Proposition \ref{PropStrata} There is a permutation $\varepsilon$ of $\{1,\ldots,r\}$ such that, with the notation of Definition \ref{DefLctM},
\begin{equation*}
\alpha_{v,\varepsilon(1)}\leq \alpha_{v,\varepsilon(2)}\leq \cdots \leq \alpha_{v,\varepsilon(r)}
\qquad \forall v\in\gamma.
\end{equation*}
For $v$ in the relative interior of $\gamma$ set $\varphi(v)=\LCT(M^{+},M^{-},u,c)(v)$.
Consider the function $\varphi:\gamma\to\mathbb{R}$ extended by continuity to the boundary.
\begin{description}
\item[Claim 1] If $v\in\gamma$ is such that $\varphi(v)=\inf\varphi$ then $v$ is in the relative boundary of $\gamma$.

\item[Claim 2] If $v$ is in the relative boundary of $\gamma$ then $\LCT(M^{+},M^{-},u)(v)\leq\varphi(v)$.
\end{description}
First note that proposition will follow from claim~1 and claim~2.

For claim~1, note that $\varphi$ is the minimum of functions like
\begin{equation*}
\varphi_i(v)=\frac{
(c+\mathbf{1})\cdot v+
(\alpha_{v,\varepsilon(n_{i})}-\alpha_{v,\varepsilon(n_{i-1}})+\cdots+
(\alpha_{v,\varepsilon(n_{i})}-\alpha_{v,\varepsilon(n_1)})}{\alpha_{v,\varepsilon(n_{i})}}
\end{equation*}
for $i=1,\ldots,s_v$,  see Definition~\ref{DefLctM}.
Since the $\alpha_{v,\varepsilon(n_j)}$ are linear functions on $v\in\gamma$ and $\varphi_i(\lambda v)=\varphi_i(v)$ (\ref{LemProyLct}) we have that
\begin{equation*}
\inf\left\{\varphi_i(v)\mid v\in\gamma\right\}=
\inf\left\{\varphi_i(v) \mid \alpha_{v,\varepsilon(n_i)}=1,\ v\in\gamma\right\}.
\end{equation*}
The function $\varphi_i$ restricted to the hyperplane $\alpha_{v,\varepsilon(n_i)}=1$ is a linear function and its minimum will be attained at some point of the boundary of the domain. This proves claim~1.

For claim~2 note that the number $s_v$ is constant for all $v$ in the relative interior of $\gamma$.
If $v'$ is in the relative boundary of $\gamma$ then $s_{v'}\geq s_v$ and claim~2 follows.
\end{proof}

\section{Computation and examples} \label{SecExample}

This section is devoted to show several examples of computation of $\lct$ of m-binomial ideals.
All computations have been made with Singular \cite{Singular2012}.
We describe a simple procedure to compute the rays of the fan $\Gamma$ appearing in Theorem~\ref{ProplctRay}. It is a naive procedure, but even if it may be improved it avoids any computation of a pseudo-resolution of the ideal.
In fact, complexity is bounded only in terms of the number of variables $n$ and the number of generators $r$.
\begin{Remark}
Let $\mathfrak{a}\subset k[x_1,\ldots,x_n]$ be a m-binomial ideal generated by $f_1,\ldots,f_r$, where each $f_i$ is either a monomial or a binomial.
Set $(M^{+},M^{-},u)$ be the triple associated to $f_1,\ldots,f_r$ (\ref{DefMatriz}).
We denote $M_i^{\pm}$ the $i$-th row of the matrix $M^{\pm}$.

By Theorem~\ref{ProplctRay} we shall check the minimum of the values $\LCT(M^{+},M^{-},u,0)(v)$ for every $v\in\Gamma(1)$, where $\Gamma$ is the fan given by Proposition \ref{PropStrata}.

Set $A$ to be the identity $n\times n$ matrix.
Following proof of Proposition \ref{PropStrata}, add to the matrix $A$ the rows
\begin{gather*}
M^{+}_i-M^{-}_i \qquad i=1,\ldots,n\ , \\
M^{\pm}_i-M^{\pm}_j \qquad i<j.
\end{gather*}
We may delete some rows of $A$:
\begin{itemize}
\item every row with $A_i=0$,

\item every row $A_i$ with non-negative entries and

\item every row $A_i$ proportional to some other row $A_j$.
\end{itemize}

Every ray in $\Gamma(1)$ is defined by $n-1$ $\mathbb{Q}$-linear independent rows of the matrix $A$.
The problem of enumerating all rays of a fan has been solved in \cite{AvisFukuda1992}, the authors proved that the full list of vertices can be found in $O(r_{A}^2(n-1)N_0)$ time, where $N_0$ is the number of rays and $r_A$ is the number of rows of the matrix $A$.
Note that $r_A\leq n+\binom{2r}{2}$ and $N_0\leq \binom{r_A}{n-1}$. In general the problem of estimating $N_0$ is hard.
Once we have obtained the list of all rays, then we shall evaluate the function $\LCT(M^{+},M^{-},u,0)$ at every ray and compute the minimum.
\end{Remark}

\begin{figure}[t]
\begin{equation*}
\begin{array}{|c|r|l|r|l|}
\hline
\text{Ray} & \multicolumn{2}{c|}{\LCT} & \multicolumn{2}{c|}{\LCT^{\ast}} \\ \hline
(2,1,3) & 3 & 3 & 3 & 3\\
(2,2,3) & 7/4 & 1.75 & 7/4 & 1.75\\
(1,1,2) & 2 & 2 & 2 & 2\\
(4,5,6) & 16/11 & 1.4545 & 3/2 & 1.5\\
\mathbf{(3,4,5)} & \mathbf{13/9} & \mathbf{1.4444} & 3/2 & 1.5\\
(1,1,1) & 3/2 & 1.5 & 3/2 & 1.5\\
(2,3,2) & 7/4 & 1.75 & 7/4 & 1.75\\
(4,6,7) & 17/11 & 1.5454 & 17/11 & 1.5454\\
(2,3,5) & 5/3 & 1.6667 & 5/3 & 1.6667\\
(2,3,3) & 8/5 & 1.6 & 8/5 & 1.6\\
(2,3,4) & 3/2 & 1.5 & 3/2 & 1.5\\
(1,2,1) & 2 & 2 & 2 & 2\\
(2,4,3) & 9/5 & 1.8 & 9/5 & 1.8\\
(1,2,2) & 5/3 & 1.6667 & 5/3 & 1.6667\\
(1,2,3) & 2 & 2 & 2 & 2\\
\hline
\end{array}\
\begin{array}{|c|r|l|r|l|}
\hline
\text{Ray} & \multicolumn{2}{c|}{\LCT} & \multicolumn{2}{c|}{\LCT^{\ast}} \\ \hline
(1,3,0) & 2 & 2 & \infty & \infty \\
(1,1,0) & 2 & 2 & \infty & \infty \\
(2,1,0) & 2 & 2 & \infty & \infty \\
(2,3,0) & 2 & 2 & \infty & \infty \\
(1,2,0) & 2 & 2 & \infty & \infty \\
(1,0,3) & 2 & 2 & \infty & \infty \\
(2,0,3) & 2 & 2 & \infty & \infty \\
(1,0,2) & 2 & 2 & \infty & \infty \\
(1,0,1) & 2 & 2 & \infty & \infty \\
(1,0,0) & 2 & 2 & \infty & \infty \\
(0,2,1) & 2 & 2 & \infty & \infty \\
(0,1,1) & 2 & 2 & \infty & \infty \\
(0,1,2) & 2 & 2 & \infty & \infty \\
(0,1,0) & 2 & 2 & \infty & \infty \\
(0,0,1) & 2 & 2 & \infty & \infty \\
\hline \end{array}
\end{equation*}
\caption{Rays for curve $(t^3,t^4,t^5)$}\label{Fig345}
\end{figure}

\begin{Example}
Let $V\subset\mathbb{A}^{3}$ be the monomial curve given by the parametrization $(t^3,t^4,t^5)$. The ideal $\mathfrak{a}$, defining $V$, is generated by the binomials:
\begin{equation*}
\left\langle x_{2}^{2}-x_{1}x_{3},
x_{2}x_{3}-x_{1}^{3},
x_{3}^{2}-x_{1}^{2}x_{2} \right\rangle.
\end{equation*}
We have obtained these generators with Singular \cite{Singular2012}.
In fact these three binomials form a standard basis of the ideal $\mathfrak{a}$.
The triple $(M^{+}$, $M^{-},u)$ associated to these generators is
\begin{equation*}
M^{+}=\left(
\begin{array}{*{3}{r}}
0 & 2 & 0\\
0 & 1 & 1\\
0 & 0 & 2
\end{array}
\right)
\qquad
M^{-}=\left(
\begin{array}{*{3}{r}}
1 & 0 & 1\\
3 & 0 & 0\\
2 & 1 & 0
\end{array}
\right)
\qquad
u=(1,1,1).
\end{equation*}

The transpose of matrix $A$, after deleting superfluous rows, is
\begin{equation*}
A^t=\left(
\begin{array}{*{14}{c}}
1 & 0 & 0 & -2 & -3 & -1 & 0 & -1 & 1 & 2 & 2 & 3 & 3 & 1 \\
0 & 1 & 0 & 1 & 2 & 2 & 2 & 1 & 1 & 1 & 0 & 0 & -1 & 0 \\
0 & 0 & 1 & 0 & 0 & -1 & -2 & 0 & -1 & -2 & -1 & -2 & -1 & -1
\end{array}
\right).
\end{equation*}
We obtain $30$ rays and the minimum of $\LCT(M^{+},M^{-},u,0)$ is $\lct(\mathfrak{a})=13/9$.
Figure~\ref{Fig345} is the list of all the rays of $\Gamma$, each vector is the solution of the linear system obtained by choosing two rows of $A$.
We also show the values of $\LCT(M^{+},M^{-},u,0)$ and  $\LCT^{\ast}(M^{+},M^{-},u,0)$.

The ray giving the minimum coincides with the vector coming from the parametrization of the curve.
However this is not always true as it is illustrated by the next example.
\end{Example}

\begin{figure}
\begin{equation*}
\begin{array}{|c|r|l|r|l|}
\hline
\text{Ray} & \multicolumn{2}{c|}{\LCT} & \multicolumn{2}{c|}{\LCT^{\ast}} \\ \hline
(2,5,5) & 4/3 & 1.3333 & 4/3 & 1.3333\\
(4,10,11) & 25/19 & 1.3158 & 25/19 & 1.3158\\
(2,6,5) & 13/9 & 1.4444 & 13/9 & 1.4444\\
(2,4,5) & 11/8 & 1.375 & 11/8 & 1.375\\
(2,5,4) & 11/8 & 1.375 & 11/8 & 1.375\\
(1,3,2) & 3/2 & 1.5 & 3/2 & 1.5\\
(4,9,10) & 24/19 & 1.2631 & 23/18 & 1.2778\\
(3,7,8) & 19/15 & 1.2667 & 9/7 & 1.2857\\
\mathbf{(1,2,2)} & \mathbf{5/4} & \mathbf{1.25} & 5/4 & 1.25\\
(1,3,3) & 7/5 & 1.4 & 7/5 & 1.4\\
(2,5,6) & 13/10 & 1.3 & 13/10 & 1.3\\
(2,3,5) & 5/3 & 1.6667 & 5/3 & 1.6667\\
(2,5,7) & 7/5 & 1.4 & 7/5 & 1.4\\
(1,1,2) & 2 & 2 & 2 & 2\\
(1,3,4) & 8/5 & 1.6 & 8/5 & 1.6\\
(1,2,3) & 3/2 & 1.5 & 3/2 & 1.5\\
(1,5,0) & 2 & 2 & \infty & \infty \\
\hline
\end{array}\
\begin{array}{|c|r|l|r|l|}
\hline
\text{Ray} & \multicolumn{2}{c|}{\LCT} & \multicolumn{2}{c|}{\LCT^{\ast}} \\ \hline
(2,5,0) & 2 & 2 & \infty & \infty \\
(1,3,0) & 2 & 2 & \infty & \infty \\
(1,2,0) & 2 & 2 & \infty & \infty \\
(1,1,0) & 2 & 2 & \infty & \infty \\
(1,0,5) & 2 & 2 & \infty & \infty \\
(2,0,5) & 2 & 2 & \infty & \infty \\
(2,0,3) & 2 & 2 & \infty & \infty \\
(1,0,2) & 2 & 2 & \infty & \infty \\
(1,0,3) & 2 & 2 & \infty & \infty \\
(1,0,1) & 2 & 2 & \infty & \infty \\
(1,0,0) & 2 & 2 & \infty & \infty \\
(0,2,1) & 2 & 2 & \infty & \infty \\
(0,1,2) & 2 & 2 & \infty & \infty \\
(0,1,1) & 2 & 2 & \infty & \infty \\
(0,1,0) & 2 & 2 & \infty & \infty \\
(0,0,1) & 2 & 2 & \infty & \infty \\
\hline \end{array}
\end{equation*}
\caption{Rays for curve $t^3,t^7,t^8$}
\label{Fig378}
\end{figure}

\begin{Example}
Let $V\subset\mathbb{A}^{3}$ be the monomial curve given by the parametrization $(t^3,t^7,t^8)$. The ideal $\mathfrak{a}$ defining $V$ is generated by binomials and the standard basis is:
\begin{equation*}
\mathfrak{a}=
\left\langle
x_{1}^{2}x_{3}-x_{2}^{2},
x_{1}x_{2}^{3}-x_{3}^{3},
x_{1}^{3}x_{2}-x_{3}^{2},
x_{2}^{5}-x_{1}x_{3}^{4},
x_{1}^{5}-x_{2}x_{3}
\right\rangle.
\end{equation*}
For computations it is enough to consider a set of generators, say,
\begin{equation*}
\mathfrak{a}=
\left\langle
x_{1}^{2}x_{3}-x_{2}^{2},
x_{1}^{3}x_{2}-x_{3}^{2},
x_{1}^{5}-x_{2}x_{3}
\right\rangle.
\end{equation*}
The triple $(M^{+}$, $M^{-},u)$ associated to these generators is
\begin{equation*}
M^{+}=
\left(
\begin{array}{*{3}{r}}
2 & 0 & 1\\
3 & 1 & 0\\
5 & 0 & 0
\end{array}
\right)
\qquad
M^{-}=
\left(
\begin{array}{*{3}{r}}
0 & 2 & 0\\
0 & 0 & 2\\
0 & 1 & 1
\end{array}
\right)
\qquad
u=(1,1,1).
\end{equation*}

The transpose of $A$, after deleting superfluous rows, is
\begin{equation*}
A^t=
\left(
\begin{array}{*{14}{c}}
1 & 0 & 0 & -1 & -3 & 2 & 2 & 2 & 3 & 3 & 5 & 5 & 5 & 0 \\
0 & 1 & 0 & -1 & 0 & -2 & 0 & -1 & -1 & 1 & -2 & 0 & -1 & 2 \\
0 & 0 & 1 & 1 & 1 & 1 & -1 & 0 & 0 & -2 & 0 & -2 & -1 & -2
\end{array}
\right).
\end{equation*}
We obtain $33$ rays and the minimum of $\LCT(M^{+},M^{-},u,0)$ is $\lct(\mathfrak{a})=5/4$.
Figure~\ref{Fig378} shows the list of all the rays.
Note that the minimum is achieved at ray $(1,2,2)$ ant not at ray $(3,7,8)$.
\end{Example}

\begin{Example}
Let $V\subset\mathbb{A}^{4}$ be the monomial curve given by the parametrization $(t^5,t^6,t^8,t^9)$. The ideal $\mathfrak{a}$ defining $V$ is generated by six binomials
\begin{equation*}
\mathfrak{a}=\langle
x_{2}x_{3}-x_{1}x_{4},
x_{1}^{2}x_{3}-x_{4}^{2},
x_{2}^{3}-x_{1}^{2}x_{3},
x_{1}x_{2}^{2}-x_{3}x_{4},
x_{1}^{2}x_{2}-x_{3}^{2},
x_{1}^{3}-x_{2}x_{4}
\rangle.
\end{equation*}
The triple $(M^{+},M^{-},u)$ is
\begin{equation*}
M^{+}=
\left(
\begin{array}{*{4}{r}}
0 & 1 & 1 & 0\\
2 & 0 & 1 & 0\\
0 & 3 & 0 & 0\\
1 & 2 & 0 & 0\\
2 & 1 & 0 & 0\\
3 & 0 & 0 & 0
\end{array}
\right),
\qquad
M^{-}=
\left(
\begin{array}{*{4}{r}}
1 & 0 & 0 & 1\\
0 & 0 & 0 & 2\\
2 & 0 & 1 & 0\\
0 & 0 & 1 & 1\\
0 & 0 & 2 & 0\\
0 & 1 & 0 & 1
\end{array}
\right)
\qquad
u=(1,1,1,1,1,1).
\end{equation*}
The matrix $A$, after deleting superfluous rows, has $41$ rows, we obtain a list of $848$ rays.
The minimum of $\LCT(M^{+},M^{-},u,0)$ is $23/12=\lct(W,\mathfrak{a})$ which is attained at the ray $(4,5,6,7)$.
The function $\LCT^{\ast}(M^{+},M^{-},u,0)$ has minimum equal to $2$, which corresponds to the log-canonical threshold of the monomial ideal
\begin{equation*}
\langle
x_{2}x_{3},x_{1}x_{4},
x_{1}^{2}x_{3},x_{4}^{2},
x_{2}^{3}, %x_{1}^{2}x_{3}, %repetido
x_{1}x_{2}^{2},x_{3}x_{4},
x_{1}^{2}x_{2},x_{3}^{2},
x_{1}^{3},x_{2}x_{4}
\rangle.
\end{equation*}
\end{Example}

\begin{Example}
Let $V\subset\mathbb{A}^{4}$ be the toric surface given by the parametrization
$$(t_{1}t_{2}^{3},t_{1}^{2}t_{2}^{2},t_{1}^{3}t_{2}^{2},t_{1}t_{2}^{7}).$$
The ideal $\mathfrak{a}$ defining $V$ is generated by three binomials
\begin{equation*}
\mathfrak{a}=\langle
x_{2}x_{4}-x_{1}^{3},
x_{3}^{4}x_{4}-x_{1}x_{2}^{6},
x_{1}^{2}x_{3}^{4}-x_{2}^{7}
\rangle.
\end{equation*}
The triple $(M^{+},M^{-},u)$ is
\begin{equation*}
M^{+}=
\left(
\begin{array}{*{4}{r}}
0 & 1 & 0 & 1\\
0 & 0 & 4 & 1\\
2 & 0 & 4 & 0
\end{array}
\right),
\qquad
M^{-}=
\left(
\begin{array}{*{4}{r}}
3 & 0 & 0 & 0\\
1 & 6 & 0 & 0\\
0 & 7 & 0 & 0
\end{array}
\right)
\qquad
u=(1,1,1).
\end{equation*}
The matrix $A$, after deleting superfluous rows, has $19$ rows and we obtain a list of $124$ rays.
The minimum of $\LCT(M^{+},M^{-},u,0)$ is $99/76=\lct(W,\mathfrak{a})$ which is attained at the ray $(4,12,19,0)$.
The function $\LCT^{\ast}(M^{+},M^{-},u,0)$ has minimum equal to $17/12$, which corresponds to the log-canonical threshold of the monomial ideal
\begin{equation*}
\langle
x_{2}x_{4}, x_{1}^{3},
x_{3}^{4}x_{4}, x_{1}x_{2}^{6},
x_{1}^{2}x_{3}^{4}, x_{2}^{7}
\rangle.
\end{equation*}
\end{Example}

\begin{Example} \label{ExDepCoef}
Set $\mathfrak{a}_1$ the ideal in $k[x_1,x_2,x_3,x_4,x_5]$
\begin{multline*}
\mathfrak{a}_1=\left\langle
x_{2}^{2}x_{4}-x_{1}x_{3}x_{4},
x_{2}^{2}x_{5}-x_{1}x_{3}x_{5},
x_{1}^{3}x_{4}-x_{3}^{2}x_{4},
x_{1}^{3}x_{5}-x_{3}^{2}x_{5}
\right\rangle= \\
=\left\langle
x_{2}^{2}-x_{1}x_{3},
x_{1}^{3}-x_{3}^{2}
\right\rangle
\left\langle x_4,x_5\right\rangle.
\end{multline*}
Note that $\left\langle
x_{2}^{2}-x_{1}x_{3},
x_{1}^{3}-x_{3}^{2}
\right\rangle$ is the ideal of the monomial curve $(t^4,t^5,t^6)$ in $\mathbb{R}^3$.

Set $\mathfrak{a}_2$ to be the ideal
\begin{equation*}
\mathfrak{a}_2=\left\langle
x_{2}^{2}x_{4}-x_{1}x_{3}x_{4},
x_{2}^{2}x_{5}+x_{1}x_{3}x_{5},
x_{1}^{3}x_{4}-x_{3}^{2}x_{4},
x_{1}^{3}x_{5}-x_{3}^{2}x_{5}
\right\rangle.
\end{equation*}
Note that the triples associated to $\mathfrak{a}_1$ and $\mathfrak{a}_2$ are $(M^{+},M^{-},u_1)$ and $(M^{+},M^{-},u_2)$ respectively, where
\begin{equation*}
M^{+}=
\left(
\begin{array}{*{5}{r}}
0 & 2 & 0 & 1 & 0\\
0 & 2 & 0 & 0 & 1\\
3 & 0 & 0 & 1 & 0\\
3 & 0 & 0 & 0 & 1
\end{array}
\right),
\qquad
M^{-}=
\left(
\begin{array}{*{5}{r}}
1 & 0 & 1 & 1 & 0\\
1 & 0 & 1 & 0 & 1\\
0 & 0 & 2 & 1 & 0\\
0 & 0 & 2 & 0 & 1
\end{array}
\right),
\qquad
\begin{array}{l}
u_1=(1,1,1,1), \\
u_2=(1,-1,1,1).
\end{array}
\end{equation*}
The fan $\Gamma$, which is the same for both ideals, has 177 rays.
The minimum of the function $\LCT(M^{+},M^{-},u_1,0)$ is $17/12$ which is attained at $(4,5,6,0,0)$.
The minimum of $\LCT(M^{+},M^{-},u_2,0)$ is $3/2$ which is attained at $(4,5,6,0,0)$, it is also attained at $(1,1,1,0,0)$ and at $(2,3,4,0,0)$.

The ideals $\mathfrak{a}_1$ and $\mathfrak{a}_2$ only differ in one coefficient and they have different log canonical threshold:
\begin{equation*}
\lct(W,\mathfrak{a}_1)=\dfrac{17}{12},\qquad
\lct(W,\mathfrak{a}_2)=\dfrac{3}{2}.
\end{equation*}
This is an example that illustrates that the log canonical threshold of a binomial ideal depends not only on the exponents of the monomials, but also on the coefficients.

Set $v=(4,5,6,0,0)$, in what follows we are giving the details of the computations of 
$\LCT(M^{+},M^{-},u_1,0)(4,5,6,0,0)$ and $\LCT(M^{+},M^{-},u_2,0)(4,5,6,0,0)$.
Note that $\alpha=M^{+}v=M^{-}v=(10,10,12,12)$, $\beta=(M^{+}-M^{-})v=(0,0,0,0)$. The permutation $\varepsilon$ is such that $\varepsilon(1)=1$, $\varepsilon(2)=2$, $\varepsilon(3)=3$, $\varepsilon(4)=4$. The rank of $M^{+}-M^{-}$ is $s=2$.

For the ideal $\mathfrak{a}_1$, we have that $\bar{r}=2$, since $\beta_1,\beta_2,\beta_3,\beta_4$ and 
$1,1,1,1$ are compatible. 
The sequence of Remark~\ref{RemDefni} is $n_1=1$, $n_2=3$.

The value $\LCT(M^{+},M^{-},u_1,0)(4,5,6,0,0)$ is the minimum of
\begin{gather*}
\bar{r}=s=2, \\
\dfrac{|v|}{\alpha_1}=\dfrac{15}{10}, \\
\dfrac{|v|+(\alpha_3-\alpha_1)}{\alpha_3}=\dfrac{17}{12}.
\end{gather*}
For the ideal $\mathfrak{a}_2$, we have that $\bar{r}=1$ since $\beta_1,\beta_2$ and $1,-1$ are not compatible (\ref{DefBetaUComp})
The sequence of Remark~\ref{RemDefni} is now $n_1=1$, $n_2=2$.

The value $\LCT(M^{+},M^{-},u_2,0)(4,5,6,0,0)$ is the minimum of
\begin{gather*}
\dfrac{|v|}{\alpha_1}=\dfrac{15}{10}, \\
\dfrac{|v|+(\alpha_2-\alpha_1)}{\alpha_2}=\dfrac{15}{10}.
\end{gather*}
\end{Example}

\begin{Example}
Example~\ref{ExDepCoef} may be generalized to the ideal
\begin{equation*}
\mathfrak{a}_3=\left\langle
x_{2}^{2}x_{4}-u_1x_{1}x_{3}x_{4},
x_{2}^{2}x_{5}-u_3x_{1}x_{3}x_{5},
x_{1}^{3}x_{4}-u_2x_{3}^{2}x_{4},
x_{1}^{3}x_{5}-u_4x_{3}^{2}x_{5}
\right\rangle,
\end{equation*}
where $u_1,u_2,u_3,u_4\in k^{\ast}$.
In this case we have that
\begin{equation*}
\lct(\mathbb{A}^5,\mathfrak{a}_3)=
\left\{
\begin{array}{lll}
3/2 & \text{if} & u_1\neq u_2 \\ 
17/12 & \text{if} & u_1=u_2
\end{array} 
\right.
\end{equation*}

\end{Example}

\end{document}